\numberwithin{equation}{section}
\newtheorem {theorem}{Theorem}[section]
\newtheorem {proposition}[theorem]{Proposition}
\newtheorem {lemma}[theorem]{Lemma}
\newtheorem {remark}[theorem]{Remark}
\newtheorem {corollary}[theorem]{Corollary}
\newcommand{\N}{\mathbb{N}}
\newcommand{\R}{\mathbb{R}}
\def\ba{\begin{array}}
\def\ea{\end{array}}
\def\bea{\begin{eqnarray} \label}
\def\eea{\end{eqnarray}}
\def\be{\begin{equation} \label}
\def\ee{\end{equation}}
\def\bit{\begin{itemize}}
\def\eit{\end{itemize}}
\def\ben{\begin{enumerate}}
\def\een{\end{enumerate}}
\def\BB{\mathbb{B}}
\def\EE{\mathbb{E}}
\def\NN{\mathbb{N}}
\def\PP{\mathbb{P}}
\def\RR{\mathbb{R}}
\def\SS{\mathbb{S}}
\def\a{\alpha}
\def\b{\beta}
\def\g{\gamma}
\def\d{\delta}
\def\e{\varepsilon}
\def\k{\kappa}
\def\l{\lambda}
\def\r{\varrho}
\def\o{\omega}
\def\G{\Gamma}
\def\cF{\mathcal{F}}
\def\cH{\mathcal{H}}
\def\cI{\mathcal{I}}
\def\dint{\textup{d}}
\def\tW{\widetilde{V}}
\def\var{{\textup{var}}}
\begin{document}

\title{\bfseries Poisson polyhedra in high dimensions}

\author{Julia H\"orrmann\footnotemark[1], Daniel Hug\footnotemark[2], Matthias Reitzner\footnotemark[3] and Christoph Th\"ale\footnotemark[4]}

\date{\today}
\renewcommand{\thefootnote}{\fnsymbol{footnote}}

\footnotetext[1]{Karlsruhe Institute of Technology, Department of Mathematics, Institute of Stochastics, D-76128 Karlsruhe, Germany. E-mail: julia.hoerrmann@kit.edu}

\footnotetext[2]{Karlsruhe Institute of Technology, Department of Mathematics, Institute of Stochastics, D-76128 Karlsruhe, Germany. E-mail: daniel.hug@kit.edu}

\footnotetext[3]{Osnabr\"uck University, Institute of Mathematics, Albrechtstra\ss e 28a, D-49076 Osnabr\"uck, Germany. E-mail: matthias.reitzner@uos.de}

\footnotetext[4]{Ruhr University Bochum, Faculty of Mathematics, NA 3/68, D-44780 Bochum, Germany. E-mail: christoph.thaele@rub.de}

\maketitle

\begin{abstract}
The zero cell of a parametric class of random hyperplane tessellations depending on a distance exponent and an intensity parameter is investigated, as the space dimension tends to infinity. The model includes the zero cell of stationary and isotropic Poisson hyperplane tessellations as well as the typical cell of a stationary Poisson Voronoi tessellation as special cases. It is shown that asymptotically in the space dimension, with overwhelming probability these cells satisfy the hyperplane conjecture, if the distance exponent and the intensity parameter are suitably chosen dimension-dependent functions. Also the high dimensional limits of the mean number of faces are explored and the asymptotic behaviour of an isoperimetric ratio is analysed. In the background are new identities linking the $f$-vector of the zero cell to certain dual intrinsic volumes.
\bigskip
\\
{\bf Keywords}. {Dual intrinsic volume, $f$-vector, high dimensional polyhedra, hyperplane conjecture, hyperplane tessellation, isoperimetric ratio, random polyhedron, Poisson Voronoi tessellation, zero cell.}\\
{\bf MSC}. Primary  52A22, 52A23, 52B05; Secondary 60D05, 52A39, 52C45.
%52B60  	Isoperimetric problems for polytopes
%52B05  	Combinatorial properties
%52A22  	Random convex sets and integral geometry
%52A23  	Asymptotic theory of convex bodies
%52A39  	Mixed volumes and related topics
%52C45  	Combinatorial complexity of geometric structures
%60D05  	Geometric probability and stochastic geometry
\end{abstract}

\section{Introduction}

Over the past two decades, the theory of random polytopes and random polyhedra has advanced significantly. This development has been driven by new geometric and probabilistic techniques for establishing asymptotic results for random polytopes, but also by various connections and applications to other branches of mathematics such as optimization \cite{Borgwardt}, convex geometric analysis \cite{KK}, extreme value theory \cite{LaoMayer,CC,Chenavier}, multivariate statistics \cite{Cascos}, random matrices \cite{LPRT}, and algorithmic geometry \cite{Chazelle,Preparata}. We refer to the survey articles \cite{HugSurvey} and \cite{ReitznerSurvey} for more background material and further references.

Random constructions and basic probabilistic reasoning often provide the existence of an object with desirable properties
which is not accessible via a purely deterministic approach. A notoriously difficult problem, to which this fundamental observation may well apply, is the hyperplane conjecture or slicing problem. In one of several equivalent formulations, it asserts the existence of a universal constant $c>0$ such that for any space dimension $n$ and for any convex body $K\subset\RR^n$ of volume one, there is a hyperplane $L$ in $\RR^n$ such that the intersection $K\cap L$ has $(n-1)$-dimensional volume at least $c$. This problem has inspired a very fruitful line of research which has been initiated by Bourgain in \cite{Bourgain1} and since then has  become one of the major open problems in geometric and functional analysis and in asymptotic convex geometry. The best lower bound for $c$ known up to date is decreasing with the space dimension $n$ and is of the order $n^{-1/4}$, due to a result of Klartag \cite{Klartag1}. For further background material we refer to works of Ball \cite{Ball1}, Junge \cite{Junge1}, Klartag and Kozma \cite{KK}, E.\ Milman \cite{MilmanEmanuelSlicing} or V.\ Milman and Pajor \cite{Milman1}, to the recent monograph \cite{Brazitikos}, as well as to the references cited therein.

The hyperplane conjecture is known to be true for special classes of convex bodies, like zonoids or dual zonoids \cite{Ball1}, unconditional convex bodies \cite{Bourgain1,Milman1} or unit balls of Schatten norms \cite{KoenigMayerPajor}, to name just a few. However, despite considerable effort over a period of now nearly 30 years, a general proof is still missing. Instead, Klartag's bound
 may even be a natural threshold.  These insights have recently led to contributions which investigate possible counterexamples, with a special focus on randomly generated polytopes (see \cite{AlonsoGutierrez.2008,DafnisGiannopGuedon,KK}). It is also worth mentioning that there are  re-formulations of the hyperplane conjecture in terms of random polytopes.
A prominent example is related to Sylvester's problem on the expected volume $\EE V_n(K,n+1)$ of a random simplex in an $n$-dimensional convex body $K$. It is known that the hyperplane conjecture would follow from the inequality $\sup_K \EE V_n(K,n+1)\leq \EE V_n(\Delta_n,n+1)$,
where $\Delta_n$ is a unit-volume simplex in $\RR^n$ and where the supremum is extended over all convex bodies $K\subset\RR^n$ with volume one (compare with \cite{Brazitikos} or with the appendix of \cite{Rademacher}).

\medspace

The major object of investigation in the present paper is a parametric class of random polyhedra. For an arbitrary space dimension $n \geq 2$, we will define an isotropic Poisson hyperplane process in $\RR^n$ which depends on a distance exponent $r>0$ and an intensity $\gamma$. This hyperplane process gives rise to a random hyperplane tessellation and thus to a system of random polyhedra, which are the cells of the  tessellation. In the focus of our attention is the cell containing the origin, the zero cell  of the tessellation, which is denoted by $Z_0$.  If for instance $r=n$, then the zero cell is equal in distribution to the typical cell of a classical Poisson-Voronoi tessellation \cite[Chapter 10.2]{SW}. In the following, we consider the normalized zero cell, which is a re-scaled version of $Z_0$ of unit volume. A special case of one of our main results (see Theorem \ref{thm:HyperplaneConjecture}) is the following theorem.

\begin{theorem}\label{th:1}
Assume that $r=b\,n^\alpha$ for some $b>0$ and $\alpha>1/2$. Then, as the space dimension $n$ tends to infinity, the probability that the hyperplane conjecture holds for the normalized zero cell tends to one.
\end{theorem}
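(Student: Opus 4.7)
By the isotropy of the underlying hyperplane process, it suffices to fix one hyperplane through the origin, say $L_0:=e_n^\perp$, and to prove that
$$
\PP\bigl(|Z_0\cap L_0|\ \ge\ c\,|Z_0|^{(n-1)/n}\bigr)\ \longrightarrow\ 1
$$
for some constant $c>0$ independent of $n$; by the scaling identity $|K\cap L|=|K|^{(n-1)/n}\cdot|(|K|^{-1/n}K)\cap L|$, this is exactly the hyperplane conjecture for the unit-volume rescaling of $Z_0$. The strategy is to compute numerator and denominator in expectation, show that their ratio converges to a positive constant, and then concentrate each of them around its mean.

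After a convenient normalisation of $\gamma$ (harmless since the normalised zero cell is scale invariant), a direct computation with the Poisson law of the hyperplane process gives $\PP(x\in Z_0)=\exp(-\|x\|^r)$ for every $x\in\RR^n$. Fubini in polar coordinates then yields
$$
\EE|Z_0|\ =\ \frac{\omega_n}{r}\,\Gamma\!\left(\frac{n}{r}\right),\qquad \EE|Z_0\cap L_0|\ =\ \frac{\omega_{n-1}}{r}\,\Gamma\!\left(\frac{n-1}{r}\right),
$$
where $\omega_k=2\pi^{k/2}/\Gamma(k/2)$ denotes the $(k-1)$-dimensional Hausdorff measure of $S^{k-1}$. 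Using $\omega_{n-1}/\omega_n^{(n-1)/n}\to\sqrt e$ (by Stirling), together with $r^{-1/n}\to 1$ and $\Gamma((n-1)/r)/\Gamma(n/r)^{(n-1)/n}\to 1$ (valid for every $\alpha>0$ since $r$ grows polynomially in $n$), one obtains
$$
\EE|Z_0\cap L_0|\,/\,(\EE|Z_0|)^{(n-1)/n}\ \longrightarrow\ \sqrt e,
$$
the same limit value as for the Euclidean ball. Hence the hyperplane conjecture is already satisfied \emph{in expectation} with a comfortable margin, and the task reduces to concentration.

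To control both $|Z_0|$ and $|Z_0\cap L_0|$ around their means I would invoke the Poincar\'e inequality for Poisson functionals, $\Var F(\eta)\le \EE\!\int(D_HF)^2\,\mu(\mathrm d H)$, where $\mu$ is the intensity measure of the hyperplane process $\eta$ and $D_H F(\eta)=F(\eta+\delta_H)-F(\eta)$ is the add-one-point difference. For $F=|Z_0|$, the quantity $D_HF$ equals minus the volume chopped off by inserting the new hyperplane $H$, which is controllable through the radial function of $Z_0$; the same analysis applies to $F=|Z_0\cap L_0|$ after viewing it as the volume of the $(n-1)$-dimensional zero cell of the Poisson hyperplane process induced on $L_0$. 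Careful evaluation of the resulting integrals against the parametric intensity $\propto s^{r-1}\,\mathrm d s$ should yield variance-to-squared-mean bounds of order $O(n/r^2)=O(b^{-2}n^{1-2\alpha})$, and the analogous bound for the section; both vanish precisely under the hypothesis $\alpha>1/2$, whereupon Chebyshev's inequality delivers the theorem. The main technical obstacle is exactly this variance estimate: the radial function values $\rho_{Z_0}(u)$ are strongly correlated across directions through the common set of cutting hyperplanes, so obtaining the sharp rate $O(n/r^2)$ (rather than a naive $O(n/r)$, which would only yield the weaker conclusion $\alpha>1$) requires separating carefully the contribution of hyperplanes close to the origin, which determine the typical radial function, from those far away, whose variance cost has to be tracked through a fine analysis of the $s^{r-1}$ weighting and of the corresponding spherical caps of influence.
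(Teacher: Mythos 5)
Your set-up and your expectation computations are sound, and they reproduce what the paper does: with a suitable normalisation, $\PP(x\in Z_0)=e^{-\|x\|^r}$, the mean volume and mean central section are exactly the Gamma-integrals you write, and Stirling gives $\EE V_{n-1}(Z_0\cap L_0)/(\EE V_n(Z_0))^{(n-1)/n}\to\sqrt{e}$ for every $\alpha>0$; reducing the claim to concentration of $V_n(Z_0)$ and $V_{n-1}(Z_0\cap L_0)$ around their means and finishing with Chebyshev is precisely the paper's strategy (Corollary \ref{cor:AsymptotikVolumeSection} and Theorem \ref{thm:HyperplaneConjecture}).

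The genuine gap is the concentration step itself, which is where the hypothesis $\alpha>1/2$ enters and which you only sketch. You assert that a Poincar\'e inequality for Poisson functionals ``should yield'' a relative variance of order $O(n/r^2)$, but you neither carry out the add-one-cost computation nor give any argument that the resulting integral of squared chopped-off volumes is that small; indeed you yourself identify the strong correlations of the radial function and the need to separate near and far hyperplanes as the main obstacle, and you leave it unresolved. Since the whole theorem (and in particular the threshold $\alpha>1/2$ as opposed to the easy regime $\alpha>1$) hinges on this estimate, the proof is incomplete at its decisive point. For comparison, the paper does not prove the variance bound by a Poincar\'e-type argument at all: it imports the two-sided bounds of Theorem 2 in \cite{HoHu} (restated here as Proposition \ref{prop:BoundsVarianceSections}, with the constants $D(n,m,r)$ and $E(m,r)$ of \eqref{AuxiliaryConstantD(n,m,r)} and \eqref{BoundForE}) and then performs a careful Stirling analysis showing that, with the normalising intensity \eqref{eq:GammaHatAN}, the variances decay like $(2^{-b/2}+\epsilon)^{n^\alpha}$ for $1/2<\alpha<1$ (and exponentially in $n$ for $\alpha\geq1$), i.e.\ tend to zero exactly when $\alpha>1/2$ or $\alpha=1/2$, $b>\sqrt{8}$. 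Note also that the established decay is exponential in $n^{\alpha}$, not the polynomial rate $n^{1-2\alpha}$ you conjecture; your guessed rate would suffice if true, but nothing in your sketch (nor in the cited general Poincar\'e inequality) substantiates it, so as it stands the argument does not prove the theorem without invoking the variance results of \cite{HoHu} or an equivalent substitute.
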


Theorem \ref{th:1} is related to general investigations dealing with the combinatorial structure and the geometry of the zero cells $Z_0$  obtained within the class of random tessellations considered in this paper. The starting point is a set of identities connecting the number of $\ell$-dimensional faces with certain dual intrinsic volumes of $Z_0$. In the special case $r=1$, these identities reduce to a result of Schneider \cite{Schneider09} involving the well-known intrinsic volumes. It is worth mentioning that our identities are very much in the spirit of Efron's identity for random convex hulls \cite{Efron}. They provide a link between the combinatorial structure of $Z_0$ and certain metric quantities of the zero cell. In a next step, bounds for the expected dual intrinsic volumes are established and our identities are then used to obtain bounds for the expected number of faces of the random polyhedra. We also investigate the expected measure of the $\ell$-skeleton. Evaluating these bounds as $n \to \infty$ is the basis for our asymptotic results.

Besides the zero cell $Z_0$ itself, for $r=1$ we also deal with lower-dimensional weighted faces of the tessellation. Alternatively, in our setting these weighted faces can be obtained as intersections of $Z_0$ with a stochastically independent isotropic linear subspace. This interpretation is one of the motivations for studying properties of $Z_0\cap L$, where $L$ is an $m$-dimensional linear subspace of $\RR^n$. Thanks to the special structure of our tessellation model, we are able to show a transfer principle, which allows us to translate results from $Z_0$ to intersections $Z_0\cap L$ of $Z_0$ with a subspace $L$. Combined with a recent result from \cite{HoHu} this yields Theorem \ref{th:1}.

Another aspect our paper deals with, is the question of how close the random polyhedra $Z_0$ are to a Euclidean ball. More specifically, we study  how the isoperimetric ratio of mean surface area and mean volume behaves as the dimension goes to infinity. Roughly speaking, we will see, for example, that typical Poisson-Voronoi cells and their close relatives corresponding to a distance exponent $r=b\,n$ are approximately spherical in the mean, whereas the shape of the zero cell for fixed distance exponents $r$ is degenerate, in this sense. More generally, we will investigate the isoperimetric ratio for general distance exponents of the form $r=b\,n^\a$ with $b>0$ and $\a\in\RR$, see Theorem \ref{thm:IsoperimetricRatio}.

In a last step, we determine the asymptotic behaviour of $\EE f_\ell (Z_0)$, which is the mean number  of $\ell$-dimensional faces of the zero cell $Z_0$, for some fixed  $\ell\in\N_0$, as the space dimension $n$ tends to infinity.  Again, there is a remarkable difference, for example, between the case $r=b\,n$ and that of a constant distance exponent $r$. To highlight this difference, we formulate the result at this point only in the special cases where $r$ is fixed or $r$ is proportional to the dimension, and refer to Theorem \ref{thm:HighDimensionsVertices}, Theorem \ref{thm:HighDimensionRfest} and Theorem \ref{thm:HighDimensionsR=bn} below for extensions in several directions.

\begin{theorem}
Let $Z_0$ be the zero cell of a hyperplane tessellation with distance exponent $r>0$. Let $\ell\in\N_0$ be fixed. If $r$ is fixed, then
$$ \lim\limits_{n\to\infty}\sqrt[n]{\EE f_\ell(Z_0)}
=\frac{\o_{r+1}}{\k_r}\,,$$ where $\o_{r+1}$ and $\k_r$ are constants  given by \eqref{eq:OmegaKappaExakt} below.
If $r=b\,n$ for some fixed $b>0$, then
$$\lim\limits_{n\to\infty}n^{-1/2}\,\sqrt[n]{\EE f_\ell(Z_0)}=\sqrt{2\pi b}\,. $$
\end{theorem}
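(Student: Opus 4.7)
The proof is built on the $f$-vector identities announced in the introduction, which should express $\EE f_\ell(Z_0)$ as a finite combination of expected dual intrinsic volumes of $Z_0$,
\[
\EE f_\ell(Z_0) \;=\; \sum_k c_{n,r,\ell,k}\, \EE\tW_k(Z_0),
\]
with explicit combinatorial coefficients. Since rescaling the intensity of the underlying hyperplane process only rescales $Z_0$ without changing its combinatorial structure, all dependence on $\gamma$ must cancel once this identity is combined with the closed form for $\EE\tW_k(Z_0)$. The strategy is then to evaluate each $\EE\tW_k(Z_0)$ explicitly and to carry out a Stirling analysis of the resulting Gamma functions in the two regimes.

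\emph{Step 1: closed form for $\EE\tW_k(Z_0)$.} By isotropy, $\PP(x\in Z_0)$ depends only on $\|x\|$, and a direct computation using the intensity measure of the hyperplane process with distance exponent $r$ yields
\[
\PP(x\in Z_0)\;=\;\exp\bigl(-\gamma\, C_n(r)\,\|x\|^r\bigr)
\]
for an explicit $C_n(r)$ built from $\Gamma\bigl((r+1)/2\bigr)$ and $\Gamma\bigl((n+r)/2\bigr)$. Substituting this into the radial representation of $\tW_k$ and performing the substitution $u=\gamma C_n(r)\,t^r$ gives
\[
\EE\tW_k(Z_0)\;=\;A_{n,k}\,\bigl(\gamma C_n(r)\bigr)^{-k/r}\,\Gamma(k/r+1),
\]
with explicit geometric $A_{n,k}$. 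Inserted into the $f$-vector identity, the $\gamma$-dependence cancels, and one obtains $\EE f_\ell(Z_0)$ as a closed-form finite sum of Gamma functions. Because this sum has polynomially many terms, its $n$-th root is governed by a single dominant summand, which direct comparison identifies with the one indexed by the largest admissible $k$, of order $n$.

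\emph{Step 2: asymptotics and main difficulty.} Stirling's formula applied to $\kappa_n$, to $\Gamma\bigl((n+r)/2\bigr)$ appearing in $C_n(r)$, and to $\Gamma(k/r+1)$ then yields both claimed limits. For fixed $r$, the leading exponential rate is controlled by ratios of the form $\bigl(\Gamma((r+1)/2)/\Gamma(r/2+1)\bigr)^{1/r}$ together with appropriate powers of $\pi$ coming from $\kappa_n$; regrouping these and invoking the definitions of $\omega_{r+1}$ and $\kappa_r$ produces the quotient $\omega_{r+1}/\kappa_r$. In the regime $r=bn$, the two relevant Gamma-function arguments both scale linearly in $n$ and their leading exponential contributions cancel, so the correct growth arises only from the sub-leading $\sqrt n$ corrections in Stirling. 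Ensuring that these corrections combine to give precisely $\sqrt{2\pi b}$, and that the lower-order summands and the combinatorial prefactors in the $f$-vector identity contribute only polynomially (and so disappear under the $n$-th root), is the main technical hurdle of the proof.
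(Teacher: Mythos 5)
There is a genuine gap, and it sits exactly at the step you label ``Step 1''. The paper's face-number identity (Theorem \ref{thm:ExpectFvector}) does \emph{not} express $\EE f_{n-\ell}(Z_0)$ through dual intrinsic volumes of $Z_0$ itself; it gives $\EE f_{n-\ell}(Z_0)=c_r(n,\ell)\,\gamma^\ell\,\EE\tW^E_{\ell(1-r)}(Z_0|E)$, i.e.\ a dual intrinsic volume of the \emph{projection} $Z_0|E$ onto an $\ell$-dimensional subspace (the projection arises because a face $Z_0\cap H_1\cap\dots\cap H_\ell$ is nonempty iff the intersection point of the $H_i$ with ${\rm span}\{u_1,\dots,u_\ell\}$ lies in the projection of $Z_0$). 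Your closed-form evaluation via $\PP(x\in Z_0)=\exp(-\gamma C_n(r)\|x\|^r)$ (which is Lemma \ref{lem:ThetaStrecke0z}) applies to the radial function of $Z_0$, not of $Z_0|E$; since $\varrho_{Z_0|E}(u)\ge\varrho_{Z_0}(u)$ it yields only a \emph{lower} bound (this is precisely Corollary \ref{cor:EstimateFVector}), and no exact ``finite sum of Gamma functions'' for $\EE f_\ell(Z_0)$ exists by this route. Moreover, even the coefficient $c_r(n,\ell)$ is an unevaluated multiple spherical integral; the paper only bounds it two-sidedly (Proposition \ref{Prop:ConstantEstimate}, via H\"older and a Blaschke--Petkantschin formula), with upper and lower bounds differing by a factor $\ell^{\,n-\ell+1}$ -- harmless for fixed face dimension only because that factor is then polynomial in $n$. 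So the object your Stirling analysis in Step 2 is supposed to act on is not available, and the cancellation-of-$\gamma$ argument, while correct in spirit, cannot substitute for it.

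What is missing on the other side is any upper-bound mechanism. The paper closes the sandwich by using that $Z_0$ is almost surely a simple polytope, so $f_\ell(Z_0)\le\frac{1}{\ell+1}\binom{n}{\ell}f_0(Z_0)$, combined with the two-sided vertex bounds \eqref{fnull}, which in turn rest on the exact vertex formula (Corollary \ref{VerticeNumberFormula}) and the constant estimates above; the binomial and polynomial prefactors then vanish under the $n$-th root, and Stirling applied to $\omega_{r+1}/\kappa_r=2\sqrt{\pi}\,\Gamma(\frac{r+2}{2})/\Gamma(\frac{r+1}{2})$ produces the two limits (the $\sqrt{2\pi b}\,\sqrt n$ growth for $r=bn$ comes directly from this ratio, not from a cancellation of leading Stirling terms between two Gamma functions as you suggest). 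To repair your proposal you would need to replace the claimed exact formula by matching exponential-order upper and lower bounds: the lower one you essentially have (radial function of $Z_0$ plus the identity), the upper one requires the simplicity argument and control of the spherical-integral constants, neither of which appears in your plan.
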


The paper is structured as follows. After setting up our framework together with some background material in Section \ref{sec:Preliminaries}, our basic identities are presented in Section \ref{subsec:GeneralResults}. Special formulae for $r=1$ are contained in Section \ref{subsec:SpecialCaser=1}, whereas Section \ref{subsec:PlanarSections} focuses on  sections with subspaces and the transfer principle. The hyperplane conjecture for $Z_0$ is discussed in Section \ref{subsec:asymptotikundhyperplane}, while Section \ref{subsec:HighDimensions} deals with the asymptotic behaviour of the isoperimetric ratio and with the  combinatorial structure
of the zero cell in high dimensions. The detailed proofs of our results are provided in the final Section \ref{sec:Proofs}.

\section{Preliminaries}\label{sec:Preliminaries}

\paragraph{Basic notation.}
In this paper we work in the Euclidean vector space $\RR^n$, $n\geq 2$, whose standard scalar product and induced norm will be denoted by $\langle\,\cdot\,,\,\cdot\,\rangle$ and $\|\,\cdot\,\|$, respectively. The unit sphere $\SS^{n-1}$ and the unit ball $\BB^n$ in $\RR^n$ are given by $\SS^{n-1}=\{x\in\RR^n:\|x\|=1\}$ and $\BB^n=\{x\in\RR^n:\|x\|\leq 1\}$. We write $\{e_1,\ldots,e_n\}$ for the standard basis of $\RR^n$. For a subspace $U$ of $\RR^n$ we denote by $U^\bot$ its orthogonal complement.

 Let $G(n,\ell)$ be the space of $\ell$-dimensional linear subspaces of $\RR^n$, which is equipped with the standard topology and the unique Haar probability measure $\nu_\ell$. For a subspace $L\in G(n,\ell)$ we write $\SS_L$ and $\BB_L$ for the unit sphere and the unit ball in $L$, respectively. Let us further denote by $\cH^s$, $s\geq 0$, the $s$-dimensional Hausdorff measure. By $A(n,\ell)$ we mean the space of $\ell$-dimensional affine subspaces of $\RR^n$, together with the canonical topology. On $A(n,\ell)$ we have the translation invariant measure $\mu_\ell$ defined by the relation
\begin{equation}\label{eq:DefinitionMuEll}
\int\limits_{A(n,\ell)}h(E)\,\mu_\ell(\dint E)=\int\limits_{G(n,\ell)}\int\limits_{L^\perp}h(L+x)\,\cH^{n-\ell}(\dint x)\,\nu_\ell(\dint L)\,,
\end{equation}
where $h\geq 0$ is a measurable function on $A(n,\ell)$ (in a topological space, measurability in this paper always refers to the
 Borel $\sigma$-field). If $L\in G(n,\ell)$, for some $\ell\in\{0,1,\ldots,n-1\}$, we write $\k_\ell=\cH^\ell(\BB_L)$ for the $\ell$-volume of $\BB_L$, and we write $\o_\ell=\cH^{\ell-1}(\SS_L)$ for the $(\ell-1)$-volume of $\SS_L$ if $\ell \in\{1,\ldots,n-1\}$. It is well known that
\begin{equation}\label{eq:OmegaKappaExakt}
\o_\ell=\frac{2\pi^{\frac \ell 2}}{\Gamma\big({\frac \ell 2}\big)}
\qquad\text{and}\qquad
\k_\ell=\frac {\pi^{\frac \ell 2} }{ \Gamma\big({\frac \ell 2}+1\big)}\,,
\end{equation}
where $\Gamma(\,\cdot\,)$ denotes the gamma function.
 We will keep the notation $\o_r$ and $\k_r$ as shorthand for \eqref{eq:OmegaKappaExakt} also for real-valued parameters $r\geq 0$. We repeatedly use the relation
\begin{equation}\label{eqGamma}
\int\limits_0^{\frac{\pi}{2}}(\sin\varphi)^\alpha (\cos\varphi)^\beta d\varphi = \frac{\o_{\alpha+\beta+2}}{\o_{\a+1}\o_{\b+1}},
\end{equation}
for $\alpha,\beta>-1$; see \cite[Equation (5.6)]{Artin1} or \cite[Equation (12.42)]{WW}.
 For $u_1,\ldots,u_m\in\RR^n$, we define $\nabla_m(u_1,\ldots,u_m)$ as the $m$-volume of the parallelepiped spanned by the vectors $u_1,\ldots,u_m$. Moreover, for a set $K\subset\RR^n$ and a subspace $L\in G(n,\ell)$, we write $K|L$ for the orthogonal projection of $K$ to $L$.
For a polytope $P$ and $j\in\{0,\ldots,n\}$ we denote by $\cF_j(P)$ the set of $j$-dimensional faces of $P$. For $x\in\RR$ the positive part of $x$ is  $x_+=\max\{x,0\}$.

\paragraph{Intrinsic volumes.}
For a convex body (a non-empty, compact and convex subset) $K\subset\RR^n$ and arbitrary $s>0$ we consider the volume $\cH^n(K_s)$ of the $s$-parallel set $K_s=\{x\in\RR^n:d(x,K)\leq s\}$ of $K$, where $d(x,K)=\inf\{\|x-y\|:y\in K\}$ is the distance between $x$ and $K$. According to Steiner's formula \cite[Equation (4.1)]{Schneider14}, $\cH^n(K_s)$ is a polynomial in $s$ of degree $n$. Thus, there are constants $V_0(K),\ldots,V_n(K)$, the  intrinsic volumes of $K$, such that
$$\cH^n(K_s)=\sum_{j=0}^n s^{n-j}\k_{n-j}V_j(K)\,,\quad s\geq 0\,.$$
In particular, if $0<\cH^n(K)<\infty$, then $V_n(K)=\cH^n(K)$, $V_{n-1}(K)$ is half of the $(n-1)$-dimensional surface area of $K$ and $V_0(K)= 1$. It is a particular feature of the normalization of the intrinsic volumes that they do not depend on the dimension of the surrounding space. In other words, if a convex body $K\subset\RR^n$ is contained in some lower-dimensional subspace $L\in G(n,m)$ of dimension $m\in\{1,\ldots,n-1\}$, then $V_j(K)=0$ for $j>m$ and for $j\leq m$, $V_j(K)$ evaluated in $\RR^n$ yields the same result as $V_j(K)$ evaluated within $L$.

\paragraph{Dual intrinsic volumes.}
Let $s\in\RR$ and let $K\subset\RR^n$ be a convex body with $0\in K$ and radial function $\r_K(u)=\max\{\l:\l u\in K\}$ in direction $u\in\SS^{n-1}$. We call
$$\tW_s(K)={\frac 1 n}\int\limits_{\SS^{n-1}}\r_K(u)^{n-s}\,\cH^{n-1}(\dint u)$$
the dual intrinsic volume of order $n-s$ of $K$, see \cite{Lutwak75}. If $L\in G(n,\ell)$ and $K\subset L$, we define
$$\tW^L_s(K)={\frac 1 \ell}\int\limits_{\SS_L}\r_K(u)^{\ell-s}\,\cH^{\ell-1}(\dint u)$$
as the dual intrinsic volume of order $\ell-s$ with respect to $L$. We stress the fact that the dual intrinsic volumes \textit{depend} on the dimension of the surrounding space, which means that their values may differ if they are evaluated in subspaces of different dimensions. For this reason, we indicate in our notation $\tW^L_s(K)$ the subspace $L$ in which they are evaluated whenever it differs from $\RR^n$. We finally note that for a convex body $K$ contained in a subspace $L$ of dimension $0<\ell<n$ such that $\cH^\ell(K)>0$ we have
%\begin{equation}\label{eq:DualesFuerIndex=0}
$$
\tW_0^L(K)={\frac 1 \ell}\int\limits_{\SS_L}\varrho_K(u)^\ell\,\cH^{\ell-1}(\dint u)=\cH^\ell(K)=V_\ell(K)\,,
$$
%\end{equation}
which connects the dual intrinsic volume of order zero and the ordinary intrinsic volumes.

Dual intrinsic volumes turned out to be a crucial and unifying concept for the investigation of intersection bodies and the solution of the Busemann-Petty problem, see \cite{Lutwak88, BusemannPettyGKS} as well as the references cited therein. For a connection between the dual intrinsic volumes and the hyperplane conjecture we refer to \cite{MilmanEmanuelSlicing}.

\paragraph{Poisson hyperplane processes and their zero cell.}
For fixed $r>0$ and $\g>0$, we define the measure $\Theta$ on $A(n,n-1)$ by
\begin{equation}\label{def:Theta}
\Theta(\,\cdot\,) = \frac{\g}{\o_n}\int\limits_{\SS^{n-1}}\int\limits_{\RR} \mathbf{1}\{H(u,t)\in\,\cdot\,\}\,|t|^{r-1}\,\dint t\,\cH^{n-1}(\dint u)\,,
\end{equation}
where
\[H(u,t)=\{x\in\RR^n:\langle x,u\rangle =t\}, \quad u\in \SS^{n-1},t\in \RR.\]
The measure $\Theta$ is rotation invariant for any value of the parameter $r>0$, which is called distance exponent. Moreover, $\Theta$ is translation invariant if and only if $r=1$. We call $\gamma$ the intensity (parameter) associated with $\Theta$ (clearly, $r$ and $\gamma$ are uniquely determined by $\Theta$).

In this paper, we consider Poisson hyperplane processes $X$ in $\mathbb{R}^n$, defined on an underlying probability space $(\Omega,\mathcal{A},\mathbb{P})$, whose intensity measures $\Theta=\mathbb{E} X$ are given by \eqref{def:Theta}, for some distance exponent $r$ and some intensity $\gamma$. Thus, $X$ has the property that for a measurable set $A\subset\SS^{n-1}\times[0,\infty)$ the number of parametrized hyperplanes from $X$ falling in $A$ is Poisson distributed with mean $\Theta(A)$ (usually, one also requires a certain independence property for $X$, which in our situation is automatically fulfilled, see Corollary 3.2.2 in \cite{SW}). The hyperplane process $X$ can be written as $X=\sum_{i\geq 1}\d_{H_i}$, where $\d_{H_i}$ is the unit mass Dirac measure concentrated at $H_i$ and where the random hyperplanes $H_i$, $i\geq 1$, are pairwise distinct. For each realization of $X$, the hyperplanes $H_i$, $i\in\N$,  partition $\RR^n$ into a countable collection of random convex polyhedra, which are called cells in the following. For $H\in X$, let $H^-$ be the closed half-space determined by $H$ which contains the
origin. The random polyhedron
$$Z_0=\bigcap_{H\in X}H^-$$
is the almost surely uniquely determined cell that contains the origin. It is called the zero cell of $X$. The distribution of $Z_0$ is
invariant under rotations and $Z_0$ is almost surely bounded. Hence, $Z_0$ is an isotropic random compact set. Since the hyperplane process
is locally finite, $Z_0$ is indeed almost surely a random polytope (a bounded random polyhedron).

Some special cases of our model are worth to be mentioned. If the distance exponent $r$ equals the space dimension $n$, then $Z_0$ has the same distribution as the typical cell of a translation invariant Poisson-Voronoi tessellation (of suitable intensity), see \cite{HugSchneider}. Moreover, if $r=1$, then $Z_0$ is equal in distribution to the zero cell of a translation and rotation invariant Poisson hyperplane tessellation. These two models are well known and have extensively been studied in the literature, see \cite{SW} and the references cited therein. In this sense, the zero cells $Z_0$  of the Poisson hyperplane processes $X$ from the parametric class we consider interpolate between the typical cell of a Poisson-Voronoi tessellation and the zero cell of a Poisson hyperplane tessellation, see \cite{HoHu}.

\section{Statement of the results}

\subsection{Faces, skeletons and dual intrinsic volumes of the zero cells}\label{subsec:GeneralResults}

A first motivation for our analysis was Efron's identity for the convex hull of random points. The study of convex hulls of uniformly distributed random points placed in a convex domain goes back to the early days of geometric probabilities. In 1864, Sylvester has asked for the probability that the convex hull of four random points in the plane is a triangle. The systematic investigation of random polytopes began with the works \cite{Efron} and \cite{RenyiSulanke} of Efron, and R\'enyi and Sulanke.  For modern developments we refer the interested reader to the survey articles \cite{Barany, HugSurvey,ReitznerSurvey}  and to \cite[Chapter 8.2]{SW}.

Efron's identity connects the number of vertices to the volume of a random polytope. Let $K\subset\RR^n$ be a convex set with unit volume, and let $\eta=\sum_{i\geq 1}\d_{x_i}$ be the restriction to $K$ of a homogeneous Poisson point process in $\RR^n$ of intensity $\l>0$. The convex hull $K_\l={\rm conv}(\eta)$ of the points of $\eta$ is a random polytope contained in $K$. We denote by $\EE f_0(K_\l)$ the mean number of vertices of $K_\l$ and by $\EE V_n(K_\l)$ its mean volume. Since a point $x\in\eta$ is a vertex of $K_\l$ if and only if it is not contained in the convex hull of the other points, we get
$$\EE f_0(K_\l) = \EE\sum_{x\in\eta}{\bf 1}\big\{x\notin{\rm conv}(\eta-\d_x)\big\}\,.$$
Applying Mecke's identity for Poisson point process (see \cite[Theorem 3.2.5]{SW} and also \eqref{eq:Mecke} below), we  conclude that
\begin{equation}\label{eq:Efron}
\begin{split}
\EE f_0(K_\l) %&=\lambda\EE\int_K{\bf 1}\big(x\notin{\rm conv}((\eta+\d_x)-\d_x)\big)\,\dint x\\
&=\lambda\,\EE\int\limits_K{\bf 1}\big\{x\notin K_\l\big\}\,\dint x=\l\,\big(1-\EE V_n(K_\l)\big)\,.
\end{split}
\end{equation}
This is the Poissonian analogue of Efron's identity from \cite{Efron} for random convex hulls, which originally deals with a fixed number of random points in $K$.

In the present paper, we explore a setting which is dual to the one described above. First, our random polyhedra are not generated by a collection of random points, but by a random collection of hyperplanes, which form a tessellation of $\RR^n$. Secondly, instead of taking the convex hull we are interested in the zero cell $Z_0$, which arises as an intersection of random half-spaces.

Recently, Schneider \cite{Schneider09} has obtained an Efron-type identity for the zero cell $Z_0$ induced by a translation and rotation invariant Poisson hyperplane process of intensity $\g>0$. For this random polyhedron he proved that
\begin{equation}\label{eq:SchneidersFormel}
\EE f_0(Z_0)=\g^n\,\left(\frac{\k_{n-1}}{ \o_n}\right)^n\,\k_n\,\EE V_n(Z_0)\,.
\end{equation}
In fact, this is a special case of a set of identities obtained in \cite{Schneider09} (see also \eqref{eq:AllgemeineSchneiderformel} below) for stationary but possibly anisotropic hyperplane tessellations and which should be compared with Efron's identity \eqref{eq:Efron}. In analogy to \eqref{eq:Efron}, it relates the combinatorial quantity $\EE f_0(Z_0)$ to the mean volume $\EE V_n(Z_0)$ of $Z_0$.

\medspace

The first part of this paper deals with a generalization of this result for the number of $\ell$-dimensional faces of the zero cell of a Poisson hyperplane tessellation with distance exponent $r >0$ and intensity $\gamma$.
For $0\leq\ell\leq n-1$ we denote by $f_\ell(Z_0)$ the number of $\ell$-dimensional faces of $Z_0$ and by
$${f}(Z_0)=\big(f_0(Z_0),\ldots,f_{n-1}(Z_0)\big)$$ the $f$-vector of $Z_0$. Our first main result relates the combinatorial
quantity ${f}(Z_0)$ to certain metric parameters of $Z_0$. This generalizes the identities from the translation invariant case (\ref{eq:SchneidersFormel}) to our general model. In particular, Theorem \ref{thm:ExpectFvector} includes Efron-type identities for typical cells of Poisson-Voronoi tessellations, as discussed in the introduction.

\begin{theorem}\label{thm:ExpectFvector}
Let $Z_0$ be the zero cell of a Poisson hyperplane process with intensity measure $\Theta$ as in \eqref{def:Theta}, and let $\ell\in\{1,\ldots,n\}$. Then
\begin{equation}\label{eq:MainIdentity}
\EE f_{n-\ell}(Z_0)=c_r(n,\ell)\,\g^\ell\,\EE\tW^{E}_{\ell(1-r)}(Z_0|E)\,,
\end{equation}
where $E={\rm span}\{e_1,\ldots,e_\ell\}$ and where the constant $c_r(n,\ell)$ is given by
\begin{equation*}
\begin{split}
c_r(n,\ell)={\frac {1}{r\, \ell!}}\ & \omega_n^{-\ell}\, \frac {\o_{n-\ell+1}\cdots\o_n}{\o_1\cdots\o_\ell}\\ &\times\,\int\limits_{\left(\SS^{\ell-1}\right)^\ell}\nabla_\ell(u_1,\ldots,u_\ell)^{n-\ell+1}\,\prod_{j=1}^\ell| \langle u_j,e_l\rangle|^{r-1}\,\cH^{\ell(\ell-1)}\big(\dint(u_1,\ldots,u_\ell)\big)\,.
\end{split}
\end{equation*}
\end{theorem}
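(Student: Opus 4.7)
The plan is to apply the multivariate Mecke formula to the Poisson hyperplane process and then reduce the resulting integral to a dual intrinsic volume of the projection $Z_0|E$ via a spherical Blaschke-Petkantschin decomposition. Almost surely each $(n-\ell)$-face of $Z_0$ arises as $H_1\cap\cdots\cap H_\ell\cap Z_0$ for a uniquely determined unordered $\ell$-tuple of hyperplanes from $X$, so
\begin{equation*}
f_{n-\ell}(Z_0)=\frac{1}{\ell!}\sum_{(H_1,\ldots,H_\ell)\in X^\ell_{\neq}}\mathbf{1}\{H_1\cap\cdots\cap H_\ell\cap Z_0\in\cF_{n-\ell}(Z_0)\}.
\end{equation*}
Taking expectations and applying the multivariate Mecke formula, together with Slivnyak's theorem (deleting the $\ell$ prescribed atoms leaves a Poisson process of the same law, so the resulting zero cell $Z_0^{(\ell)}$ has the same distribution as $Z_0$), converts this into the $\ell$-fold integral
\begin{equation*}
\EE f_{n-\ell}(Z_0)=\frac{1}{\ell!}\int_{A(n,n-1)^\ell}\PP[H_1\cap\cdots\cap H_\ell\cap Z_0^{(\ell)}\in\cF_{n-\ell}(Z_0)]\,\Theta^\ell(\dint(H_1,\ldots,H_\ell)).
\end{equation*}

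Parameterizing $H_i=H(u_i,t_i)$ with $(u_i,t_i)\in\SS^{n-1}\times\RR$, the flat $L=H_1\cap\cdots\cap H_\ell$ is a.s.\ $(n-\ell)$-dimensional with direction space $E^\perp$, where $E:=\operatorname{span}(u_1,\ldots,u_\ell)\in G(n,\ell)$, and its orthogonal projection onto $E$ is the single point $p\in E$ determined by $\langle p,u_i\rangle=t_i$. Using the standard fact that the orthogonal projection of the interior of a full-dimensional convex body equals the relative interior of its projection, the face event reduces, up to a $\PP$-null set, to $p\in\operatorname{int}_E(Z_0|E)$. The substitution $(t_1,\ldots,t_\ell)\leftrightarrow p$ given by this linear system has Jacobian $\nabla_\ell(u_1,\ldots,u_\ell)$ and transforms $\prod_i|t_i|^{r-1}$ into $\prod_i|\langle p,u_i\rangle|^{r-1}$.

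I then invoke the spherical Blaschke-Petkantschin formula
\begin{equation*}
\int_{(\SS^{n-1})^\ell}g\,\cH^{(n-1)\ell}(\dint u)=\frac{\o_{n-\ell+1}\cdots\o_n}{\o_1\cdots\o_\ell}\int_{G(n,\ell)}\int_{(\SS_E)^\ell}g(u)\,\nabla_\ell(u)^{n-\ell}\,\cH^{(\ell-1)\ell}_E(\dint u)\,\nu_\ell(\dint E)
\end{equation*}
to fibre the $u$-integration over $G(n,\ell)$, and then use the rotation invariance of the distribution of $Z_0$ to fix $E=\operatorname{span}(e_1,\ldots,e_\ell)$ and drop the outer $\nu_\ell$-integral. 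Passing to polar coordinates $p=s\omega$ on $E$ reduces the radial integral to $\int_0^{\varrho(\omega)}s^{\ell r-1}\,\dint s=\varrho_{Z_0|E}(\omega)^{\ell r}/(\ell r)$; one more rotation invariance argument within $E$ shows that the remaining $u$-integral weighted by $\prod_i|\langle\omega,u_i\rangle|^{r-1}$ does not depend on $\omega$, so $\omega$ may be replaced by $e_\ell$. Finally, the identity
\begin{equation*}
\int_{\SS_E}\varrho_{Z_0|E}(\omega)^{\ell r}\,\cH^{\ell-1}(\dint\omega)=\ell\,\tW^E_{\ell(1-r)}(Z_0|E),
\end{equation*}
read directly from the definition of the dual intrinsic volume, recognizes the target quantity on the right-hand side of \eqref{eq:MainIdentity}.

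The principal obstacle is the careful bookkeeping of normalizing constants -- in particular verifying the spherical Blaschke-Petkantschin constant $\o_{n-\ell+1}\cdots\o_n/(\o_1\cdots\o_\ell)$, and reconciling the exponent $\nabla_\ell^{n-\ell+1}$ (coming from $\nabla_\ell^{n-\ell}$ in Blaschke-Petkantschin combined with the extra $\nabla_\ell$ from the Jacobian of $(t_1,\ldots,t_\ell)\leftrightarrow p$) -- together with the geometric step that the face event does reduce cleanly to the projection condition $p\in\operatorname{int}_E(Z_0|E)$. Once these are in place, assembling the prefactors reproduces precisely the stated constant $c_r(n,\ell)$.
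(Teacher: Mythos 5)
Your proposal is correct and follows essentially the same route as the paper: multivariate Mecke formula, the linear change of variables from $(t_1,\ldots,t_\ell)$ to the intersection point with Jacobian $\nabla_\ell(u_1,\ldots,u_\ell)$, a Blaschke--Petkantschin step contributing $\nabla_\ell^{n-\ell}$ and the constant $\o_{n-\ell+1}\cdots\o_n/(\o_1\cdots\o_\ell)$, rotation invariance to fix $E$ and to make the spherical $u$-integral independent of the radial direction, and polar coordinates yielding $\varrho_{Z_0|E}^{\ell r}/(\ell r)$ and hence $\tW^E_{\ell(1-r)}(Z_0|E)$. The only cosmetic differences are that the paper replaces the face indicator by the a.s.\ equivalent nonempty-intersection indicator before applying Mecke (rather than reducing the face event to $p\in Z_0|E$ afterwards) and reaches the Grassmannian integral via \cite[Theorem 7.2.3]{SW}, which does not affect the argument or the resulting constant.
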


\medskip

\begin{remark}\rm
Although the factor $\gamma^\ell$ appears in \eqref{eq:MainIdentity}, $\EE f_{n-\ell}(Z_0)$ is independent of the scaling parameter $\gamma$. This is due to the fact that on the right-hand side of \eqref{eq:MainIdentity}, the size of $Z_0$ depends on $\gamma$ and $\tW_{\ell(1-r)}^{E}(Z_0|E)$ measures a metric quantity of $Z_0$.
\end{remark}

It is worth considering the case $\ell=n$ separately. It corresponds to the number of vertices of $Z_0$, where identity \eqref{eq:MainIdentity} in Theorem \ref{thm:ExpectFvector} takes a particularly appealing form.

\begin{corollary}\label{VerticeNumberFormula}
The mean number of vertices of $Z_0$ is given by
$$\EE f_0(Z_0)=\frac {\k_n}{ r}\left(\frac{r}{2}\, \frac{\o_{r+1}}{ \o_{r+n}}\right)^n\,c_r(n)$$
with
$$c_r(n)=\int\limits_{\left(\SS^{n-1}\right)^n}\nabla_n(u_1,\ldots,u_n)\,\prod_{j=1}^n
|\langle u_j,e_n\rangle|^{r-1}\,\cH^{n(n-1)}\big(\dint(u_1,\ldots,u_n)\big)\,.$$
\end{corollary}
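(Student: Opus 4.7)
The plan is to specialize Theorem \ref{thm:ExpectFvector} to the case $\ell=n$ and then explicitly compute the single expected dual intrinsic volume that appears on its right hand side, using the Poisson nature of $X$.

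First I would apply Theorem \ref{thm:ExpectFvector} with $\ell=n$. In this case $E=\operatorname{span}\{e_1,\ldots,e_n\}=\RR^n$, so $Z_0|E=Z_0$ and $\tW^{E}_{n(1-r)}(Z_0|E)=\tW_{n(1-r)}(Z_0)$. The product $\o_{n-\ell+1}\cdots\o_n/(\o_1\cdots\o_\ell)$ then collapses to $1$ and the exponent $n-\ell+1$ of $\nabla_\ell$ equals $1$, so $c_r(n,n)=c_r(n)/(r\,n!\,\o_n^n)$ and the identity becomes
$$\EE f_0(Z_0)=\frac{\g^n\,c_r(n)}{r\,n!\,\o_n^n}\,\EE\tW_{n(1-r)}(Z_0).$$

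Second, I would compute $\EE\tW_{n(1-r)}(Z_0)$. By Fubini and the definition of the dual intrinsic volume,
$$\EE\tW_{n(1-r)}(Z_0)=\frac{1}{n}\int_{\SS^{n-1}}\EE\r_{Z_0}(u)^{nr}\,\cH^{n-1}(\dint u),$$
so I need the distribution of $\r_{Z_0}(u)$ for a fixed direction $u\in\SS^{n-1}$. Since $\r_{Z_0}(u)>s$ is equivalent to no hyperplane $H(v,t)\in X$ hitting the segment $[0,su]$, i.e.\ to $t/\lan u,v\ran\notin[0,s]$ for every atom, the Poisson property and \eqref{def:Theta} yield
$$\PP[\r_{Z_0}(u)>s]=\exp\!\Bigl(-\frac{\g s^r}{r\,\o_n}\int_{\SS^{n-1}}|\lan u,v\ran|^r\,\cH^{n-1}(\dint v)\Bigr).$$
The inner integral is independent of $u$, and by spherical coordinates around $u$ together with \eqref{eqGamma} (applied with $\a=n-2$, $\b=r$) it equals $2\o_{n+r}/\o_{r+1}$. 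Hence $\r_{Z_0}(u)^{r}$ is exponentially distributed with rate $\l=2\g\o_{n+r}/(r\,\o_n\,\o_{r+1})$, giving $\EE\r_{Z_0}(u)^{nr}=n!/\l^n$ and therefore $\EE\tW_{n(1-r)}(Z_0)=\o_n (n-1)!/\l^n$.

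Finally, I would plug this back in and simplify. The powers of $\g$ cancel between $\g^n$ and $\l^{-n}$; using $\o_n/n=\k_n$ and the explicit value of $\l$, what remains is exactly $\frac{\k_n}{r}\bigl(\frac{r}{2}\,\frac{\o_{r+1}}{\o_{r+n}}\bigr)^n c_r(n)$, as claimed. I do not expect a real obstacle: the only nonroutine step is the evaluation of $\int_{\SS^{n-1}}|\lan u,v\ran|^r\,\cH^{n-1}(\dint v)$, and that reduces immediately to the beta-integral identity \eqref{eqGamma}; everything else is bookkeeping of the $\o$- and $\k$-constants.
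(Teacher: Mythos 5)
Your proposal is correct and follows essentially the same route as the paper: specialize Theorem \ref{thm:ExpectFvector} to $\ell=n$, express $\EE\tW_{n(1-r)}(Z_0)$ via the radial function, and use the Poisson void probability $\PP(\r_{Z_0}(u)>s)=e^{-\Theta(A_{[0,su]})}$ together with the beta-integral \eqref{eqGamma}. The only cosmetic differences are that you re-derive the content of Lemma \ref{lem:ThetaStrecke0z} inline instead of citing it, and you evaluate the resulting moment by identifying $\r_{Z_0}(u)^r$ as exponential rather than by the layer-cake formula \eqref{eq:Momentenformel} followed by the same gamma integral.
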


For $\ell\in\{0,\ldots,n-1\}$, we are in general not able to simplify the expression obtained for $\EE f_{n-\ell}(Z_0)$ further. However,
since $Z_0$ is almost surely a simple polytope (recall that a polytope is simple if and only if each vertex has exactly $n$ outgoing edges, or if and only if each vertex is contained in exactly $n$ facets),
 with probability one we have $f_1(Z_0)={\frac n 2}f_0(Z_0)$. In particular, for $n=3$ we have the additional Euler relation $f_2(Z_0)=2-f_0(Z_0)+f_1(Z_0)$, so that
$$\EE{ f}(Z_0)=\Big(\EE f_0(Z_0),\,{\frac 3 2}\EE f_0(Z_0),\,2+{\frac 1  2}\EE f_0(Z_0)\Big)\,.$$
This means that for $n=3$ the mean $f$-vector of $Z_0$ is completely determined by the mean number of vertices. For general space dimensions we have the following inequalities.

\begin{corollary}\label{cor:EstimateFVector}
If $\ell\in\{1,\ldots,n-1\}$, then
$$c_r(n,\ell)\,(\ell-1)!\,\o_\ell\,\left(r\,\frac{\o_n\o_{r+1}}{ 2\o_{r+n}}\right)^\ell\leq\EE f_{n-\ell}(Z_0)\leq  \binom{n}{\ell}\,\EE f_0(Z_0)$$
with $c_r(n,\ell)$ as in Theorem \ref{thm:ExpectFvector}.
\end{corollary}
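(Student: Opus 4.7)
The plan splits along the two inequalities.

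For the \emph{upper bound}, the key input is that the Poisson hyperplane process is almost surely in general position, so $Z_0$ is almost surely a simple polytope. In a simple $n$-polytope each vertex is the intersection of exactly $n$ facets, and the $(n-\ell)$-faces incident to a given vertex are in bijection with the $\ell$-element subsets of those $n$ facets; each vertex therefore lies in exactly $\binom{n}{\ell}$ of the $(n-\ell)$-faces. Double-counting vertex--$(n-\ell)$-face incidences and using that every $(n-\ell)$-face contains at least one vertex gives $f_{n-\ell}(Z_0)\leq\binom{n}{\ell}\,f_0(Z_0)$ almost surely, and taking expectations yields the claimed bound.

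For the \emph{lower bound}, I begin with the identity \eqref{eq:MainIdentity} and substitute the defining integral of $\tW^{E}_{\ell(1-r)}(Z_0|E)$. Because $0\in Z_0$, every $u\in\SS_E$ satisfies $\r_{Z_0|E}(u)\geq\r_{Z_0}(u)$, so
\begin{equation*}
\tW^{E}_{\ell(1-r)}(Z_0|E)=\frac{1}{\ell}\int_{\SS_E}\r_{Z_0|E}(u)^{\ell r}\,\cH^{\ell-1}(\dint u)\geq\frac{1}{\ell}\int_{\SS_E}\r_{Z_0}(u)^{\ell r}\,\cH^{\ell-1}(\dint u).
\end{equation*}
Taking expectations, interchanging expectation with the spherical integral by Fubini, and invoking the rotation invariance of the distribution of $Z_0$, the problem reduces to computing the single moment $\EE[\r_{Z_0}(u)^{\ell r}]$ for one fixed direction.

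The crucial observation is that $\r_{Z_0}(u)^{r}$ is an exponential random variable. The Poisson void probability together with the definition \eqref{def:Theta} of $\Theta$ gives
\begin{equation*}
\PP(\r_{Z_0}(u)\geq t)=\exp\!\left(-\frac{\g\,t^{r}}{r\,\o_n}\int_{\SS^{n-1}}|\langle u,v\rangle|^{r}\,\cH^{n-1}(\dint v)\right)=\exp(-\a\,t^{r}),
\end{equation*}
after evaluating the spherical integral by passing to polar coordinates around $u$ and applying \eqref{eqGamma}, which produces the value $2\o_{n+r}/\o_{r+1}$ and hence $\a=2\g\,\o_{r+n}/(r\,\o_n\,\o_{r+1})$. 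Consequently $\r_{Z_0}(u)^{r}$ is exponential of rate $\a$, and the standard moment identity $\EE[Y^{\ell}]=\ell!/\a^{\ell}$ gives $\EE[\r_{Z_0}(u)^{\ell r}]=\ell!/\a^{\ell}$. Plugging in, multiplying by $c_r(n,\ell)\,\g^{\ell}$ and using $\o_\ell\,\ell!/\ell=(\ell-1)!\,\o_\ell$, the factors of $\g$ cancel and deliver the asserted lower bound.

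The only non-trivial step is the identification of the distribution of $\r_{Z_0}(u)^{r}$, which rests on the explicit evaluation of the spherical integral of $|\langle u,v\rangle|^{r}$; once the parameter $\a$ is in hand, the remaining steps are arithmetic, with the factor $\ell!$ from the moment of the exponential combining with the $1/\ell$ from the definition of the dual intrinsic volume to produce exactly the $(\ell-1)!$ appearing in the statement.
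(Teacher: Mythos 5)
Your proposal is correct and takes essentially the same route as the paper: the upper bound by counting vertex--face incidences in the almost surely simple polytope $Z_0$ (the paper's count is even slightly sharper, using that each $(n-\ell)$-face has at least $n-\ell+1$ vertices, but your weaker count suffices for the stated bound), and the lower bound by combining Theorem \ref{thm:ExpectFvector} with $\r_{Z_0|E}(u)\geq\r_{Z_0}(u)$ for $u\in\SS_E$ and the void probability $\PP(\r_{Z_0}(u)>s)=e^{-\Theta(A_{[0,su]})}$ from Lemma \ref{lem:ThetaStrecke0z}. Your identification of $\r_{Z_0}(u)^r$ as an exponential random variable with rate $2\g\o_{r+n}/(r\o_n\o_{r+1})$ is simply a repackaging of the paper's direct evaluation of the resulting gamma-type integral, so no genuinely different ideas are involved.
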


So far we have obtained exact formulae (or upper and lower bounds) for the mean number of faces of $Z_0$. These expressions involve the constants $c_r(n,\ell)$ and $c_r(n)$, which are difficult to handle. For this reason, it is desirable to have upper and lower bounds for $c_r(n,\ell)$, and thus also for $c_r(n)$. Our next result provides such bounds.

\begin{proposition}\label{Prop:ConstantEstimate}
For $\ell\in\{1,\ldots,n\}$ and $r>0$, the inequalities
$$
A(n,\ell,r)
\leq c_r(n,\ell) \leq \ell^{n-\ell+1}A(n,\ell,r)
$$
are satisfied with
%\begin{equation}%\label{eq:Anlr}
$$
A(n,\ell,r)={\frac {2^{\ell}}{r\,\ell!}}\, \frac{\o_r\o_{n-\ell+1}}{\o_\ell\o_{n-\ell+r+1}}\left(\frac{\o_{n+r}}{\o_r\o_n}\right)^\ell\,.
$$
%\end{equation}
In particular,
$$c_r(n,1)=A(n,1,r)= r^{-1}.$$
\end{proposition}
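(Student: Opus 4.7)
The plan: I write $c_r(n,\ell) = C_0\,J$ with
$C_0 := \frac{1}{r\ell!}\omega_n^{-\ell}\frac{\omega_{n-\ell+1}\cdots\omega_n}{\omega_1\cdots\omega_\ell}$ and
\[
J := \int_{(\SS^{\ell-1})^\ell} \nabla_\ell(u_1,\ldots,u_\ell)^{n-\ell+1}\prod_{j=1}^\ell |\langle u_j,e_\ell\rangle|^{r-1}\,\cH^{\ell(\ell-1)}(du),
\]
and then bound $J$ from above and below. The common framework is a spherical decomposition $u_j = a_j e_\ell + b_j v_j$ with $v_j \in \SS^{\ell-2} \subset e_\ell^\perp$, together with the Laplace expansion of the determinant along the last row of $U=(u_1|\cdots|u_\ell)$:
\[
\det U = \sum_{j=1}^\ell (-1)^{\ell+j}\,a_j\Bigl(\prod_{i\neq j}b_i\Bigr)\det V_{-j},
\]
where $V_{-j}$ is obtained by deleting the $j$-th column of the $(\ell-1)\times \ell$ matrix $V=(v_1|\cdots|v_\ell)$ viewed in $\RR^{\ell-1}$.

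For the upper bound, I combine the triangle inequality with the convexity inequality $(\sum_j x_j)^m \leq \ell^{m-1}\sum_j x_j^m$ with $m = n-\ell+1$ to obtain
\[
\nabla_\ell^{n-\ell+1}\leq \ell^{n-\ell}\sum_{j=1}^\ell |a_j|^{n-\ell+1}\prod_{i\neq j}b_i^{n-\ell+1}\,|\det V_{-j}|^{n-\ell+1}.
\]
Substituting into $J$, using the $S_\ell$-symmetry of the summands to collapse the sum to $\ell$ identical terms, and applying Fubini, the representative integral factorises into (i) the marginal $\int_{\SS^{\ell-1}}|a_j|^{n-\ell+r}du_j$ over $u_j$, (ii) a product over $i\neq j$ of separable $(a_i,v_i)$-integrals over $u_i$, and (iii) the pure determinantal integral $\int_{(\SS^{\ell-2})^{\ell-1}}|\det V_{-j}|^{n-\ell+1}\prod dv_i$. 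Items (i) and (ii) are evaluated by the identity $\int_{\SS^{\ell-1}}|\langle u,e_\ell\rangle|^s\,du = 2\omega_{\ell+s}/\omega_{s+1}$ combined with \eqref{eqGamma}; item (iii) is a classical Blaschke--Petkantschin-type integral equal to $\omega_1\cdots\omega_{\ell-1}\omega_n^{\ell-2}/(\omega_{n-\ell+2}\cdots\omega_{n-1})$, derivable from the Gaussian representation of $\EE|\det|^s$ on $(\ell-1)\times(\ell-1)$ matrices. Multiplying by $C_0$ and simplifying the resulting $\omega$-product by repeated use of \eqref{eqGamma} yields exactly $\ell^{n-\ell+1}\,A(n,\ell,r)$. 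The case $\ell=1$ is a direct check: $\nabla_1\equiv 1$ on $\SS^0$ gives $J=2$ and $c_r(n,1)=A(n,1,r)=1/r$.

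For the lower bound, I apply Jensen's inequality to the probability measure $d\mu := Z^{-1}\prod_j|a_j|^{r-1}du_j$ (with $Z=(2\omega_{\ell+r-1}/\omega_r)^\ell$) and the convex function $t\mapsto t^{n-\ell+1}$, reducing the task to a lower bound on the first-moment integral $J_1 := \int\nabla_\ell\prod|a_j|^{r-1}du$. Integrating the $v_j$'s first, the parity symmetry $v_i\mapsto -v_i$ annihilates the cross terms in $\nabla_\ell^2 = \sum_{j,k}(-1)^{j+k}c_jc_k\det V_{-j}\det V_{-k}$, so the second-moment integral $\int\nabla_\ell^2\prod|a_j|^{r-1}du$ is computable in closed form by the same determinantal identity used in (iii). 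A moment inequality of Khintchine or Paley--Zygmund type, bounding $\EE_v|Y|$ from below in terms of $\EE_v Y^2$ for $Y = \sum_j(-1)^{\ell+j}c_j\det V_{-j}$, then relays back to $J_1$, and simplification via \eqref{eqGamma} produces $c_r(n,\ell)\geq A(n,\ell,r)$. The main obstacle is precisely this step: $\nabla_\ell = |\det U|$ has no useful pointwise lower bound and the signed Laplace expansion introduces cancellations, so the lower bound must pass through Jensen together with a controlled second-moment computation; the final collapse of $\omega$-ratios to the prescribed form of $A(n,\ell,r)$ hinges on careful algebraic bookkeeping with \eqref{eqGamma}.
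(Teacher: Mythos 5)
Your upper bound is essentially the paper's own argument: Laplace expansion in the decomposition $u_j=\cos\theta_j\,e_\ell+\sin\theta_j\,v_j$, triangle inequality plus the power-mean inequality contributing the factor $\ell^{n-\ell}$, symmetrization over the $\ell$ identical terms, and evaluation of the angular integrals via \eqref{eqGamma} together with the (backwards) Blaschke--Petkantschin identity for the determinantal integral; your value for that integral agrees with the paper's, and the check for $\ell=1$ is fine.

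The lower bound, however, has a genuine gap, precisely at the step you yourself flag as the main obstacle. Applying Jensen globally, i.e.\ $J\ge Z^{-(n-\ell)}J_1^{\,n-\ell+1}$ with $J_1=\int\nabla_\ell\prod_j|\langle u_j,e_\ell\rangle|^{r-1}$, discards an exponentially large factor in $n-\ell$ and cannot reach $A(n,\ell,r)$, even if $J_1$ were known exactly. Test case $\ell=2$, $r=1$: writing $\phi$ for the angle between $u_1$ and $u_2$, one has $J=\int_{(\SS^1)^2}\nabla_2^{\,n-1}=2\pi\int_0^{2\pi}|\sin\phi|^{n-1}\dint\phi=4\pi\,\omega_{n+1}/\omega_n\sim 4\pi\sqrt{2\pi/n}$, and the claim $c_r(n,2)\ge A(n,2,1)$ amounts to $J\ge 4\,\omega_{n+1}^2/\omega_n^2\sim 8\pi/n$; but Jensen with the \emph{exact} first moment $J_1=8\pi$ and $Z=(2\pi)^2$ only yields $J\ge 4\pi^2(2/\pi)^{n-1}$, which is exponentially small and drops below $8\pi/n$ already for moderate $n$. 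The subsequent Khintchine/Paley--Zygmund step can only make this worse: such inequalities carry unspecified absolute constants (and lower-bounding $\EE|Y|$ by a multiple of $(\EE Y^2)^{1/2}$ needs fourth-moment control as well), so there is no reason the bookkeeping should collapse to exactly $A(n,\ell,r)$, and you never verify that it does. The paper's fix is more surgical: it applies the H\"older/Jensen reduction only in the single variable $v_1$, with the explicit constant $\omega_{\ell-1}^{-(n-\ell)}$, and then uses $\int|f|\,\dint v_1\ge|\int f\,\dint v_1|$; by the parity $v_1\mapsto-v_1$ every term of the Laplace expansion containing $v_1$ integrates to zero, only the $j=1$ term survives, and the remaining factor $|\det(v_2,\ldots,v_\ell)|^{n-\ell+1}$ is kept at full power and evaluated exactly by Blaschke--Petkantschin. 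You would need to replace the global Jensen plus second-moment argument by a one-variable, lossless argument of this type for the stated constant to come out.
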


%This can now be combined with Theorem \ref{thm:ExpectFvector} and Corollary \ref{VerticeNumberFormula}:
\begin{corollary}\label{cor:EndabschaetzungFVektor}
If $\ell\in\{1,\ldots,n\}$, then
\begin{equation}\label{fnml}
\frac{\kappa_r}{\ell}\frac{\omega_{n-\ell+1}}{\omega_{n-\ell+1+r}}\left(\frac{\omega_{r+1}}{\kappa_r}\right)^\ell
\leq\EE f_{n-\ell}(Z_0)\le 2\binom{n}{\ell}\left(\frac{\o_{r+1}}{\kappa_r}\right)^{n-1}\,.
\end{equation}
In particular,
\begin{equation}\label{fnull}
{\frac 2 n}\left(\frac {\o_{r+1}}{ \k_r}\right)^{n-1}\leq\EE f_0(Z_0)\leq 2 \left(\frac{\o_{r+1}}{\k_r}\right)^{n-1}\,.
\end{equation}
\end{corollary}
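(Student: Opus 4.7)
The plan is to derive all four estimates by combining Corollary \ref{cor:EstimateFVector}, Proposition \ref{Prop:ConstantEstimate} and Corollary \ref{VerticeNumberFormula}, together with the elementary identities $\o_s = s\k_s$ and $\o_1 = 2$ that follow from \eqref{eq:OmegaKappaExakt}. The heart of the computation is that the factorials together with all $n$-dependent surface-area factors (namely $\o_n^\ell$ and $\o_{n+r}^\ell$) that enter $A(n,\ell,r)$ and the bracket $(r\o_n\o_{r+1}/(2\o_{r+n}))^\ell$ telescope against each other; I record here only the cancellation pattern.

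For the lower bound in \eqref{fnml}, I start from
$$\EE f_{n-\ell}(Z_0) \ \geq\ c_r(n,\ell)\,(\ell-1)!\,\o_\ell\,\bigl(r\o_n\o_{r+1}/(2\o_{r+n})\bigr)^\ell$$
of Corollary \ref{cor:EstimateFVector} and substitute $c_r(n,\ell) \geq A(n,\ell,r)$ from Proposition \ref{Prop:ConstantEstimate}. The factors $2^\ell$, $\o_n^\ell$ and $\o_{n+r}^\ell$ occur with opposite exponents in $A(n,\ell,r)$ and in $(r\o_n\o_{r+1}/(2\o_{r+n}))^\ell$, while $(\ell-1)!/\ell! = 1/\ell$ and the two copies of $\o_\ell$ cancel, leaving
$$\frac{r^{\ell-1}}{\ell}\,\frac{\o_{n-\ell+1}}{\o_{n-\ell+r+1}}\,\frac{\o_{r+1}^\ell}{\o_r^{\ell-1}}.$$
A single application of $\o_r = r\k_r$, in the form $r^{\ell-1}/\o_r^{\ell-1} = 1/\k_r^{\ell-1}$, rewrites this as the desired $\tfrac{\k_r}{\ell}\tfrac{\o_{n-\ell+1}}{\o_{n-\ell+r+1}}(\o_{r+1}/\k_r)^\ell$. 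Specialising to $\ell = n$ and invoking $\o_1 = 2$ gives the lower bound in \eqref{fnull}.

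For the upper bound in \eqref{fnml}, the combinatorial estimate $\EE f_{n-\ell}(Z_0) \leq \binom{n}{\ell}\EE f_0(Z_0)$ of Corollary \ref{cor:EstimateFVector} reduces the problem to proving the upper bound in \eqref{fnull}. Here I would invoke the explicit formula of Corollary \ref{VerticeNumberFormula}. Comparing the definition of $c_r(n)$ with the $\ell = n$ instance of the definition of $c_r(n,\ell)$ in Theorem \ref{thm:ExpectFvector} yields $c_r(n) = r\,n!\,\o_n^n\,c_r(n,n)$, so
$$\EE f_0(Z_0) = \k_n\,n!\,\o_n^n\,\bigl(r\o_{r+1}/(2\o_{r+n})\bigr)^n\, c_r(n,n).$$
Inserting the upper bound $c_r(n,n) \leq n\,A(n,n,r)$ from Proposition \ref{Prop:ConstantEstimate} triggers exactly the same telescopic cancellations as in the previous paragraph. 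After using $\o_n = n\k_n$ (which collapses $n\k_n/\o_n$ to $1$), $\o_r = r\k_r$ and $\o_1 = 2$, the bound reduces to $2(\o_{r+1}/\k_r)^{n-1}$, which is the claim.

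I do not foresee a conceptual obstacle: the statement amounts to careful bookkeeping with gamma-function ratios, the substantive inputs being Corollary \ref{cor:EstimateFVector}, Proposition \ref{Prop:ConstantEstimate} and Corollary \ref{VerticeNumberFormula}. The mildly delicate point is to keep the $\ell$-th powers lined up while performing the cancellations and to recognise at the end that the only surviving $n$-dependence sits in the ratio $\o_{n-\ell+1}/\o_{n-\ell+r+1}$.
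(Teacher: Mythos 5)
Your proposal is correct and follows essentially the same route as the paper: substitute the bounds for $c_r(n,\ell)$ from Proposition \ref{Prop:ConstantEstimate} into Corollary \ref{cor:EstimateFVector}, and treat $\EE f_0(Z_0)$ via Corollary \ref{VerticeNumberFormula} together with the identity $c_r(n)=r\,n!\,\o_n^n\,c_r(n,n)$; your cancellation bookkeeping (using $\o_s=s\k_s$ and $\o_1=2$) checks out. The only cosmetic point is that you obtain the lower bound in \eqref{fnull} by specialising the \eqref{fnml} computation to $\ell=n$, although Corollary \ref{cor:EstimateFVector} is stated only for $\ell\le n-1$; since the exact formula of Corollary \ref{VerticeNumberFormula} combined with $c_r(n,n)\ge A(n,n,r)$ yields the identical algebra (as in the paper), this is not a genuine gap.
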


 After having investigated the $f$-vector of $Z_0$, let us finally turn to certain metric parameters of the zero cell. For $\ell\in\{0,\ldots,n-1\}$ the $\ell$-skeleton $\text{skel}_\ell( Z_0) $ of $Z_0$ is the union of all $\ell$-dimensional faces of $Z_0$. Our next result provides an explicit expression for $\EE\mathcal{H}^\ell(\text{skel}_\ell( Z_0))$, the expected $\ell$-dimensional Hausdorff measure of $\text{skel}_\ell( Z_0)$. This generalizes Theorem 10 in \cite{Wieacker} (see also \cite[Equation (10.51)]{SW}) from $r=1$ to general distance exponents.

\begin{theorem}\label{thm:Skelett}
Let $\ell \in \{1,\ldots,n\}.$ Then
\begin{align*}
\mathbb{E}\mathcal{H}^{n-\ell} (\text{skel}_{n-\ell}( Z_0)) &= 2\,c_r(n,\ell) \, \gamma^{-\frac{n-\ell}{ r}} \,
\frac{\o_\ell \o_{\ell r+n-\ell}}{ \o_{\ell r} \o_{2(\ell+\frac{n-\ell }{ r})}}
\left(r\,{\frac \pi 2}  \frac{\o_n\o_{r+1}}{ \o_{r+n}}\right)^{\ell+\frac{n-\ell}{ r}}
\end{align*}
with $c_r(n,\ell)$ as in Theorem \ref{thm:ExpectFvector}.
\end{theorem}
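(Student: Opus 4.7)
The plan is to mirror the proof of Theorem~\ref{thm:ExpectFvector}, but to keep track of the $(n-\ell)$-dimensional Hausdorff content of each face instead of merely counting faces. Almost surely, every $(n-\ell)$-face of $Z_0$ equals $Z_0\cap H_{i_1}\cap\cdots\cap H_{i_\ell}$ for a unique unordered $\ell$-tuple of hyperplanes from the Poisson process $X$, and conversely such an intersection is either empty or yields such a face. The multivariate Mecke formula applied to $X$ therefore gives
$$
\ell!\,\EE\,\cH^{n-\ell}(\text{skel}_{n-\ell}(Z_0))
=\int \EE\,\cH^{n-\ell}\bigl(Z_0\cap H_1\cap\cdots\cap H_\ell\bigr)\,\Theta^\ell(\dint(H_1,\ldots,H_\ell)),
$$
since adjoining the $H_i$ to $X$ does not affect the geometry on their common intersection. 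Fubini turns the inner expectation into $\int_{H_1\cap\cdots\cap H_\ell}\PP(x\in Z_0)\,\cH^{n-\ell}(\dint x)$, and the Poisson void formula together with the rotation invariance of $\Theta$ evaluates the void probability as $\PP(x\in Z_0)=\exp(-\gamma C_1\|x\|^r)$, with $C_1=2\o_{n+r}/(r\o_n\o_{r+1})$.

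Next I parametrise $H_i=H(u_i,t_i)$ and change variables on the inner integral via $t_i=\langle x,u_i\rangle$: the combined differential $\dint t_1\cdots\dint t_\ell\,\cH^{n-\ell}(\dint x)$ on $H_1\cap\cdots\cap H_\ell$ becomes $\nabla_\ell(u_1,\ldots,u_\ell)\,\cH^n(\dint x)$ on $\RR^n$. Introducing polar coordinates $x=Rv$ factorises the $\RR^n$-integral into the radial part $\int_0^\infty\exp(-\gamma C_1R^r)R^{\ell(r-1)+n-1}\,\dint R=\G(\ell+(n-\ell)/r)/(r(\gamma C_1)^{\ell+(n-\ell)/r})$ and the angular part $\int_{\SS^{n-1}}\prod_i|\langle v,u_i\rangle|^{r-1}\,\cH^{n-1}(\dint v)$. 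Swapping the $v$- and $(u_i)$-integrations and exploiting rotation invariance to set $v=e_n$ reduces the remaining spherical quantity to
$$
J:=\o_n\int_{(\SS^{n-1})^\ell}\nabla_\ell(u_1,\ldots,u_\ell)\prod_{i=1}^\ell|\langle e_n,u_i\rangle|^{r-1}\,\cH^{\ell(n-1)}(\dint(u_1,\ldots,u_\ell)).
$$

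The decisive step is to recognise $J$ as a multiple of $c_r(n,\ell)$. For this I invoke the spherical version of the Blaschke--Petkantschin formula, which rewrites the outer integral as an iterated integral over $L\in G(n,\ell)$ and $(u_1,\ldots,u_\ell)\in(\SS_L)^\ell$ with an additional power $\nabla_\ell^{n-\ell}$ and the Grassmannian constant $\o_{n-\ell+1}\cdots\o_n/(\o_1\cdots\o_\ell)$. For $u_i\in\SS_L$ one factorises $\langle e_n,u_i\rangle=\|\pi_L(e_n)\|\,\langle\hat p_L,u_i\rangle$ with $\hat p_L=\pi_L(e_n)/\|\pi_L(e_n)\|$, and a rotation within $L$ identifies the resulting inner integral with precisely the integral defining $c_r(n,\ell)$ in Theorem~\ref{thm:ExpectFvector}. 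The remaining Grassmannian moment $\int_{G(n,\ell)}\|\pi_L(e_n)\|^{\ell(r-1)}\,\nu_\ell(\dint L)$ is an $\ell(r-1)$-power of a unit-vector projection and, via the $\mathrm{Beta}(\ell/2,(n-\ell)/2)$ distribution of its squared length, evaluates to $\o_\ell\o_{\ell r+n-\ell}/(\o_n\o_{\ell r})$. Assembling all factors and rewriting $\G(\ell+(n-\ell)/r)=2\pi^{\ell+(n-\ell)/r}/\o_{2(\ell+(n-\ell)/r)}$ together with $C_1^{-1}=(r/2)\,\o_n\o_{r+1}/\o_{n+r}$ makes the large product of gamma-values and surface-area constants collapse into the expression claimed in the theorem. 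I expect the principal obstacle to be the careful bookkeeping: the Blaschke--Petkantschin normalisation must cancel exactly with the reciprocal constant built into $c_r(n,\ell)$, and the $\G$-factor must be re-expressed via $\o_{2m}$ for the denominator $\o_{2(\ell+(n-\ell)/r)}$ to materialise in the final form.
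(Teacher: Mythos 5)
Your proposal is correct, and the constants do collapse as you predicted; I checked the bookkeeping. The route, however, is organised differently from the paper's. Both proofs start from the same Mecke identity $\ell!\,\EE\cH^{n-\ell}(\text{skel}_{n-\ell}(Z_0))=\int\EE\,\cH^{n-\ell}(Z_0\cap H_1\cap\dots\cap H_\ell)\,\Theta^\ell(\dint(H_1,\dots,H_\ell))$. The paper then repeats verbatim the reduction from the proof of Theorem~\ref{thm:ExpectFvector} (the map $T$, the linear Blaschke--Petkantschin formula, isotropy of $Z_0$), so that $c_r(n,\ell)$ appears at once and one is left with $r\,c_r(n,\ell)\,\gamma^\ell\,\EE\int_E\|z\|^{\ell(r-1)}\,\cH^{n-\ell}((z+E^\perp)\cap Z_0)\,\cH^\ell(\dint z)$; only afterwards does it use Fubini, the void probability from Lemma~\ref{lem:ThetaStrecke0z}, and a two-radius polar computation evaluated with \eqref{eqGamma}. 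You instead integrate out the randomness first, via $\PP(x\in Z_0)=e^{-\gamma C_1\|x\|^r}$, merge the $t$-variables with the Hausdorff measure on the flat into $\nabla_\ell(u_1,\dots,u_\ell)\,\cH^n(\dint x)$, perform a single radial Gamma integral, and only then apply Blaschke--Petkantschin to the $u$-integral. Because your reference direction $e_n$ is then a fixed vector of $\RR^n$ rather than a vector of $L$, you need the additional Grassmannian moment $\int_{G(n,\ell)}\|p_L\|^{\ell(r-1)}\,\nu_\ell(\dint L)$, where $p_L$ is the orthogonal projection of $e_n$ onto $L$; your identification of this moment through the $\mathrm{Beta}(\ell/2,(n-\ell)/2)$ law of $\|p_L\|^2$ gives $\o_\ell\o_{\ell r+n-\ell}/(\o_n\o_{\ell r})$, which is exactly the constant the paper obtains instead from $\int_0^{\pi/2}(\cos\theta)^{\ell r-1}(\sin\theta)^{n-\ell-1}\,\dint\theta$. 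With $\G\big(\ell+\tfrac{n-\ell}{r}\big)=2\pi^{\ell+\frac{n-\ell}{r}}/\o_{2(\ell+\frac{n-\ell}{r})}$ and $C_1^{-1}=\tfrac{r}{2}\,\o_n\o_{r+1}/\o_{n+r}$, your factors reassemble into precisely the stated formula. What your route buys: isotropy of $Z_0$ is never invoked (only rotation invariance of the deterministic void probability), and the paper's two-variable polar step is replaced by a clean probabilistic projection moment. What it costs: the extra Beta-moment lemma, which the paper avoids by reusing the Theorem~\ref{thm:ExpectFvector} reduction, where the directional weight is automatically taken with respect to a unit vector lying in $L$.
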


\subsection{Special formulae for $r=1$}\label{subsec:SpecialCaser=1}
Now we turn to Poisson hyperplane processes with an intensity measure as in \eqref{def:Theta} and specialize to the case $r=1$, where $X$  is not only rotation but also translation invariant. The mean $f$-vector of the corresponding zero cell $Z_0$ has been studied for a long time and has turned out to be a notoriously difficult object. The only known explicit result is $\EE f_0(Z_0)=n!\,2^{-n}\,\kappa_n^2$, see \cite[Theorem 10.4.9]{SW}. In \cite{Schneider09} the $f$-vector of $Z_0$ has been studied for translation invariant but anisotropic hyperplane processes. One of the main results of that paper relates $\EE f_{n-\ell}(Z_0)$ to an integral average of expected projection volumes of the zero cell (see \cite[Equation (21)]{Schneider09}). Hence, in the isotropic case, $\EE f_{n-\ell}(Z_0)$ is proportional to the expected $\ell$-th intrinsic volume of $Z_0$ (see \cite[p.~693]{Schneider09}). The latter result is also recovered by our Theorem \ref{thm:ExpectFvector}. We want to go one step further by unifying and extending several formulas for the Poisson zero cell that are available in the literature. For this reason, we define $$F_{\ell;j}(P)=\sum_{F\in\cF_{\ell}(P)}V_j(F)$$
for a polytope $P$ in $\RR^n$, $\ell\in\{0,\ldots,n\}$ and $j\in\{0,\ldots,\ell\}$.

\begin{theorem}\label{thm:VereinheitnlichungSchneiderWieacker}
If $r=1$, then
$$\EE F_{n-\ell;j}(Z_0)=
\g^\ell\,\binom{\ell+j}{ \ell}\,\left(\frac{\kappa_{n-1}}{ \o_n}\right)^\ell\, \frac {\kappa_{\ell+j}}{\kappa_j}\,\EE V_{\ell+j}(Z_0)$$
for all $\ell\in\{0,\ldots,n\}$ and $j\in\{0,\ldots,n-\ell\}$.
\end{theorem}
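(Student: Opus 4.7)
The strategy is the classical three-step combination: multivariate Mecke, affine Blaschke-Petkantschin, and affine Crofton. First, since $Z_0$ is a.s.\ simple and every $(n-\ell)$-face arises as $Z_0\cap H_1\cap\cdots\cap H_\ell$ for a unique unordered $\ell$-subset of $X$ (with no lower-dimensional coincidences in general position), one has
\[
F_{n-\ell;j}(Z_0)=\frac{1}{\ell!}\sum_{(H_1,\ldots,H_\ell)\in X^\ell_{\ne}}V_j\bigl(Z_0\cap H_1\cap\cdots\cap H_\ell\bigr),
\]
where tuples whose intersection with $Z_0$ is empty contribute zero. The multivariate Mecke equation for the Poisson process $X$, together with $\Theta=\gamma\mu_{n-1}$ (the case $r=1$) and the fact that inserting $H_1,\ldots,H_\ell$ into $X$ leaves $Z_0\cap H_1\cap\cdots\cap H_\ell$ unchanged, then yields
\[
\EE F_{n-\ell;j}(Z_0)=\frac{\gamma^\ell}{\ell!}\int\cdots\int\EE V_j\bigl(Z_0\cap H_1\cap\cdots\cap H_\ell\bigr)\,\mu_{n-1}(dH_1)\cdots\mu_{n-1}(dH_\ell).
\]

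Second, I would invoke the affine Blaschke-Petkantschin formula for an $\ell$-tuple of hyperplanes, disintegrating $\mu_{n-1}^{\otimes\ell}$ over the intersection flat $E=H_1\cap\cdots\cap H_\ell\in A(n,n-\ell)$ tensored with an integral over the $\ell$ relative normal directions in $E^\perp$ weighted by a Gram-determinant Jacobian. Since the integrand reduces to $V_j(Z_0\cap E)$ and depends only on $E$, the inner integral decouples and, by rotational symmetry, evaluates to a constant $b(n,\ell)$ that is independent of $j$. Combined with the classical affine Crofton formula
\[
\int_{A(n,n-\ell)}V_j(K\cap E)\,\mu_{n-\ell}(dE)=\frac{\binom{\ell+j}{\ell}}{\binom{n}{\ell}}\frac{\kappa_{n-\ell}\kappa_{\ell+j}}{\kappa_j\kappa_n}\,V_{\ell+j}(K),
\]
applied pointwise to $K=Z_0$, this gives
\[
\EE F_{n-\ell;j}(Z_0)=b(n,\ell)\,\gamma^\ell\,\frac{\binom{\ell+j}{\ell}}{\binom{n}{\ell}}\frac{\kappa_{n-\ell}\kappa_{\ell+j}}{\kappa_j\kappa_n}\,\EE V_{\ell+j}(Z_0),
\]
which is already of the asserted form.

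Finally, rather than computing $b(n,\ell)$ directly from the Blaschke-Petkantschin Jacobian, I would pin it down by specialising to $j=0$: then $V_0(Z_0\cap E)=\mathbf{1}\{Z_0\cap E\ne\emptyset\}$ and $F_{n-\ell;0}(Z_0)=f_{n-\ell}(Z_0)$, so the previous display must reduce to the $r=1$ instance of Theorem \ref{thm:ExpectFvector} (equivalently, for $\ell=n$, to Schneider's identity \eqref{eq:SchneidersFormel}). Combining Theorem \ref{thm:ExpectFvector} with $\tW^E_0(Z_0|E)=V_\ell(Z_0|E)$ and the isotropic Kubota identity $\EE V_\ell(Z_0|E)=[\kappa_\ell\kappa_{n-\ell}/(\binom{n}{\ell}\kappa_n)]\,\EE V_\ell(Z_0)$ forces
\[
b(n,\ell)=\binom{n}{\ell}\frac{\kappa_n}{\kappa_{n-\ell}}\left(\frac{\kappa_{n-1}}{\omega_n}\right)^\ell,
\]
after which the claimed identity drops out. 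The main obstacle is precisely this constant bookkeeping across the Blaschke-Petkantschin and Crofton factors; the deductive mechanism itself is a transparent assembly of three standard integral-geometric tools.
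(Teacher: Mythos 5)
Your overall architecture --- Mecke, then a Blaschke--Petkantschin type disintegration of $\mu_{n-1}^{\otimes\ell}$ over the intersection flat $E=H_1\cap\cdots\cap H_\ell$, then a single application of the Crofton formula for $(n-\ell)$-flats --- is a legitimate alternative and correctly reduces the theorem to the identification of one $j$-independent constant $b(n,\ell)$: your $j$-dependent factors do match the asserted formula, and the Mecke step and the a.s.\ representation of the $(n-\ell)$-faces are handled as in the paper. The paper itself is more economical: after Mecke it applies Crofton's formula \cite[Theorem 5.1.1]{SW} $\ell$ times, one hyperplane at a time; each integration raises the intrinsic-volume index by one and contributes a factor $\frac{\kappa_{n-1}}{\omega_n}\,\frac{(i+1)\kappa_{i+1}}{\kappa_i}$, and the product over $i=j,\ldots,j+\ell-1$ assembles directly into $\big(\frac{\kappa_{n-1}}{\omega_n}\big)^\ell\frac{(\ell+j)!}{j!}\frac{\kappa_{\ell+j}}{\kappa_j}$, so no Blaschke--Petkantschin step and no separate constant bookkeeping are needed.

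The genuine gap is in your final step. Comparing your $j=0$ display with the $r=1$ instance of Theorem \ref{thm:ExpectFvector}, using $\tW^E_0(Z_0|E)=V_\ell(Z_0|E)$ and Kubota's formula, yields only $b(n,\ell)=c_1(n,\ell)$, where $c_1(n,\ell)$ is the \emph{unevaluated} constant of Theorem \ref{thm:ExpectFvector}, i.e.\ (up to explicit factors) the spherical moment $\int_{(\SS^{\ell-1})^\ell}\nabla_\ell(u_1,\ldots,u_\ell)^{n-\ell+1}\,\cH^{\ell(\ell-1)}\big(\dint(u_1,\ldots,u_\ell)\big)$. Nothing in Theorem \ref{thm:ExpectFvector} plus Kubota ``forces'' the closed form $b(n,\ell)=\binom{n}{\ell}\frac{\kappa_n}{\kappa_{n-\ell}}\big(\frac{\kappa_{n-1}}{\omega_n}\big)^\ell$; asserting it is exactly equivalent to evaluating that moment, which you neither carry out nor cite. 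Your parenthetical appeal to \eqref{eq:SchneidersFormel} pins down only $b(n,n)$, whereas $b(n,\ell)$ is needed for every $\ell$; invoking Schneider's isotropic identity \eqref{eq:AllgemeineSchneiderformel} for all $\ell$ from \cite{Schneider09} would close the gap, but then the $j=0$ case of the theorem is imported from the literature rather than proved. To complete your route, either evaluate the moment of $\nabla_\ell$ (e.g.\ by applying the linear Blaschke--Petkantschin formula \cite[Theorem 7.2.3]{SW} in reverse, as the paper does in the proof of Proposition \ref{Prop:ConstantEstimate}, together with \cite[Theorem 8.2.2]{SW}), or compute $b(n,\ell)$ directly from the Jacobian of your hyperplane-tuple disintegration, or simply switch to the iterated-Crofton argument, where the constant appears automatically.
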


Let us have a closer look at two particular instances of this identity. If $j=0$, then the formula in Theorem \ref{thm:VereinheitnlichungSchneiderWieacker} reduces to
\begin{equation}\label{eq:AllgemeineSchneiderformel}
\EE F_{n-\ell;0}(Z_0)=\EE f_{n-\ell}(Z_0)=\g^\ell\,\left(\frac{\k_{n-1}}{ \o_n}\right)^\ell\,\k_\ell\,\EE V_\ell(Z_0)\,,
\end{equation}
which is the previously mentioned consequence in \cite{Schneider09}. On the other hand, if $j=n-\ell$, then
\begin{equation*}\label{eq:Wieackerformel}
\EE F_{n-\ell;n-\ell}=
\EE\cH^{n-\ell}({\rm skel}_{n-\ell}(Z_0))=
\g^\ell\, \binom{n}{\ell}\,\left(\frac{\k_{n-1}}{ \o_n}\right)^\ell\,\frac{\k_n}{\k_{n-\ell}}\,\EE V_n(Z_0)\,.
\end{equation*}
This is known from \cite{Wieacker}, see also \cite[Theorem 10.4.9]{SW}.

Furthermore, we can  relate the $f$-vector of $Z_0$ to the quantities $F_{n-\ell;j}$, leading to identities that have, to the best of our knowledge, not been noticed in the literature, except for the case $\ell=0$.

\begin{corollary}\label{cor:VergleichfF}
If $\ell\in\{0,\ldots,n\}$ and $j\in\{0,\ldots,n-\ell\}$, then
$$\EE f_{n-\ell-j}(Z_0)=\g^j\,{\binom {\ell+j}{ \ell}}^{-1}\,\left(\frac{\kappa_{n-1}}{ \o_n}\right)^j\,\k_j\,\EE F_{n-\ell;j}(Z_0)\,.$$
\end{corollary}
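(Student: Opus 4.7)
The idea is simply to eliminate the expected intrinsic volume $\EE V_{\ell+j}(Z_0)$ between two instances of Theorem \ref{thm:VereinheitnlichungSchneiderWieacker}, so no new probabilistic or integral-geometric input is required beyond what has already been proved.

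\textbf{Plan.} First, I would apply Theorem \ref{thm:VereinheitnlichungSchneiderWieacker} as it stands, with the given parameters $\ell$ and $j$, to obtain
\[
\EE F_{n-\ell;j}(Z_0) \;=\; \gamma^\ell\,\binom{\ell+j}{\ell}\,\left(\frac{\kappa_{n-1}}{\omega_n}\right)^\ell\,\frac{\kappa_{\ell+j}}{\kappa_j}\,\EE V_{\ell+j}(Z_0).
\]
Second, I would apply the same theorem in the special case $j=0$ but with $\ell$ replaced by $\ell+j$ (so that the left-hand side becomes $\EE F_{n-\ell-j;0}(Z_0)=\EE f_{n-\ell-j}(Z_0)$); this is nothing but Schneider's formula \eqref{eq:AllgemeineSchneiderformel} at level $\ell+j$ and yields
\[
\EE f_{n-\ell-j}(Z_0) \;=\; \gamma^{\ell+j}\,\left(\frac{\kappa_{n-1}}{\omega_n}\right)^{\ell+j}\,\kappa_{\ell+j}\,\EE V_{\ell+j}(Z_0).
\]

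\textbf{Finishing.} Dividing the second display by the first, the factor $\EE V_{\ell+j}(Z_0)$ cancels and a straightforward simplification of the prefactors — the powers of $\gamma$ subtract to $\gamma^j$, the powers of $\kappa_{n-1}/\omega_n$ subtract to the $j$-th power, the ratio $\kappa_{\ell+j}/(\kappa_{\ell+j}/\kappa_j)=\kappa_j$ appears, and the binomial moves to the denominator — gives exactly
\[
\EE f_{n-\ell-j}(Z_0) \;=\; \gamma^j\,\binom{\ell+j}{\ell}^{-1}\,\left(\frac{\kappa_{n-1}}{\omega_n}\right)^j\,\kappa_j\,\EE F_{n-\ell;j}(Z_0),
\]
as required. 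The range $\ell\in\{0,\ldots,n\}$, $j\in\{0,\ldots,n-\ell\}$ is precisely the range of validity for both invocations of Theorem \ref{thm:VereinheitnlichungSchneiderWieacker}.

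\textbf{Where (minor) care is needed.} The only point worth checking is that one is allowed to divide, i.e.\ that $\EE V_{\ell+j}(Z_0)>0$. Since $Z_0$ is almost surely an $n$-dimensional polytope with positive volume and contains a non-degenerate Euclidean ball with positive probability (the Poisson hyperplane process is locally finite), all intrinsic volumes $V_k(Z_0)$, $k\in\{0,\ldots,n\}$, are strictly positive on an event of positive probability, so $\EE V_{\ell+j}(Z_0)>0$. Alternatively, one can avoid any division at all and simply observe that both sides of the claimed identity coincide as linear expressions in the common quantity $\EE V_{\ell+j}(Z_0)$, which makes the corollary a purely formal algebraic consequence of Theorem \ref{thm:VereinheitnlichungSchneiderWieacker}. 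There is no real obstacle here.
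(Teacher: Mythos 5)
Your proposal is correct and is essentially the paper's own argument: the paper proves the corollary by comparing Schneider's formula \eqref{eq:AllgemeineSchneiderformel} at level $\ell+j$ (which is Theorem \ref{thm:VereinheitnlichungSchneiderWieacker} with $j=0$) with the general case of Theorem \ref{thm:VereinheitnlichungSchneiderWieacker}, exactly as you do, and the elimination of $\EE V_{\ell+j}(Z_0)$ is unproblematic since this quantity is finite and positive.
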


Theorem \ref{thm:VereinheitnlichungSchneiderWieacker}  allows us to consider lower-dimensional weighted faces of the tessellation, which are the natural lower-dimensional analogues of the zero cell $Z_0$. For $m\in\{1,\ldots,n-1\}$ we denote by $Z_m$ the $m$-volume weighted typical $m$-face of the rotation and translation invariant tessellation induced by $X$, cf.\ \cite{Schneider09} and \cite{SW} for precise definitions. If $h\geq 0$ is a measurable function on the space of polytopes, a special case of Theorem 1 in \cite{Schneider09} implies that
\begin{equation}\label{eq:lSeiteSchnittdarstellung}
\EE h(Z_m) = \int\limits_{G(n,m)}\EE h(Z_0\cap L)\,\nu_m(\dint L)\,.
\end{equation}
Identity \eqref{eq:lSeiteSchnittdarstellung} can also be used as a definition of (the distribution of) $Z_m$. Similar to the case of the zero cell considered above, we now relate $\EE F_{m-j;i}(Z_m)$ to other parameters of $Z_m$.

\begin{corollary}\label{cor:NiederdimensionaleSeitenF}
If $m\in\{1,\ldots,n\}$, $j\in\{0,\ldots,m\}$ and $i\in\{0,\ldots,m-j\}$, then
\begin{equation*}
\begin{split}
\EE F_{m-j;i}(Z_m) &= \binom{i+j}{ j}\,\left(\frac{\kappa_{n-1}}{\o_n}\right)^j\,\frac{\kappa_{i+j}}{\kappa_i}\,\g^j\,\EE V_{i+j}(Z_m).
\end{split}
\end{equation*}
\end{corollary}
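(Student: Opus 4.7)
The plan is to apply Schneider's intersection formula \eqref{eq:lSeiteSchnittdarstellung} twice. First, with $h = F_{m-j;i}$, it gives
\begin{equation*}
\EE F_{m-j;i}(Z_m) = \int_{G(n,m)}\EE F_{m-j;i}(Z_0 \cap L)\,\nu_m(\dint L).
\end{equation*}
The key observation is that for each fixed $L\in G(n,m)$, since $r=1$, the restricted hyperplane process $X_L=\{H\cap L:H\in X,\,H\not\supset L\}$ is itself a rotation and translation invariant Poisson hyperplane process in $L$, and $Z_0\cap L$ (which contains the origin because $0\in L$) is distributed as the zero cell of $X_L$. I can therefore apply Theorem~\ref{thm:VereinheitnlichungSchneiderWieacker} inside $L\cong\RR^m$ to the polytope $Z_0\cap L$, and then integrate back over $G(n,m)$ by applying \eqref{eq:lSeiteSchnittdarstellung} a second time with $h=V_{i+j}$.

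The main technical step is determining the intensity $\gamma_L$ of $X_L$. For $u\in\SS^{n-1}$ with $u\notin L^\perp$, write $u=\cos\theta\,v+\sin\theta\,w$ with $v\in\SS_L$, $w\in\SS_{L^\perp}$ and $\theta\in[0,\pi/2)$. Then $H(u,t)\cap L$ is the hyperplane in $L$ with unit normal $v$ at signed distance $t/\cos\theta$ from the origin. Plugging this parametrisation into \eqref{def:Theta} with $r=1$, the substitution $t'=t/\cos\theta$ contributes a Jacobian $\cos\theta$ which combines with the factor $(\cos\theta)^{m-1}(\sin\theta)^{n-m-1}$ from the spherical decomposition of $\cH^{n-1}$. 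Evaluating the resulting $\theta$-integral by means of \eqref{eqGamma} yields that the intensity measure of $X_L$ has the form \eqref{def:Theta} with $r=1$ on $L$, and intensity
\begin{equation*}
\gamma_L=\gamma\,\frac{\omega_m\,\omega_{n+1}}{\omega_n\,\omega_{m+1}}=\gamma\,\frac{\omega_m\,\kappa_{n-1}}{\omega_n\,\kappa_{m-1}},
\end{equation*}
where the second equality follows from the identity $\omega_{n+1}/\omega_{m+1}=\kappa_{n-1}/\kappa_{m-1}$, an immediate consequence of \eqref{eq:OmegaKappaExakt}.

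Applying Theorem~\ref{thm:VereinheitnlichungSchneiderWieacker} inside $L$ to $Z_0\cap L$ now yields
\begin{equation*}
\EE F_{m-j;i}(Z_0\cap L)=\gamma_L^{\,j}\binom{i+j}{j}\left(\frac{\kappa_{m-1}}{\omega_m}\right)^{\!j}\frac{\kappa_{i+j}}{\kappa_i}\,\EE V_{i+j}(Z_0\cap L),
\end{equation*}
and after substituting the value of $\gamma_L$ the factors $(\kappa_{m-1}/\omega_m)^j$ cancel exactly, producing a constant independent of $L$. Integrating over $L\in G(n,m)$ and applying \eqref{eq:lSeiteSchnittdarstellung} one last time with $h=V_{i+j}$ delivers the claim. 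The main obstacle is the identification of $X_L$ together with the computation of $\gamma_L$; the argument hinges on $r=1$, both because \eqref{eq:lSeiteSchnittdarstellung} and Theorem~\ref{thm:VereinheitnlichungSchneiderWieacker} are formulated in the translation invariant setting and because only the choice $|t|^{r-1}=1$ allows the substitution $t'=t/\cos\theta$ to be absorbed, so that the induced measure on $L$ is again of the form \eqref{def:Theta} with $r=1$.
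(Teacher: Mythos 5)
Your proposal is correct and takes essentially the same route as the paper: apply \eqref{eq:lSeiteSchnittdarstellung} with $h=F_{m-j;i}$, identify $Z_0\cap L$ as the zero cell of the sectional process in $L$ with intensity $\gamma_L=\gamma\,\omega_m\omega_{n+1}/(\omega_n\omega_{m+1})$ (the paper simply quotes Proposition \ref{prop:IntensityMeasureSectionWithPlane} with $r=1$, which is exactly the spherical-decomposition computation you sketch), apply Theorem \ref{thm:VereinheitnlichungSchneiderWieacker} inside $L$, cancel the $(\kappa_{m-1}/\omega_m)^j$ factors, and integrate back over $G(n,m)$. Only your closing side remark is slightly off: the sectional intensity measure is of the form \eqref{def:Theta} for \emph{every} $r>0$ (that is the content of Proposition \ref{prop:IntensityMeasureSectionWithPlane}); the restriction to $r=1$ is needed because \eqref{eq:lSeiteSchnittdarstellung}, the notion of $Z_m$ and Theorem \ref{thm:VereinheitnlichungSchneiderWieacker} require translation invariance, not because the substitution $t'=t/\cos\theta$ fails otherwise -- this does not affect the validity of your proof.
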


In contrast to the general case, for $r=1$ one of our identities extends to higher-order moments. Namely, we are able to deduce a general relation between the number of facets $f_{m-1}(Z_m)$ and the first intrinsic volume $V_1(Z_m)$ of the $m$-volume weighted typical $m$-face $Z_m$. It seems to be a challenging task to extend this to other functionals.

\begin{theorem}\label{thm:HoehereMomente}
If $m\in\{1,\ldots,n\}$ and $k\in\N$, then
$$\EE V_1^k(Z_m)=\left(\frac{1}{{2\gamma}}\,\frac{\o_n}{ \kappa_{n-1}}\right)^k\,\sum_{q=1}^k\begin{bmatrix}k\\ q\end{bmatrix}\,\EE f_{m-1}^q(Z_m)\,,$$
where {\small $\begin{bmatrix}k\\ q\end{bmatrix}$} are the Stirling numbers of the first kind.
\end{theorem}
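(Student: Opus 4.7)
The plan is to first reduce the statement to the case $m=n$ via the section representation \eqref{eq:lSeiteSchnittdarstellung}. Because $X$ is stationary and isotropic for $r=1$, the mapping theorem implies that, conditionally on $L\in G(n,m)$, the trace process $\{H\cap L:H\in X,\,H\cap L\neq\emptyset\}$ is again a stationary and isotropic Poisson hyperplane process in $L$. Comparing the measures of hyperplanes meeting an arbitrary body $K\subset L$ identifies its intensity as $\gamma_L=\gamma\,\omega_m\kappa_{n-1}/(\omega_n\kappa_{m-1})$, and a short computation shows $\omega_m/(2\gamma_L\kappa_{m-1})=\omega_n/(2\gamma\kappa_{n-1})$, so the constant $c$ in the claimed identity is preserved under sectioning. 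It therefore suffices to prove the theorem for the zero cell $Z_0$ of a stationary and isotropic Poisson hyperplane process in $\RR^n$ with intensity $\gamma$.

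In this reduced setting I set $\Lambda(K):=\Theta(\{H\in A(n,n-1):H\cap K\neq\emptyset\})$; integrating in $t$ in \eqref{def:Theta} with $r=1$ yields $\Lambda(K)=(2\gamma\kappa_{n-1}/\omega_n)\,V_1(K)=V_1(K)/c$ for every convex body $K\ni 0$. The classical expansion
\[
x^{\overline{k}}=x(x+1)\cdots(x+k-1)=\sum_{q=1}^{k}\begin{bmatrix}k\\q\end{bmatrix}x^{q}
\]
of the rising factorial in unsigned Stirling numbers of the first kind then recasts the theorem as the equivalent, more tractable identity $\EE\Lambda(Z_0)^k=\EE[f_{n-1}(Z_0)^{\overline{k}}]$.

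To prove this, I would introduce an independent copy $X'$ of $X$ on the same probability space and consider $N'(K):=|\{H'\in X':H'\cap K\neq\emptyset\}|$. Since $X'$ is independent of $Z_0=Z_0(X)$, conditionally on $Z_0$ the count $N'(Z_0)$ is Poisson with mean $\Lambda(Z_0)$; the Poisson factorial-moment formula gives $\EE[N'(Z_0)^{\underline{k}}\mid Z_0]=\Lambda(Z_0)^k$ and hence, after integration, $\EE\Lambda(Z_0)^k=\EE N'(Z_0)^{\underline{k}}$. The theorem is thereby reduced to the combinatorial identity $\EE N'(Z_0)^{\underline{k}}=\EE[f_{n-1}(Z_0)^{\overline{k}}]$. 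Both sides admit multivariate Slivnyak--Mecke representations as $k$-fold integrals against $\Theta^{\otimes k}$: on the left-hand side, the probability that $k$ deterministic auxiliary hyperplanes all meet $Z_0(X)$; on the right-hand side, after using $x^{\overline{k}}=\sum_j L(k,j)\,x^{\underline{j}}$ in Lah numbers $L(k,j)$ and applying multivariate Slivnyak--Mecke to $X$, a weighted sum over $j\leq k$ of probabilities that $j$ added hyperplanes are simultaneously facets of the augmented zero cell.

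The main obstacle is precisely the matching of these two families of integrands, since they reflect the conceptual distinction between hyperplanes that merely meet $Z_0$ and those that actually bound it in the tessellation. I expect the cleanest route to be an induction on $k$ based on the $k=1$ case (Corollary \ref{cor:NiederdimensionaleSeitenF}) coupled with a Palm/Slivnyak--Mecke relation for the facet process, or equivalently the striking distributional identity $\EE\exp(t\Lambda(Z_0))=\EE(1-t)^{-f_{n-1}(Z_0)}$, which in the baby case $n=1$ reduces to the familiar fact that $V_1(Z_0)$ is the sum of two independent exponentials while $f_0(Z_0)=2$ almost surely.
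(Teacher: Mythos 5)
Your chain of reductions is correct as far as it goes: the sectioning argument (with the verification that $\o_m/(2\gamma_L\k_{m-1})=\o_n/(2\gamma\k_{n-1})$) legitimately reduces the claim to $m=n$; the identification $\Lambda(K)=\tfrac{2\gamma\k_{n-1}}{\o_n}V_1(K)$ is right; the Stirling-number expansion of the rising factorial and the Poissonization via an independent copy $X'$ correctly recast the theorem as
\begin{equation*}
\EE\bigl[N'(Z_0)^{\underline{k}}\bigr]=\EE\bigl[f_{n-1}(Z_0)^{\overline{k}}\bigr],
\end{equation*}
equivalently as the statement that, conditionally on $f_{n-1}(Z_0)=i$, the hitting functional $\Lambda(Z_0)$ has the moments of a $\mathrm{Gamma}(i,1)$ distribution. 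But this last identity is not a reformulation one can wave through --- it \emph{is} the theorem. It is exactly the content of Theorem~6.1 of Baumstark and Last \cite{BL}, which the paper invokes at this point: given $f_{m-1}(Z_m)=i$, the quantity $\tfrac{2\k_{n-1}}{\o_n}V_1(Z_m)$ is $\mathrm{Gamma}(i,\gamma)$-distributed, whence $\EE[\psi(Z_m)^k\mid f_{m-1}(Z_m)=i]=\gamma^{-k}\,i^{\overline{k}}$ and the Stirling expansion finishes the proof. Your proposal stops precisely where the real work begins: you name the obstacle (matching the Mecke integrands for ``hyperplanes meeting $Z_0$'' against those for ``hyperplanes bounding facets of the augmented zero cell'') and then only \emph{conjecture} two possible resolutions (``induction on $k$'', or the generating-function identity $\EE e^{t\Lambda(Z_0)}=\EE(1-t)^{-f_{n-1}(Z_0)}$) without carrying either out.

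Neither suggested route is routine. The $k=1$ case is just the Efron--Schneider identity $\EE\Lambda(Z_0)=\EE f_{n-1}(Z_0)$, but the inductive step from $k$ to $k+1$ requires controlling the joint law of $Z_0$ and the event that $k$ prescribed hyperplanes all meet it --- adding those hyperplanes to $X$ shrinks the zero cell and forces each of them to be a facet of the new cell, so the two families of integrands do not match termwise; resolving this size bias is the substance of the Baumstark--Last argument. Likewise, the identity $\EE e^{t\Lambda(Z_0)}=\EE(1-t)^{-f_{n-1}(Z_0)}$ is equivalent to the full moment statement you are trying to prove, not a stepping stone towards it. So the proof is incomplete: either supply a proof of the conditional Gamma law (or of the factorial-moment identity above), or cite \cite{BL} for it as the paper does.
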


\subsection{Sections with subspaces}\label{subsec:PlanarSections}

In the previous subsection, we have seen that for $r=1$ functionals of the $m$-volume weighted typical $m$-face $Z_m$ can be calculated as rotational means of sections of the zero cell with linear subspaces, recall \eqref{eq:lSeiteSchnittdarstellung}. Let us say that $Z_m$ has direction $L\in G(n,m)$ if $Z_m$ is contained in an $m$-dimensional affine subspace parallel to $L$. In \cite{HugSchneider11} the notion of the $m$-volume weighted typical $m$-face $Z_m^{(L)}$ with given direction $L\in G(n,m)$ has been introduced. One can think of the distribution of $Z_m^{(L)}$ as the conditional distribution of $Z_m$, given the direction of $Z_m$ is $L$ (one has to be careful with this interpretation, because the latter event has probability zero). Having this in mind, one can rephrase relation \eqref{eq:lSeiteSchnittdarstellung} by saying that $Z_0\cap L$ has the same distribution as $Z_m^{(L)}$ for $\nu_m$-almost all $L\in G(n,m)$.

For $r\neq 1$, there is no meaningful notion of lower-dimensional typical faces, because of the lack of translation invariance. However, in view of the discussion for $r=1$ above, we consider for general $r>0$ sections of the zero cell $Z_0$ with an $m$-dimensional subspace $L\in G(n,m)$ as analogues of the $m$-volume weighted typical $m$-face $Z_m^{(L)}$ with given direction $L$. On the other hand, one can think of $Z_m^{(L)}$ as the zero cell of the sectional tessellation $X\cap L$, which is defined as
$$X\cap L=\sum_{H\in X,H\cap L\neq\emptyset}\delta_{H\cap L}\,.$$
Since $X\cap L$ is a Poisson process of $(m-1)$-dimensional hyperplanes within $L$, it is characterized by its intensity measure $\Theta_{ L}$. The next result expresses $\Theta_{ L}$ in terms of the underlying hyperplane process $X$ and $L$.

\begin{proposition}\label{prop:IntensityMeasureSectionWithPlane}
Let $m\in\{1,\ldots,n-1\}$ and $L \in G(n,m)$. Then,
\[
\Theta_{ L}(\,\cdot\,)=
\frac{\gamma_m }{\o_m }\int\limits_{\SS_L}\int\limits_{\RR} \mathbf{1}\big\{t\, u + (u^\bot \cap L)\in\,\cdot\,\big\}\, |t|^{r-1} \, \dint t\, \cH^{m-1}(\dint u)
\]
with $\gamma_m=\frac{\o_m\o_{n+r}}{ \omega_n\omega_{m+r}}\,\gamma$.
\end{proposition}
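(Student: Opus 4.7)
\medskip

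The plan is to compute the intensity measure of $X \cap L$ directly from the Campbell-type identity
\[
\Theta_L(A) = \EE\sum_{H\in X}\mathbf{1}\{H\cap L \in A,\, \dim(H\cap L)=m-1\} = \int_{A(n,n-1)}\mathbf{1}\{H\cap L \in A\}\,\Theta(\dint H),
\]
valid because $\{u \in \SS^{n-1} : u \in L^\perp\}$ is $\cH^{n-1}$-negligible and hence, $\Theta$-almost every hyperplane meets $L$ transversally. Substituting the explicit form \eqref{def:Theta} gives a double integral over $(u,t) \in \SS^{n-1}\times\RR$, and the task is to convert it into the analogous double integral over $\SS_L \times \RR$.

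The key change of variables is the standard join-decomposition of the sphere: every $u\in\SS^{n-1}$ with $u\notin L\cup L^\perp$ can be written uniquely as $u = \cos\varphi\cdot v + \sin\varphi\cdot w$ with $v\in\SS_L$, $w\in\SS_{L^\perp}$ and $\varphi\in(0,\pi/2)$, and
\[
\cH^{n-1}(\dint u) = \cos^{m-1}\varphi\,\sin^{n-m-1}\varphi\,\dint\varphi\,\cH^{m-1}(\dint v)\,\cH^{n-m-1}(\dint w).
\]
Since $u|L = \cos\varphi\cdot v$ with $v\in\SS_L$, the relation $\langle x,u\rangle = t$ with $x\in L$ reads $\langle x,v\rangle = t/\cos\varphi$, so $H(u,t)\cap L = \{x\in L : \langle x,v\rangle = s\} = s v+(v^\perp\cap L)$ for the new radial variable $s=t/\cos\varphi$. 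The substitution contributes $\dint t = \cos\varphi\,\dint s$ and $|t|^{r-1} = |s|^{r-1}\cos^{r-1}\varphi$, producing an overall factor $\cos^{r+m-1}\varphi\,\sin^{n-m-1}\varphi$ in the $\varphi$-integrand.

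The integrand is then independent of $w$ and of $\varphi$, so both variables may be integrated out separately. The $w$-integration yields a factor $\omega_{n-m} = \cH^{n-m-1}(\SS_{L^\perp})$, while the $\varphi$-integration becomes
\[
\int_0^{\pi/2}\cos^{r+m-1}\varphi\,\sin^{n-m-1}\varphi\,\dint\varphi = \frac{\omega_{n+r}}{\omega_{n-m}\omega_{m+r}},
\]
by \eqref{eqGamma} with $\alpha = n-m-1$ and $\beta = r+m-1$. Collecting the prefactor $\gamma/\omega_n$ together with the two constants one obtains exactly $\gamma\omega_{n+r}/(\omega_n\omega_{m+r}) = \gamma_m/\omega_m$, which is the asserted normalization.

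The step I expect to be fiddliest is keeping the change-of-variables bookkeeping straight: the beta-type Jacobian on $\SS^{n-1}$, the additional $\cos^{r}\varphi$ picked up from $|t|^{r-1}\dint t$, and the verification that the resulting exponents in $\cos\varphi$ and $\sin\varphi$ slot into \eqref{eqGamma} so as to cancel $\omega_{n-m}$ and leave the clean ratio $\omega_{n+r}/(\omega_n\omega_{m+r})$. Once this is done, recognising $sv+(v^\perp\cap L)$ as the general $(m-1)$-flat in $L$ shows that the resulting integral is precisely the claimed representation of $\Theta_L$.
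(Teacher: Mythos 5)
Your proposal is correct and follows essentially the same route as the paper: the same join decomposition $u=\cos\varphi\,v+\sin\varphi\,w$ of $\SS^{n-1}$ with Jacobian $(\cos\varphi)^{m-1}(\sin\varphi)^{n-m-1}$, the same identification $H(u,t)\cap L=(t/\cos\varphi)v+(v^\perp\cap L)$, and the same use of \eqref{eqGamma} to produce the factor $\o_{n+r}/(\o_n\o_{m+r})$. The bookkeeping (extra $\cos^r\varphi$ from $|t|^{r-1}\,\dint t$, cancellation of $\o_{n-m}$) checks out, so no gaps.
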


Proposition \ref{prop:IntensityMeasureSectionWithPlane} allows us to translate all results derived in Section \ref{subsec:GeneralResults} for the zero cell $Z_0$ to intersections of $Z_0$ with a fixed subspace $L\in G(n,m)$, since $Z_0\cap L$ is the zero cell of the sectional tessellation $X\cap L$. The only difference is that the parameter $\gamma$ has to be replaced by $\gamma_m$. For example,  Theorem \ref{thm:ExpectFvector} implies that for all $m\in\{1,\ldots,n-1\}$, $\ell\in\{1,\ldots,m\}$ and $L\in G(n,m)$,
$$\EE f_{m-\ell}(Z_0\cap L)=c_r(m,\ell)\,\gamma_m^\ell\,\EE\tW_{\ell(1-r)}^E(Z_0\cap L|E)\,,$$
where $E={\rm span}\{e_1,\ldots,e_\ell\}$ and $c_r(m,\ell)$ is the constant from Theorem \ref{thm:ExpectFvector}. Moreover,
%\begin{equation}\label{eq:VertexNumberSection}
$$
\EE f_0(Z_0\cap L)={\frac{\k_m}{ r}}\left(\frac{r}{2}\,\frac{\o_{r+1}}{ \omega_{r+m}}\right)^m\,c_r(m)
$$
%\end{equation}
with $c_r(m)$ as in Corollary \ref{VerticeNumberFormula}. For the constant $c_1(m)$ we get
\begin{equation*}
c_1(m) = m!\,\kappa_{m-1}^m\,\kappa_m
\end{equation*}
as a consequence of \cite[Theorem 8.2.2]{SW}. Thus, also $\EE f_0(Z_m)=m!\,2^{-m}\,\kappa_m^2$, a value known from \cite[Theorem 2]{Schneider10}. Similarly, the result of Theorem \ref{thm:Skelett} transfers to the $\ell$-skeleton of $Z_0\cap L$.

\medspace

To prepare for the results in Section \ref{subsec:asymptotikundhyperplane}, we need bounds for higher moments of the volume of $Z_0\cap L$. The following is a re-formulation of Proposition 1 in \cite{HoHu} with $Z_0$ replaced by $Z_0\cap L$ and $\gamma$ by $\gamma_m$.

\begin{proposition}\label{prop:BoundsMomentsVolmeSections}
Let $m\in\{1,\ldots,n-1\}$, $k\in\NN$ and $L \in G(n,m)$. Then,
$$\Gamma\left(\frac{m}{r}+1\right)^k \kappa_m^k\left(r\,\frac{\o_n \o_{r+1}}{2\gamma \o_{n+r}}\right)^{\frac{km}{r}}
\leq\mathbb{E}V_m^k(Z_0 \cap L)
\leq\Gamma\left(\frac{km}{r}+1\right)\kappa_m^k\left(r\,\frac{\o_n \o_{r+1}}{2\gamma \o_{n+r}}\right)^{\frac{km}{r}}\,.$$
In particular, for $k=1$ we have the exact formula
$$\mathbb{E}V_m(Z_0 \cap L)=\Gamma\left(\frac{m}{r}+1\right)\kappa_m\left(r\,\frac{\o_n \o_{r+1}}{2\gamma \o_{n+r}}\right)^{\frac{m}{r}}.$$
\end{proposition}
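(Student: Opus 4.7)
The plan is to realize that $Z_0\cap L$ is itself the zero cell of a Poisson hyperplane process \emph{intrinsically inside} $L$, and then to apply the known moment bounds for zero cells (\cite[Proposition 1]{HoHu}) in the lower-dimensional space $L$.

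First I would observe that if $H\in X$ misses $L$, it contributes nothing to the intersection, while if $H\cap L\neq\emptyset$, then $H^-\cap L$ is precisely the half-space in $L$ determined by the $(m-1)$-flat $H\cap L$ that contains the origin. Consequently
$$Z_0\cap L \;=\; \bigcap_{H'\in X\cap L}(H')^-,$$
so that $Z_0\cap L$ is, as a subset of $L\cong\RR^m$, the zero cell of the sectional Poisson process $X\cap L$. By Proposition \ref{prop:IntensityMeasureSectionWithPlane}, the intensity measure $\Theta_L$ of $X\cap L$ has exactly the parametric form \eqref{def:Theta}, but with ambient dimension $m$ in place of $n$, the same distance exponent $r$, and intensity $\gamma_m=\frac{\o_m\o_{n+r}}{\o_n\o_{m+r}}\,\gamma$ in place of $\gamma$. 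In particular $X\cap L$ is isotropic in $L$, so it falls into the class of models treated in \cite{HoHu}.

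Next I would apply \cite[Proposition 1]{HoHu} with ambient dimension $m$ and intensity $\gamma_m$. That result asserts exactly the bounds we are after, but with $(n,\gamma)$ replaced by $(m,\gamma_m)$; it therefore yields
$$\Gamma\!\left(\tfrac{m}{r}+1\right)^{k}\kappa_m^k\left(r\,\frac{\o_m \o_{r+1}}{2\gamma_m \o_{m+r}}\right)^{\frac{km}{r}}\le \EE V_m^k(Z_0\cap L)\le \Gamma\!\left(\tfrac{km}{r}+1\right)\kappa_m^k\left(r\,\frac{\o_m \o_{r+1}}{2\gamma_m \o_{m+r}}\right)^{\frac{km}{r}},$$
with equality throughout for $k=1$, again by \cite[Proposition 1]{HoHu}.

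Finally I would simplify the factor inside the $\frac{km}{r}$-th power. Substituting $\gamma_m=\frac{\o_m\o_{n+r}}{\o_n\o_{m+r}}\,\gamma$, the factors $\o_m$ and $\o_{m+r}$ cancel and one obtains
$$\frac{\o_m \o_{r+1}}{2\gamma_m \o_{m+r}} \;=\; \frac{\o_n \o_{r+1}}{2\gamma\, \o_{n+r}},$$
which is precisely the quantity appearing in the statement. This immediately produces both the upper and lower bounds, and the $k=1$ equality, as claimed. There is essentially no obstacle: once Proposition \ref{prop:IntensityMeasureSectionWithPlane} is available, the proof is a pure substitution argument, and the only thing to be mindful of is that the bounds of \cite{HoHu} must be applied \emph{within} $L$ (so that $V_m$ refers to Lebesgue measure on $L$, which coincides with $m$-dimensional Hausdorff measure used throughout the paper).
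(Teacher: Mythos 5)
Your argument is correct and matches the paper's own treatment: the proposition is obtained exactly as you describe, by viewing $Z_0\cap L$ as the zero cell of the sectional process $X\cap L$, invoking Proposition \ref{prop:IntensityMeasureSectionWithPlane} to identify its intensity $\gamma_m$, and then applying Proposition 1 of \cite{HoHu} in ambient dimension $m$, where the substitution $\gamma_m=\frac{\o_m\o_{n+r}}{\o_n\o_{m+r}}\gamma$ indeed reduces $\frac{\o_m\o_{r+1}}{2\gamma_m\o_{m+r}}$ to $\frac{\o_n\o_{r+1}}{2\gamma\o_{n+r}}$. No gaps; your computation of the constant is exactly the simplification the paper relies on.
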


\begin{remark}\rm Proposition \ref{prop:BoundsMomentsVolmeSections} continues to hold for $m=n$, in which case $Z_0\cap L$ has to be interpreted as $Z_0$ itself. In particular, the mean volume of $Z_0$ equals
\begin{equation}\label{eq:MeanVolumeZ0}
\mathbb{E}V_n(Z_0)=\Gamma\left(\frac{n}{r}+1\right)\kappa_n\left(r\,\frac{\o_n \o_{r+1}}{2\gamma \o_{n+r}}\right)^{\frac{n}{r}},
\end{equation}
see Proposition 1 in \cite{HoHu}.
\end{remark}

To rephrase bounds for the variance $\var(V_m(Z_0 \cap L))$ of $V_m(Z_0\cap L)$, we need auxiliary notation and results from \cite[Section 4]{HoHu}. Firstly, we need the quantity $E(m,r)$ which is defined for $m\in \NN$ and $r>0$ as an involved multiple integral, see \cite{HoHu}. Instead of stating its definition we recall upper and lower bounds derived in \cite[Lemma 5]{HoHu}, which are the only information about $E(m,r)$ needed later. Namely, if $r>0$ and $m\geq 3$, then
\begin{align}
&c\,\frac{\left(1+\frac{r}{m}\right)^{-\frac{1}{2}}}{\sqrt{r+1}}\,2^{\frac{m}{2}}\,\left(1+\frac{r}{2m}\right)^{-\frac{m}{2}}
\left(1+\frac{m}{m+r}\right)^{-\frac{m+r}{2}}\nonumber\\
&\qquad\qquad \leq
E(m,r)\leq C \,\frac{(1+\frac{r}{m})}{\sqrt{r+1}} \,2^{\frac{m}{2}}\, \left(1+\frac{r}{2m}\right)^{-\frac{m}{2}}\left(1+\frac{m}{m+r}\right)^{-\frac{m+r}{2}}\label{BoundForE}
\end{align}
with constants $c, C>0$ which  are independent of $r$ and $m$.
For integers $0<m<n$ we also introduce the quantity $D(n,m,r)$ by
\begin{equation}\label{AuxiliaryConstantD(n,m,r)}
D(n,m,r) = \frac{m\kappa_m^2}{r}\, \Gamma\left(\frac{2m}{r}+1\right)\,\left(\frac{r}{4\gamma}\frac{ \o_n\o_{r+1}}{\o_{n+r} }\right)^{\frac{2m}{r}}.
\end{equation}
It corresponds to $D(m,r)$ in \cite[Section 4]{HoHu} with $\gamma$ replaced by $\gamma_m$ there. Now, the following bounds are direct consequences of Theorem 2 in \cite{HoHu}.

\begin{proposition}\label{prop:BoundsVarianceSections}
For $m\in\{1,\ldots,n-1\}$ and $L \in G(n,m)$ we have
$$D(n,m,r)\, E(m,r)\leq\var(V_m(Z_0 \cap L))\leq 4^{\frac{2m}{r}+1}\,D(n,m,r)\, E(m,r)\,. $$
\end{proposition}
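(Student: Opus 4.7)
The plan is to reduce the claim to Theorem 2 of \cite{HoHu}, which gives precisely these variance bounds for the zero cell of a Poisson hyperplane tessellation of the form \eqref{def:Theta} in the ambient space, using the sectional Poisson process and the transfer principle from Proposition \ref{prop:IntensityMeasureSectionWithPlane}.

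First, I would observe that $Z_0\cap L$ is almost surely equal to the zero cell of the sectional hyperplane process $X\cap L$ inside $L$. By Proposition \ref{prop:IntensityMeasureSectionWithPlane}, the process $X\cap L$ is itself a rotation invariant Poisson hyperplane process within the $m$-dimensional space $L$, with intensity measure of exactly the same parametric form as \eqref{def:Theta}, the only modifications being that the ambient dimension is now $m$ and the intensity parameter is $\gamma_m=\frac{\omega_m\omega_{n+r}}{\omega_n\omega_{m+r}}\gamma$ in place of $\gamma$. Consequently, Theorem 2 of \cite{HoHu}, applied in the $m$-dimensional space $L$ with intensity $\gamma_m$, yields
\begin{equation*}
D_L(m,r)\,E(m,r)\;\leq\;\var\bigl(V_m(Z_0\cap L)\bigr)\;\leq\; 4^{\frac{2m}{r}+1}\,D_L(m,r)\,E(m,r),
\end{equation*}
where $D_L(m,r)$ denotes the quantity $D(m,r)$ of \cite[Section~4]{HoHu}, but evaluated with $\gamma$ replaced by $\gamma_m$ and the dimension set to $m$.

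The remaining step is to verify that $D_L(m,r)$ coincides with the quantity $D(n,m,r)$ defined in \eqref{AuxiliaryConstantD(n,m,r)}. Substituting $\gamma_m=\frac{\omega_m\omega_{n+r}}{\omega_n\omega_{m+r}}\gamma$ into $\frac{r}{4\gamma_m}\frac{\omega_m\omega_{r+1}}{\omega_{m+r}}$, the factor $\omega_m\omega_{m+r}$ cancels, leaving $\frac{r}{4\gamma}\frac{\omega_n\omega_{r+1}}{\omega_{n+r}}$, which is exactly the base that appears in \eqref{AuxiliaryConstantD(n,m,r)} raised to the power $2m/r$. The prefactor $\frac{m\kappa_m^2}{r}\Gamma(\frac{2m}{r}+1)$ is unchanged by the substitution, and the claim follows.

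The main obstacle, such as it is, is purely bookkeeping: one must be careful that the ``dimensional parameter'' in the constants of \cite{HoHu} really is the dimension of the ambient space of the tessellation under consideration (here $m$, not $n$), while the angular correction $\omega_n\omega_{r+1}/\omega_{n+r}$ inherited from the ambient intensity $\gamma_m$ retains the full space dimension $n$. Once this correspondence is set up, the quoted bounds for $E(m,r)$ in \eqref{BoundForE} are not even needed for the proof itself; they only become relevant when the variance bounds are exploited in the subsequent asymptotic analysis.
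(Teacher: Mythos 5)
Your proposal is correct and follows exactly the paper's route: the paper also obtains this proposition as a direct consequence of Theorem 2 in \cite{HoHu}, applied to the sectional process $X\cap L$ in the $m$-dimensional space $L$ with intensity $\gamma_m$ from Proposition \ref{prop:IntensityMeasureSectionWithPlane}, noting that $D(n,m,r)$ is precisely the $D(m,r)$ of \cite{HoHu} with $\gamma$ replaced by $\gamma_m$. Your explicit verification that the substitution $\gamma\mapsto\gamma_m$ turns $\frac{r}{4\gamma_m}\frac{\omega_m\omega_{r+1}}{\omega_{m+r}}$ into $\frac{r}{4\gamma}\frac{\omega_n\omega_{r+1}}{\omega_{n+r}}$ is exactly the bookkeeping the paper leaves implicit.
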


\subsection{Relation to the hyperplane conjecture}\label{subsec:asymptotikundhyperplane}

Recall that the hyperplane conjecture asserts the existence of a universal constant $c > 0$ such that for any  convex body $K\subset\R^n$ of volume one there is a hyperplane $L \in A(n,n-1)$ with
$$V_{n-1}(K \cap L)> c\,.$$
In this section, we show that the hyperplane conjecture holds asymptotically almost surely for the suitably normalized zero cells generated by the hyperplane process $X$ with distance exponent $r=b\,n^\alpha$ where $b>0$ and $\alpha>1/2$ or $b>\sqrt{8}$ and $\a=1/2$. This implies in particular that the hyperplane conjecture holds asymptotically almost surely for the typical cell of a translation invariant Poisson-Voronoi tessellation. We start with the following consequence of Propositions \ref{prop:BoundsMomentsVolmeSections} and \ref{prop:BoundsVarianceSections}.

\begin{corollary}\label{cor:AsymptotikVolumeSection}
Let $r=b\, n^\alpha$ with $b>0$ and $\alpha\in\RR$. Let $Z_0$ be the zero cell of a Poisson hyperplane process with
distance exponent $r$ and intensity $\widehat{\gamma}(r,n)$ given by
\begin{equation}\label{eq:GammaHatAN}
\widehat{\gamma}(r,n) = \frac{r}{2} \frac{\o_n\o_{r+1}}{\o_{n+r}} \left({ \Gamma\left(\frac{n}{r}+1\right) \kappa_n  } \right)^{r/n}\,.
\end{equation}
Then $\EE V_n(Z_0)=1$ for all choices of $r$ and $n\geq 2$. Let $\ell\in\N_0$ be fixed and let $L \in G(n,n-\ell)$. Then as $n\to \infty$ we obtain that
\[
\lim\limits_{n\rightarrow \infty} \mathbb{E}V_{n-\ell}(Z_0\cap L) =\begin{cases}
 0&:\alpha<0 \text{ and }\ell >0 \\
e^{-\frac{\ell}{b}+\frac{\ell}{2}}&:\alpha=0\\
e^{\ell/2}&:\alpha>0\\
 \end{cases}
\]
and
\begin{align*}
\var(V_{n-\ell}(Z_0\cap L))\rightarrow\begin{cases}
\infty &:\alpha<0 \text{ and }\ell=0\\
\text{open} &:\alpha <0 \text{ and }\ell>0\\
\infty&:\alpha=0\\
\text{open} &:\alpha\in (0,1/2)\text{ or }\\
 &\;\;\alpha= 1/2 \text{ and } b\in (0,\sqrt{8}\,]\\
 0 &:\alpha= 1/2 \text{ and } b> \sqrt{8}\text{ or }\\
 &\;\;\alpha>1/2,
\end{cases}
\end{align*}
where `open' means that the behaviour cannot be deduced from Proposition \ref{prop:BoundsVarianceSections} since the lower bound converges to zero and the upper bound to $\infty$. Furthermore, the rate of convergence for $\a=1/2$ and $b>\sqrt{8}$ or $\alpha>1/2$ is given as follows:
\begin{itemize}
\item[{\rm (i)}]
If $\alpha=1/2$ and $b>\sqrt{8}$, then
\[
(2^{-b/2}-\epsilon)^{\sqrt{n}}\leq \var(V_{n-\ell}(Z_0\cap L))\leq (2^{4/b-b/2}+\epsilon)^{\sqrt{n}}\,,\]
for every $\epsilon\in (0,2^{-b/2})$ and all $n\geq N_{\e}$, for some $N_{\e}\in\NN$.
\item[{\rm (ii)}]
If $1/2<\alpha<1$, then
\[
(2^{-b/2}-\epsilon)^{n^\alpha}\leq \var(V_{n-\ell}(Z_0\cap L))\leq (2^{-b/2}+\epsilon)^{n^\alpha}\,,\]
for every $\epsilon\in (0,2^{-b/2})$ and all $n\geq N_{\e}$, for some $N_{\e}\in\NN$.
\item[{\rm (iii)}]
If $\alpha=1$, then
\[
c_1\frac{1}{\sqrt{n}}\left( \frac{4(b+1)^{b+1}}{(b+2)^{b+2}}\right)^{n/2}\leq\var(V_{n-\ell}(Z_0\cap L))\leq c_2\frac{1}{\sqrt{n}}\left( \frac{4(b+1)^{b+1}}{(b+2)^{b+2}}\right)^{n/2}
\]
with constants $c_1,c_2>0$ not depending on $n$.
\item[{\rm (iv)}]
If $\alpha>1$, then
\[
\left(c_1 n^{1-\alpha}\right)^{n/2}\leq \var(V_{n-\ell}(Z_0\cap L))\leq \left(c_2 n^{1-\alpha}\right)^{n/2}
\]
with constants $c_1,c_2>0$ not depending on $n$.
\end{itemize}
\end{corollary}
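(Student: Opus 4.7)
The proof is essentially an exercise in substituting the intensity $\widehat\gamma(r,n)$ into the exact formula and bounds from Propositions \ref{prop:BoundsMomentsVolmeSections} and \ref{prop:BoundsVarianceSections}, followed by a careful Stirling analysis of the resulting $\Gamma$-function expressions. I will begin with the easy identity $\EE V_n(Z_0)=1$: plugging the definition of $\widehat\gamma(r,n)$ into \eqref{eq:MeanVolumeZ0} gives
$r\,\frac{\o_n\o_{r+1}}{2\widehat\gamma(r,n)\o_{n+r}} = [\Gamma(n/r+1)\kappa_n]^{-r/n},$
and raising this to the power $n/r$ exactly cancels the prefactor $\Gamma(n/r+1)\kappa_n$.

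The same substitution reduces the exact first-moment formula of Proposition \ref{prop:BoundsMomentsVolmeSections} (taken with $m=n-\ell$, $k=1$) to
\[
\EE V_{n-\ell}(Z_0\cap L) = \Gamma\!\left(\tfrac{n-\ell}{r}+1\right)\kappa_{n-\ell}\,\bigl[\Gamma\!\left(\tfrac{n}{r}+1\right)\kappa_n\bigr]^{-\frac{n-\ell}{n}}.
\]
The $\kappa$-ratio $\kappa_{n-\ell}\,\kappa_n^{-(n-\ell)/n}$ can be handled once and for all by Stirling's formula: its logarithm equals $\frac{n-\ell}{2}\log\!\frac{n}{n-\ell}+o(1)$ and converges to $\ell/2$. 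The $\Gamma$-ratio $\Gamma(\frac{n-\ell}{r}+1)\,\Gamma(\frac{n}{r}+1)^{-(n-\ell)/n}$ then separates the cases: taking logarithms and expanding via Stirling leaves, to leading order, $\frac{n-\ell}{r}\log(1-\ell/n)$, which tends to $-\ell/b$ if $\alpha=0$, to $0$ if $\alpha>0$, and to $-\infty$ if $\alpha<0$ (so the Gamma-ratio vanishes for $\ell>0$). Multiplying the two ratios gives the three stated limits.

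For the variance, I will use Proposition \ref{prop:BoundsVarianceSections} with $m=n-\ell$ and rewrite $D(n,n-\ell,r)$ in terms of $\Gamma$-functions by the same substitution of $\widehat\gamma(r,n)$. After this, both the upper and lower bound become an explicit product of $\Gamma$-functions, powers of $2$ and $4$, the polynomial factor $(n-\ell)/r$, and the quantity $E(n-\ell,r)$ controlled via \eqref{BoundForE}. In each regime of $\alpha$ I take logarithms and apply Stirling to
\[
\frac{2(n-\ell)}{r}\log 2 + \log\Gamma\!\left(\tfrac{2(n-\ell)}{r}+1\right) - \frac{2(n-\ell)}{n}\log\Gamma\!\left(\tfrac{n}{r}+1\right),
\]
combined with the exponential factor coming from $E(n-\ell,r)$. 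In the sub-cases (i)--(iv) the dominant contribution collapses to the announced base ($2^{-b/2}$ for $\alpha=1/2$ from below, $2^{4/b-b/2}$ from above; $2^{-b/2}$ on both sides for $1/2<\alpha<1$; $4(b+1)^{b+1}/(b+2)^{b+2}$ for $\alpha=1$; and $c\,n^{1-\alpha}$ for $\alpha>1$), while the sub-exponential prefactors produce the $1/\sqrt n$ factor in (iii) and the polynomial growth in (iv). The regime $\alpha\le 0$ and the lower bound for $\ell=0$, $\alpha<0$, follow from the same manipulations once one observes that the $\Gamma$-quotient blows up.

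The main obstacle is purely technical: carefully subtracting two $\log\Gamma$-terms of comparable size without letting the Stirling remainders swamp the leading constants. The delicate point is case (i), where $b$ is only slightly above $\sqrt 8$ and the bases $2^{-b/2}$, $2^{4/b-b/2}$ are close to $1$, so one must keep the $\log n$-order corrections under explicit control in order to absorb them into the arbitrary $\varepsilon$ for all sufficiently large $n$; and case (iii), where the constant $4(b+1)^{b+1}/(b+2)^{b+2}$ emerges only after a precise cancellation between the $\Gamma$-ratios and the upper and lower bounds of $E(m,r)$ in \eqref{BoundForE}. Everything else is bookkeeping of powers of $2$, $\pi$ and $b$.
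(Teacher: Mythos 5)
Your proposal follows essentially the same route as the paper's proof: substitute $\widehat{\gamma}(r,n)$ into the exact mean formula of Proposition \ref{prop:BoundsMomentsVolmeSections} and into the variance bounds of Proposition \ref{prop:BoundsVarianceSections} with $m=n-\ell$, and then extract the asymptotics of the resulting $\Gamma$-ratios, of $D(n,n-\ell,r)$, of $E(n-\ell,r)$ via \eqref{BoundForE}, and of the factor $4^{2(n-\ell)/r+1}$ by Stirling's formula, case by case in $\alpha$. The only caveat is that for $\alpha\geq 1$ the arguments $n/r$ and $(n-\ell)/r$ remain bounded, so there one must use continuity of the gamma function instead of Stirling (as the paper does); this does not affect your conclusions.
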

Since the hyperplane conjecture refers to convex bodies with volume one, we define the normalized zero cell $\overline{Z}_0$ by $\overline{Z}_0 = (V_n(Z_0))^{-1/n}\,Z_0$.
Hence, for $L \in G(n,n-1)$, we have
\[
V_{n-1}(\overline{Z}_0\cap L)
 =(V_n(Z_0))^{-(n-1)/n}\,V_{n-1}(Z_0 \cap L)\,.
\]
We can now present the main result of this section.

\begin{theorem}\label{thm:HyperplaneConjecture}
Let $r = b\,n^\alpha$ with $b>0$ and $\alpha>1/2$ or $b>\sqrt{8}$ and $\a=1/2$, and let the intensity of the underlying hyperplane process $X$ be given as at \eqref{eq:GammaHatAN}. Then, for any $\varepsilon \in (0, \sqrt{e})$ and $L \in G(n,n-1)$ it holds that
\[
\mathbb{P}\big(V_{n-1}(\overline{Z}_0 \cap L) > \sqrt{e}-\varepsilon\big)\, \geq\,
1-  C\,\varepsilon^{-2}\, H(\alpha,b,n),
\]
where
\[
H(\alpha,b,n)=\begin{cases}
\left( 2^{2/b-b/4}\right)^{\sqrt{n}}&: \a=1/2 \text{ and }b>\sqrt{8}\\[0.2cm]
\left(2^{-b/4}\right)^{n^\alpha}&:1/2<\alpha<1\\[0.2cm]
 \frac{1}{\sqrt{n}} \left(\frac{4(b+1)^{b+1}}{(b+2)^{b+2}}\right)^{n/2}&:\alpha=1\vspace{0.2cm}\\
(C\,n^{1-\alpha})^{n/2}&:\alpha>1
\end{cases}
\]
 with $n\geq N_\varepsilon$, for some $N_\varepsilon\in\NN$, and with a constant $C > 0$ not depending on $n$.
 In particular, the hyperplane conjecture holds asymptotically almost surely for the normalized zero cell $\overline{Z}_0$ in the sense that
\[
\lim\limits_{n\rightarrow \infty}\mathbb{P}(V_{n-1}(\overline{Z}_0 \cap L) > \sqrt{e}-\varepsilon) = 1
\]
for all $\varepsilon \in (0, \sqrt{e})$ and $L \in G(n,n-1)$.
\end{theorem}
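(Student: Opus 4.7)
The strategy is a Chebyshev-type concentration argument combined with the moment asymptotics of Corollary \ref{cor:AsymptotikVolumeSection}. Since $\overline{Z}_0 = V_n(Z_0)^{-1/n} Z_0$, one has the multiplicative decomposition
$$V_{n-1}(\overline{Z}_0 \cap L) = V_n(Z_0)^{-(n-1)/n}\, V_{n-1}(Z_0 \cap L),$$
so it suffices to show that $V_n(Z_0)$ stays close to its mean, which equals $1$ exactly by the construction of $\widehat\gamma(r,n)$, and that $V_{n-1}(Z_0 \cap L)$ stays close to its own mean $\mu_n := \EE V_{n-1}(Z_0 \cap L)$, the latter converging to $e^{1/2} = \sqrt{e}$ by Corollary \ref{cor:AsymptotikVolumeSection} (applied with $\ell = 1$; note that the hypotheses of the theorem force $\alpha > 0$).

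Fix $\varepsilon \in (0,\sqrt{e})$ and set $\delta_1 = \varepsilon/(4\sqrt{e})$, $\delta_2 = \varepsilon/4$. Choose $N_\varepsilon$ large enough that $|\mu_n - \sqrt{e}|\leq \varepsilon/4$ for every $n \geq N_\varepsilon$. On the event
$$A = \{V_n(Z_0) \leq 1+\delta_1\}\cap\{V_{n-1}(Z_0 \cap L)\geq \mu_n - \delta_2\},$$
the elementary inequality $(1+\delta_1)^{-(n-1)/n} \geq (1+\delta_1)^{-1}$ together with the choice of $\delta_1, \delta_2$ yields $V_{n-1}(\overline{Z}_0 \cap L) \geq (1+\delta_1)^{-1}(\sqrt{e} - \varepsilon/2) > \sqrt{e} - \varepsilon$. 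Applying Chebyshev's inequality separately to each of the two factors and a union bound gives
$$\PP(A^c) \leq \delta_1^{-2}\,\var(V_n(Z_0)) + \delta_2^{-2}\,\var(V_{n-1}(Z_0 \cap L)) \leq C\,\varepsilon^{-2}\bigl(\var(V_n(Z_0))+\var(V_{n-1}(Z_0 \cap L))\bigr),$$
with an absolute constant $C$.

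It then remains to substitute the variance upper bounds of Corollary \ref{cor:AsymptotikVolumeSection}, applied once with $\ell=0$ (so that $V_{n-\ell}(Z_0 \cap L)$ reduces to $V_n(Z_0)$) and once with $\ell=1$. Direct inspection shows that in each of the four regimes (i)--(iv) the corollary's variance bound is precisely the square of the corresponding expression appearing in $H(\alpha,b,n)$; since $H \to 0$ one has $H^2 \leq H$ for $n$ large, so the Chebyshev estimate is dominated by $C'\varepsilon^{-2}H(\alpha,b,n)$, which is the required bound. The ``in particular'' assertion then follows by sending $n \to \infty$ for fixed $\varepsilon$. The main bookkeeping obstacle is the critical case $\alpha = 1/2$, $b > \sqrt{8}$: the exponent $2/b - b/4$ in $H$ is exactly half of the exponent $4/b - b/2$ in the variance bound, so the threshold $b > \sqrt{8}$ is essential for decay. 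Absorbing the $\varepsilon$-corrections that Corollary \ref{cor:AsymptotikVolumeSection} attaches to its variance bounds into a slightly enlarged $N_\varepsilon$ is otherwise routine.
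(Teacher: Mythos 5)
Your proposal is correct and follows essentially the same route as the paper's proof: the same multiplicative decomposition $V_{n-1}(\overline{Z}_0\cap L)=V_n(Z_0)^{-(n-1)/n}V_{n-1}(Z_0\cap L)$, Chebyshev's inequality applied separately to $V_n(Z_0)$ (mean $1$ by the choice of $\widehat{\gamma}(r,n)$) and to $V_{n-1}(Z_0\cap L)$ (mean converging to $\sqrt{e}$), a union bound, and substitution of the variance bounds of Corollary \ref{cor:AsymptotikVolumeSection} with $\ell=0$ and $\ell=1$. One small bookkeeping correction: the variance bound is (up to the $\epsilon$-perturbations) the square of the expression in $H(\alpha,b,n)$ only in the regimes $\alpha=1/2$, $b>\sqrt{8}$ and $1/2<\alpha<1$; for $\alpha=1$ and $\alpha>1$ the corollary's bound is already of the same order as $H$ itself, which makes the required domination $\var\le C'\,H$ even more immediate, so your conclusion stands unchanged.
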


\begin{remark}\rm
It is interesting to compare the result of the previous theorem with another approach to the hyperplane conjecture. Namely, it is known that if $K$ is a convex body in $\RR^n$ with unit volume, then
\begin{equation}\label{eq:HyperplaneAndLK}
\sup \limits_{L \in A(n,n-1)} V_{n-1}(K \cap L) \geq \frac{C}{L_K}\,,
\end{equation}
where $C>0$ is a universal constant and $L_K$ denotes the isotropic constant of $K$ (see \cite{KK}). Hence, if one could show the existence of a constant $c>0$ with $L_K \leq c$ for every convex body $K$ of unit volume and dimension $n$, the hyperplane conjecture would follow immediately. To connect \eqref{eq:HyperplaneAndLK} with our analysis from Section \ref{subsec:HighDimensions}, let us recall from \cite{AlonsoGutierrez.2010} that for polytopes $P \subset \mathbb{R}^n$ with volume $0<V_n(P)<\infty$ one has the relation
\begin{equation}\label{eq:KKRelation}
L_P \leq C'\,\sqrt{\frac{f_0(P)}{n}}
\end{equation}
for an absolute constant $C'>0$ not depending on $P$ or $n$. For distance exponents $r=b\,n^\a$ with $b>0$ and $\a>0$ and intensities given by \eqref{eq:GammaHatAN}, we can find -- as a consequence of Theorem \ref{thm:HighDimensionsVertices} -- constants $0<c_1,c_2<\infty$ such that
\[
 (c_1\; n^\a)^{n/2} \; \leq \; \mathbb{E}f_0(Z_0)=\mathbb{E}f_0(\overline{Z}_0) \; \leq \;
 (c_2\;n^\a)^{n/2}\,.
\]
This strongly suggests that the result of Theorem \ref{thm:HyperplaneConjecture} cannot be derived by combining \eqref{eq:HyperplaneAndLK} with \eqref{eq:KKRelation}.
\end{remark}

\begin{remark}\rm
So far, we have considered the case $r=b\,n^\alpha$ with $\alpha>1/2$ or $\a=1/2$ and $b>\sqrt{8}$ only. In fact, for $\alpha\leq 0$ our method, which is based on the use of Chebychev's inequality, does not lead to a result similar to that of Theorem \ref{thm:HyperplaneConjecture}. This is due to the fact that for $\alpha\leq 0$ the variance of $V_{n}(Z_0)$ tends to $\infty$ as $n\to\infty$. For $0<\alpha< 1/2$ or $\a=1/2$ and $b\leq \sqrt{8}$ the asymptotic behaviour of the variance is still open. Searching for a counterexample to the hyperplane conjecture, the case $\alpha<0$ seems to be most promising because then for every hyperplane $L\in G(n,n-1)$ the expected sectional volume $\EE V_{n-1}(Z_0\cap L)$ converges to zero.
\end{remark}

\subsection{High dimensional limits}\label{subsec:HighDimensions}

In this section we investigate the behaviour of the isoperimetric ratio and of the $f$-vector of the zero cells $Z_0$, as the space dimension $n$ tends to infinity.  To start with, let us define for a random convex body $K\subset\RR^n$ the isoperimetric ratio $\cI_n(K)$ of half the expected surface area and the expected volume by
$$\cI_n(K)={\frac{(\EE V_{n-1}(K))^{n/(n-1)}}{\EE V_n(K)}}.$$
In this context, we call a function $\varphi:\NN\to\RR$ a gauge function for a sequence of random convex bodies $K_n \subset \RR^n$ if
$$\lim_{n\to\infty}\varphi (n)\,\cI_n(K_n)=1\,.$$
For example, $\varphi(n)=\frac 1 {\sqrt {2\pi e}}\, n^{-1/2} $ defines a gauge function for the Euclidean unit ball, interpreted as the constant random convex body $K\equiv\BB^n$.

\begin{theorem}\label{thm:IsoperimetricRatio}
If the distance exponent is $r=b\, n^\alpha$ for some $b>0$ and $\a\in\RR$, then a gauge function for the zero cell $Z_0$ is
\begin{align*}
\varphi(n)=\begin{cases}
\frac{b}{2\sqrt{e}}\,n^{\a-1} \left(1-\frac{1}{n}\right)^{-n^{1-\a}}&:\a<0 \\[1.5ex]
e^{\frac{1}{b}-\frac{1}{2}}\frac{\o_b}{\o_{b+1}}\,n^{-1}&:\a=0\\[1.5ex]
\sqrt{\frac{b}{2\pi e}}\,n^{-(1-\a/2)}&:0<\a<1\\[1.5ex]
\sqrt{\frac{b}{(b+1)(2e\pi)}}\,n^{-1/2}&:\a=1
\vspace{0.3cm}\\[1ex]
\frac{1}{\sqrt{2\pi e}}\,n^{-1/2}&:\a>1\,.
\end{cases}
\end{align*}

\end{theorem}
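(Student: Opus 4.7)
The isoperimetric ratio $\cI_n(Z_0)$ is scale invariant (numerator and denominator both scale like $t^n$ under a dilation by $t$), so we are free to pick any value of the intensity $\gamma$ when evaluating it. The strategy is to write down exact closed-form expressions for both $\EE V_n(Z_0)$ and $\EE V_{n-1}(Z_0)$, combine them into a single expression for $\cI_n(Z_0)$, then carry out Stirling asymptotics regime by regime.

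\textbf{Step 1: exact formulae.} Formula \eqref{eq:MeanVolumeZ0} already delivers
\[
\EE V_n(Z_0)=\Gamma\!\left(\tfrac{n}{r}+1\right)\kappa_n\, a_n^{\,n/r},\qquad a_n:=r\,\tfrac{\omega_n\omega_{r+1}}{2\gamma\,\omega_{n+r}}.
\]
Since the $(n-1)$-skeleton of a polytope coincides with its topological boundary, $\cH^{n-1}(\operatorname{skel}_{n-1}(Z_0))=2V_{n-1}(Z_0)$. Applying Theorem~\ref{thm:Skelett} with $\ell=1$ and inserting $c_r(n,1)=1/r$ (Proposition~\ref{Prop:ConstantEstimate}) together with $\omega_1=2$ yields
\[
\EE V_{n-1}(Z_0)=\frac{2\gamma}{r}\,\pi^{1+(n-1)/r}\,a_n^{\,1+(n-1)/r}\,\frac{\omega_{r+n-1}}{\omega_r\,\omega_{2+2(n-1)/r}}.
\]
Forming $\cI_n(Z_0)=(\EE V_{n-1}(Z_0))^{n/(n-1)}/\EE V_n(Z_0)$ produces an explicit, purely analytic expression in $n,r,\gamma$ in which (as a consistency check) the $\gamma$-dependence cancels.

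\textbf{Step 2: Stirling asymptotics.} Substituting $r=bn^{\alpha}$ we must expand the four gamma-type objects appearing in the ratio, namely $\Gamma(n/r+1)$, $\omega_{n+r}$, $\omega_{r+n-1}$ and $\omega_{2+2(n-1)/r}$, plus $\kappa_n$. Each is governed by a gamma value whose argument scales differently with $n$ according to the sign of $\alpha$: for $\alpha<0$ the leading term is $\Gamma(n/r+1)=\Gamma(n^{1-\alpha}/b+1)$ with $n/r\to\infty$; for $\alpha=0$ the ratio $n/r$ is linear in $n$ with coefficient $1/b$; for $0<\alpha<1$ both $n/r$ and $r$ diverge with $n/r$ dominating; at $\alpha=1$ they are comparable (giving the extra $(b+1)$ factor in the gauge); and for $\alpha>1$ the parameter $r$ dominates, so $n/r\to 0$ and $\Gamma(n/r+1)\to 1$ while $\omega_{n+r}/\omega_r$ becomes $\omega_n$-like, reproducing the Euclidean ball gauge $(2\pi e)^{-1/2}n^{-1/2}$.

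\textbf{Step 3: ratios $\omega_{a}/\omega_{b}$.} The dominant asymptotic mechanism is the formula
\[
\frac{\omega_{a+c}}{\omega_{a}}=\pi^{c/2}\,\frac{\Gamma(a/2)}{\Gamma((a+c)/2)}\sim\pi^{c/2}(a/2)^{-c/2}
\]
when $a\to\infty$ with $c=o(a)$, refined via $\log\Gamma(x)=(x-\tfrac12)\log x-x+\tfrac12\log(2\pi)+O(1/x)$ when the discrepancy between arguments is of the same order. One plugs $a=n+r$, $c=-1$ etc., and carefully tracks powers of $n$ and exponentials coming from terms of the form $(1+r/n)^{n+r}$ in each regime. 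The $n/(n-1)$ exponent on $\EE V_{n-1}(Z_0)$ contributes only a lower-order factor of the form $1+O(1/n)$ in each case.

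\textbf{Main obstacle.} The bookkeeping of Stirling corrections at the transition values $\alpha=0$ and $\alpha=1$ is the delicate part, because there ratios of gamma functions with arguments of comparable size leave behind non-trivial constants like $e^{1/b-1/2}\omega_b/\omega_{b+1}$ or $\sqrt{b/(b+1)}$ rather than clean powers of $n$; a term-by-term expansion of $\log\Gamma$ up to the constant $\tfrac12\log(2\pi)$ is required. Once these boundary values are handled, the interior regimes $\alpha<0$, $0<\alpha<1$ and $\alpha>1$ follow by the same scheme with simpler dominant-balance arguments, and verifying $\varphi(n)\cI_n(Z_0)\to 1$ in each case completes the proof.
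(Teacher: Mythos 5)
Your proposal follows exactly the paper's route: identify $\EE V_{n-1}(Z_0)=\tfrac12\EE\cH^{n-1}(\operatorname{skel}_{n-1}(Z_0))$ via Theorem \ref{thm:Skelett} with $\ell=1$ (and $c_r(n,1)=1/r$), take $\EE V_n(Z_0)$ from \eqref{eq:MeanVolumeZ0}, note the $\gamma$-independence of the ratio, and extract the gauge function by Stirling asymptotics in the various regimes of $\alpha$. Your explicit closed forms for the two mean volumes are consistent with the paper's formulae, so this is essentially the same proof, written out in somewhat more detail than the paper's own sketch.
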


This result is independent of the intensity, since the intensities cancel out in the ratio.
The most surprising observation is that only for $\alpha<0$ the gauge function is divergent, as $n$ tends to infinity, since the term $\left(1-\frac{1}{n}\right)^{-n^{1-\a}}$ grows exponentially fast.
This indicates that for negative $\alpha$ the asymptotic nature of the zero cell is fundamentally different.
Theorem \ref{thm:IsoperimetricRatio} in particular covers the case of the zero cell of a rotation and translation invariant Poisson hyperplane tessellation ($r=1$) and that of the typical cell of a translation invariant Poisson-Voronoi tessellation ($r=n$) and highlights the different shapes of these cells in high  dimensions. Comparing the gauge functions with that of a ball, we roughly speaking see that typical Poisson-Voronoi cells are approximately spherical in the mean, whereas the shape of the zero cell for $r=1$ is degenerate, in this sense.

\medspace

Next we investigate the behaviour of the $f$-vector of the zero cell $Z_0$ in different asymptotic regimes. The limiting behaviour of the bounds from \eqref{fnull} in Corollary \ref{cor:EndabschaetzungFVektor} can be obtained by applying Stirling's formula. This yields the growth rate for the expected number of vertices of the zero cell  in the general regime $r=b\,n^\a$ with $b>0$ and $\a\in\RR$. Since $Z_0$ is almost surely a simple polytope, $f_1(Z_0)={\frac{n}{ 2}}f_0(Z_0)$ almost surely, and thus we also obtain a corresponding result for the number of edges. A refinement of this argument, based on \eqref{fnml} in Corollary \ref{cor:EndabschaetzungFVektor}, exhibits the asymptotic behaviour for the other face numbers as well.

\begin{theorem}\label{thm:HighDimensionsVertices}
Let $r=b\,n^\a$ with $b>0$ and $\a\in \RR$. Let $\ell\in\N_0$ be fixed.
\begin{enumerate}
\item[{\rm (i)}] If $\alpha<0$, then
\[\lim\limits_{n\to\infty}\sqrt[n]{\EE f_\ell(Z_0)} =2.\]
\item[{\rm (ii)}] If $\alpha=0$, then
\[\lim\limits_{n\to\infty}\sqrt[n]{\EE f_\ell(Z_0)}=\frac{\o_{b+1}}{\k_b}.\]
\item[{\rm (iii)}] If $\alpha>0$, then
\[\lim\limits_{n\to\infty} n^{-\a/2}\sqrt[n]{\EE f_\ell(Z_0)}=\sqrt{2\pi b}.\]
\end{enumerate}
\end{theorem}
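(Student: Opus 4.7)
The plan is to sandwich $\EE f_\ell(Z_0)$ between the two-sided bounds supplied by Corollary \ref{cor:EndabschaetzungFVektor}, with the running index there replaced by $n-\ell$, take $n$-th roots, and then analyse how the central gamma-function quotient $\omega_{r+1}/\kappa_r$ behaves in each regime of $\alpha$. Explicitly, \eqref{fnml} gives, for all $n\geq \ell+1$,
\begin{equation*}
\frac{\kappa_r}{n-\ell}\,\frac{\omega_{\ell+1}}{\omega_{\ell+1+r}}\left(\frac{\omega_{r+1}}{\kappa_r}\right)^{n-\ell}
\;\leq\;\EE f_{\ell}(Z_0)\;\leq\; 2\binom{n}{\ell}\left(\frac{\omega_{r+1}}{\kappa_r}\right)^{n-1},
\end{equation*}
so that both sides share the dominant factor $(\omega_{r+1}/\kappa_r)^{n+O(1)}$.

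Next I would handle the multiplicative prefactors uniformly in $\alpha$. The upper-bound prefactor $2\binom{n}{\ell}$ is polynomial in $n$ for fixed $\ell$, so its $n$-th root tends to $1$ trivially. For the lower-bound prefactor, rewriting $\kappa_r/\omega_{\ell+1+r}=\Gamma((r+\ell+1)/2)/\bigl(2\pi^{(\ell+1)/2}\Gamma(r/2+1)\bigr)$ and applying the classical ratio asymptotic $\Gamma(x+a)/\Gamma(x)\sim x^a$ with $x=(r+2)/2$ and $a=(\ell-1)/2$ shows that this prefactor grows only as a polynomial in $r=bn^\alpha$, hence at worst polynomially in $n$; its $n$-th root therefore also tends to $1$ for every $\alpha\in\RR$. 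This reduces the problem to controlling the central quotient alone.

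For the central quotient, using $\omega_{r+1}/\kappa_r = 2\sqrt{\pi}\,\Gamma(r/2+1)/\Gamma((r+1)/2)$, the three cases split cleanly. If $\alpha<0$, then $r\to 0$ and continuity of $\Gamma$ at $1/2$ and $1$ gives $\omega_{r+1}/\kappa_r\to \omega_1/\kappa_0 = 2$, so both sides raised to the power $1/n$ squeeze $\sqrt[n]{\EE f_\ell(Z_0)}$ to $2$. If $\alpha=0$, the quotient equals the constant $\omega_{b+1}/\kappa_b$ and the squeeze yields the stated limit. If $\alpha>0$, the same ratio asymptotic with $x=(r+1)/2$ and $a=1/2$ yields $\omega_{r+1}/\kappa_r\sim \sqrt{2\pi r}=\sqrt{2\pi b}\,n^{\alpha/2}$; multiplying through by $n^{-\alpha/2}$ and letting $n\to\infty$ in both inequalities (using that all remaining factors contribute trivially to the $n$-th root) gives $n^{-\alpha/2}\sqrt[n]{\EE f_\ell(Z_0)}\to\sqrt{2\pi b}$.

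I expect the main technical obstacle to appear in the fast-growing regime $\alpha\geq 1$, where $r$ itself grows at least linearly in $n$ and $\omega_{r+1}/\kappa_r$ diverges polynomially. One must verify that neither the lower-bound prefactor nor the mismatch between the exponents $(n-\ell)/n$ and $(n-1)/n$ in the two bounds leaves a residue that survives the $n$-th root; this amounts to showing $\tfrac{1}{n}\log(\omega_{r+1}/\kappa_r)=\tfrac{\alpha}{2n}\log n + O(1/n)\to 0$, which is immediate from the Stirling-type estimates already invoked. Everything else is routine book-keeping with the explicit formulae \eqref{eq:OmegaKappaExakt}.
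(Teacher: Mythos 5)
Your proposal is correct and follows essentially the same route as the paper: both arguments sandwich $\EE f_\ell(Z_0)$ using the bounds of Corollary \ref{cor:EndabschaetzungFVektor} (with the corollary index set to $n-\ell$), take $n$-th roots, and reduce everything to the Stirling/Gamma-ratio asymptotics of $\omega_{r+1}/\kappa_r$ in the three regimes, with the prefactors contributing trivially after the $n$-th root. The paper phrases its upper bound via Corollary \ref{cor:EstimateFVector} combined with \eqref{fnull}, but that is exactly how the upper bound in \eqref{fnml} is obtained, so the content is the same.
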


In (i) and (ii) of the following theorem, our approach yields $\liminf\limits_{n\to\infty}\sqrt[n]{\EE f_{n-\ell}(Z_0)}\ge 1$, which is trivial and therefore not stated as part of the result.
\begin{theorem}\label{thm:HighDimensionRfest}
Let $r=b\, n^\alpha$ with $b>0$ and $\alpha\in\R$. Let $\ell \in\N$ be fixed.
\begin{enumerate}
\item[{\rm (i)}] If $\a\leq 0$, then
\[\limsup\limits_{n\to\infty}\sqrt[n]{\EE f_{n-\ell}(Z_0)}\leq \begin{cases}2& : \a<0\\ \frac{\o_{b+1}}{\k_b}& : \a=0.\end{cases}\]
\item[{\rm (ii)}] If $\a\in(0,1)$, then
\[\limsup\limits_{n\to\infty}n^{-\a/2}\sqrt[n]{\EE f_{n-\ell}(Z_0)}\leq\sqrt{2\pi b}.\]
\item[{\rm (iii)}]If $\a=1$, then
\[\sqrt{1+b}\left( 1+\frac{1}{b}\right)^{\frac{b}{2}}\leq \liminf\limits_{n\to\infty}\sqrt[n]{\EE f_{n-\ell}(Z_0)},\quad \limsup\limits_{n\to\infty}n^{-\frac{\alpha}{2}}\sqrt[n]{\EE f_{n-\ell}(Z_0)}\leq \sqrt{2\pi b}.\]
\item[{\rm (iv)}]If $\a>1$, then
\[\sqrt{eb}\leq \liminf\limits_{n\to\infty}n^{\frac{1-\a}{2}}\sqrt[n]{\EE f_{n-\ell}(Z_0)}, \quad \limsup\limits_{n\to\infty}n^{-\frac{\a}{2}}\sqrt[n]{\EE f_{n-\ell}(Z_0)}\leq \sqrt{2\pi b}.\]
\end{enumerate}
\end{theorem}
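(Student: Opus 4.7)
The plan is to extract every bound from the two-sided estimate
\[
\frac{\k_r}{\ell}\,\frac{\o_{n-\ell+1}}{\o_{n-\ell+1+r}}\left(\frac{\o_{r+1}}{\k_r}\right)^\ell
\;\leq\; \EE f_{n-\ell}(Z_0) \;\leq\; 2\binom{n}{\ell}\left(\frac{\o_{r+1}}{\k_r}\right)^{n-1}
\]
of Corollary~\ref{cor:EndabschaetzungFVektor}, take $n$th roots, and reduce to asymptotics of $\Gamma$-functions via $\o_\ell = 2\pi^{\ell/2}/\Gamma(\ell/2)$ and $\k_\ell = \pi^{\ell/2}/\Gamma(\ell/2+1)$ from \eqref{eq:OmegaKappaExakt}. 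Since $\ell$ is fixed, all the polynomial prefactors $2^{1/n}$, $\binom{n}{\ell}^{1/n}$, $\ell^{-1/n}$ and $(\o_{r+1}/\k_r)^{\ell/n}$ tend to $1$ (the last one because $\log(\o_{r+1}/\k_r)=O(\log n)$), so only the ``bulk'' powers survive in the limit.

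\textbf{Upper bounds (i)--(iv).} These follow from the asymptotics of $\o_{r+1}/\k_r=2\sqrt{\pi}\,\Gamma(r/2+1)/\Gamma((r+1)/2)$ in three regimes: $\o_{r+1}/\k_r\to 2$ as $r\to 0$ (case $\a<0$); $\o_{r+1}/\k_r=\o_{b+1}/\k_b$ for $\a=0$; and $\o_{r+1}/\k_r\sim\sqrt{2\pi r}=\sqrt{2\pi b}\,n^{\a/2}$ as $r\to\infty$ (case $\a>0$), by Stirling. Raising to the $(n-1)/n$-th power, and, for $\a>0$, multiplying by $n^{-\a/2}$, yields the four upper bounds.

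\textbf{Lower bounds (iii) and (iv).} One needs to analyse $\k_r\,\o_{n-\ell+1}/\o_{n-\ell+1+r}$. Setting $A=(n-\ell+1+r)/2$ and $B=(n-\ell+1)/2$, the Stirling expansion $\log\Gamma(x)=x\log x-x+O(\log x)$ applied to $\log\Gamma(A)$, $\log\Gamma(B)$ and $\log\Gamma(r/2+1)$ produces, after the linear terms $-A+B+r/2=0$ cancel,
\[
\log\!\left[\k_r\,\frac{\o_{n-\ell+1}}{\o_{n-\ell+1+r}}\right]
\;=\; A\log A - \tfrac{r}{2}\log(r/2) - B\log B + O(\log n).
\]
For $\a=1$ ($r=bn$), a direct evaluation gives $\frac{1}{n}\cdot(\,\cdot\,)\to \frac{1+b}{2}\log(1+b)-\frac{b}{2}\log b$, whose exponential is $\sqrt{1+b}(1+1/b)^{b/2}$. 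For $\a>1$, expanding $\log A=\log(r/2)+n/r+O((n/r)^2)$ triggers cancellation of the $\tfrac{r}{2}\log(r/2)$ piece, leaving $\tfrac{n}{2}\log(er/n)+O(n^{2-\a})$; dividing by $n$ and using $\log(er/n)=1+\log b+(\a-1)\log n$, the exponential becomes $\sqrt{eb}\,n^{(\a-1)/2}$, giving the claimed $\liminf$.

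\textbf{Main obstacle.} The delicate step is controlling the Stirling remainders for $\a>1$, where the leading $\Theta(n^\a\log n)$ contributions of $\log\k_r$ and $-\log\o_{n-\ell+1+r}$ cancel exactly and the surviving behaviour is only of order $n\log n$. One must verify that the Taylor error $O(n^2/r)=O(n^{2-\a})$ in $A\log A$, the $O(\log n)$ Stirling constants and the subleading $\tfrac{1}{2}\log(A/B)$ correction all remain $o(n)$ uniformly on fixed $\a>1$; this holds since $n^{1-\a}\log n\to 0$, so that $\frac{\a-1}{2}\log n$ emerges as the genuine surviving leading term.
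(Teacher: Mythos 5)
Your proposal is correct and follows essentially the same route as the paper: both upper and lower bounds are read off from Corollary \ref{cor:EndabschaetzungFVektor} (the paper phrases the upper bound via Corollary \ref{cor:EstimateFVector} together with Theorem \ref{thm:HighDimensionsVertices}, which amounts to the same estimate), with the surviving asymptotics extracted by Stirling's formula applied to $\o_{r+1}/\k_r$ and to $\k_r\,\o_{n-\ell+1}/\o_{n-\ell+1+r}$. Your Stirling bookkeeping for $\alpha=1$ and $\alpha>1$, including the cancellation of the $\tfrac{r}{2}\log(r/2)$ terms and the $O(n^{2-\alpha})=o(n)$ remainder, reproduces the paper's limits $\sqrt{1+b}\,(1+1/b)^{b/2}$ and $\sqrt{eb}$ correctly.
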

%$$\lim_{n\to\infty}n^{-(a+1)}\sqrt[n]{\EE f_{n-\ell}(Z_0)}\leq {\frac{r} {2e}}{\frac{\o_{r+1}}{\o_r}}\left({\frac{ar} {ar+1-a}}\right)^{ar/2}\left({\frac{1-a} {ar+1-a}}\right)^{(1-a)/2}\left({\frac{ae}{ 1-a}}\right)^{1-a}(1-a)^a$$

In the preceding two theorems, we considered the growth rates of $\EE f_{\ell} (Z_0)$ and of $\EE f_{n-\ell} (Z_0)$ for fixed $\ell$
and $n\to\infty$. We complement the picture by studying an intermediate regime, where $\ell$ is proportional to $n$, that is, $\ell=\lfloor an\rfloor$ with $a\in(0,1)$.

\begin{theorem}\label{thm:HighDimensionsR=bn}
Let $r=b\,n^\alpha$ with $b>0$ and $\alpha\in\R$. Let $a\in (0,1)$ and $\ell=\lfloor an\rfloor$. Put $c(a)=(a^a(1-a)^{1-a})^{-1}$.
\begin{enumerate}
\item[{\rm (i)}] If $\alpha<0$, then
$$
2^{1-a}\le \liminf\limits_{n\to\infty}\sqrt[n]{\EE f_{\ell}(Z_0)},\qquad \limsup\limits_{n\to\infty}\sqrt[n]{\EE f_{\ell}(Z_0)}\le 2\, c(a)\,.
$$
\item[{\rm (ii)}] If $\alpha=0$, then
$$
\left(\frac{\omega_{b+1}}{\kappa_b}\right)^{1-a}\le \liminf\limits_{n\to\infty}\sqrt[n]{\EE f_{\ell}(Z_0)},\qquad \limsup\limits_{n\to\infty}\sqrt[n]{\EE f_{\ell}(Z_0)}\le c(a)\, \frac{\omega_{b+1}}{\kappa_b}\,.
$$
\item[{\rm (iii)}]
If $\alpha>0$, then
$$
\limsup\limits_{n\to\infty}n^{-\alpha/2}\sqrt[n]{\EE f_{\ell}(Z_0)}\le c(a)\,\sqrt{2\pi b} \,,
$$
$$
\liminf\limits_{n\to\infty}n^{-\alpha (1-a)/2}\sqrt[n]{\EE f_{\ell}(Z_0)}\ge \begin{cases} \sqrt{2\pi b}^{1-a}& : \alpha\in (0,1)\\
 \sqrt{2\pi b}^{1-a}\left(1+\frac{b}{a}\right)^{\frac{a}{2}}\left(1+\frac{a}{b}\right)^{\frac{b}{2}} & : \alpha=1
   \end{cases}
$$
and
$$
\liminf\limits_{n\to\infty}n^{-(\alpha-a)/2}\sqrt[n]{\EE f_{\ell}(Z_0)}\ge\sqrt{2\pi b}^{1-a}\left(\frac{eb}{a}\right)^{\frac{a}{2}}\,,\qquad  \alpha>1\,.
$$
\end{enumerate}
\end{theorem}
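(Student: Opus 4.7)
The plan is to use Corollary \ref{cor:EndabschaetzungFVektor} applied with $n-\ell$ in place of $\ell$, which gives the two-sided bound
\[
\frac{\kappa_r}{n-\ell}\,\frac{\omega_{\ell+1}}{\omega_{\ell+1+r}}\left(\frac{\omega_{r+1}}{\kappa_r}\right)^{n-\ell}\le \EE f_\ell(Z_0)\le 2\binom{n}{\ell}\left(\frac{\omega_{r+1}}{\kappa_r}\right)^{n-1}.
\]
With $\ell=\lfloor an\rfloor$, Stirling's formula yields $\sqrt[n]{\binom{n}{\ell}}\to c(a)$. Writing $\omega_{r+1}/\kappa_r=2\sqrt{\pi}\,\Gamma(r/2+1)/\Gamma(r/2+1/2)$ and using either the limits $\Gamma(1)=1$, $\Gamma(1/2)=\sqrt{\pi}$ (as $r\to 0$) or Stirling's formula for the Gamma function (as $r\to\infty$), one obtains $\omega_{r+1}/\kappa_r\to 2$ when $\alpha<0$, $\omega_{r+1}/\kappa_r=\omega_{b+1}/\kappa_b$ when $\alpha=0$, and $\omega_{r+1}/\kappa_r\sim\sqrt{2\pi r}=\sqrt{2\pi b}\,n^{\alpha/2}$ when $\alpha>0$.

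All three $\limsup$ assertions then follow immediately from the upper bound: extracting the $n$th root produces the factor $c(a)$ from the binomial coefficient and, asymptotically, the same value as $\omega_{r+1}/\kappa_r$ itself, yielding $2c(a)$, $c(a)\omega_{b+1}/\kappa_b$, and $c(a)\sqrt{2\pi b}$ (the last after normalisation by $n^{-\alpha/2}$). For the $\liminf$ bounds, the dominant contribution comes from $\bigl((\omega_{r+1}/\kappa_r)^{n-\ell}\bigr)^{1/n}=(\omega_{r+1}/\kappa_r)^{(n-\ell)/n}$, whose limit (with the appropriate $n$-normalisation when $\alpha>0$) equals $2^{1-a}$, $(\omega_{b+1}/\kappa_b)^{1-a}$, or $\sqrt{2\pi b}^{1-a}$, respectively. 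It therefore remains to show that the prefactor $(\kappa_r/(n-\ell))(\omega_{\ell+1}/\omega_{\ell+1+r})$ raised to the $1/n$ power contributes $1$ in the limit when $\alpha<1$, and to compute its precise contribution when $\alpha\geq 1$.

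This residual analysis hinges on the Stirling expansion
\[
\log\Gamma(A)-\log\Gamma(B)=A\log A-B\log B-(A-B)+O(\log A),
\]
applied with $A=(\ell+1+r)/2$ and $B=(\ell+1)/2$, combined with the expansion $\log\kappa_r=(r/2)\log(2\pi e/r)+O(\log r)$. For $\alpha<1$ both $(1/n)\log\kappa_r$ and $(1/n)(\log\omega_{\ell+1}-\log\omega_{\ell+1+r})$ vanish, so the prefactor contributes $1$. For $\alpha=1$ (where $r=bn$), a direct expansion shows that the combined quantity $(1/n)[\log\kappa_r+\log\omega_{\ell+1}-\log\omega_{\ell+1+r}]+(1-\ell/n)\log(\omega_{r+1}/\kappa_r)$ has a $\log n$-coefficient equal to $(1-a)/2$ and an additive constant equal to $\log\bigl(\sqrt{2\pi b}^{1-a}(1+b/a)^{a/2}(1+a/b)^{b/2}\bigr)$, after elementary algebraic simplification. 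The hardest case is $\alpha>1$: both $(1/n)\log\kappa_r$ and $(1/n)(\log\omega_{\ell+1}-\log\omega_{\ell+1+r})$ contain a divergent contribution of order $n^{\alpha-1}\log n$ arising from the leading $\log(bn^\alpha)$-factor in both Stirling expansions, and the key observation is that these two diverging contributions cancel exactly. What remains has a $\log n$-coefficient equal to $(\alpha-a)/2$ and additive constant $\log\bigl(\sqrt{2\pi b}^{1-a}(eb/a)^{a/2}\bigr)$, matching the claim. The main technical obstacle is the careful bookkeeping of this cancellation between two diverging Stirling expansions; once executed correctly, the remainder is routine asymptotic bookkeeping.
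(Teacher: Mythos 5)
Your proposal is correct and follows exactly the route the paper intends: it omits the proof as "similar to the preceding two theorems," whose arguments are precisely the two-sided bound of Corollary \ref{cor:EndabschaetzungFVektor} (with $\ell$ replaced by $n-\ell$) combined with Stirling's formula, which is what you use, and your asserted asymptotics (the $c(a)$ factor from $\binom{n}{\ell}^{1/n}$, the $\log n$-coefficients $(1-a)/2$ and $(\alpha-a)/2$, the constants for $\alpha=1$ and $\alpha>1$, and the exact cancellation of the divergent Stirling contributions from $\kappa_r$ and $\omega_{\ell+1}/\omega_{\ell+1+r}$ when $\alpha>1$) all check out.
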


\section{Proofs}\label{sec:Proofs}

\subsection{Preparations}

We will often use the following well-known fact: if $X$ is a non-negative random variable with $\EE X^p<\infty$ for some integer $p\geq 1$, then
\begin{equation}\label{eq:Momentenformel}
\EE X^p = p\,\int\limits_0^\infty s^{p-1}\,\PP(X>s)\,\dint s\,.
\end{equation}

Let us recall the multivariate Mecke formula for the Poisson hyperplane process $X$, see \cite[Corollary 3.2.3]{SW}. It is one of our main tools to establish the identity stated in Theorem \ref{thm:ExpectFvector}. For $\ell\in\N$ and any non-negative measurable function $f$ depending on $\ell$ hyperplanes and on $X$, it states that
\begin{equation}\label{eq:Mecke}
\begin{split}
&\EE\sum_{(H_1,\ldots,H_\ell)\in X_{\neq}^\ell}f(H_1,\ldots,H_\ell,X)\\ &\qquad\qquad\qquad=\int\limits_{A(n,n-1)^\ell}\EE f(H_1,\ldots,H_\ell,X+\d_{H_1}+\ldots+\d_{H_\ell})\,\Theta^\ell\big(\dint(H_1,\ldots,H_\ell)\big)\,,
\end{split}
\end{equation}
where $X+\d_{H_1}+\ldots+\d_{H_\ell}$ is the Poisson hyperplane process $X$ with the hyperplanes $H_1,\ldots,H_\ell$ added and
$X^\ell_{\neq}$ denotes the set of all $(H_1,\ldots,H_\ell)$ with $H_i\in X$ for $i=1,\ldots,n$ and $H_i\neq H_j$ for $i\neq j$.

In the following, we will need an expression for the $\Theta$-measure of the set of hyperplanes that intersect a line segment having one of its endpoints at the origin. Recall that $\Theta$ is given by \eqref{def:Theta}.

\begin{lemma}\label{lem:ThetaStrecke0z}
If $z\in\RR^n$, then
$$\Theta\left(A_{[0,z]}\right)={\frac{2\g}{ r}}{\frac{\o_{r+n}} {\o_n\o_{r+1}}}\,\|z\|^r\,,$$
where $A_{[0,z]}=\{H\in A(n,n-1):H\cap [0,z]\neq\emptyset\}$ is the set of hyperplanes which intersect the line segment $[0,z]=\{sz:s\in[0,1]\}$.
\end{lemma}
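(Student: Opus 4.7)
The plan is to unwind the definition of $\Theta$, identify for each direction $u\in\SS^{n-1}$ the set of signed distances $t$ for which the hyperplane $H(u,t)$ meets $[0,z]$, and then reduce the surviving spherical integral to a Beta-type integral via \eqref{eqGamma}.

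First I would note that $H(u,t)\cap[0,z]\ne\emptyset$ is equivalent to the existence of some $s\in[0,1]$ with $s\,\langle z,u\rangle=t$. Hence, for $u$ with $\langle z,u\rangle\ne 0$ (a full-measure set), the admissible $t$ are precisely those lying between $0$ and $\langle z,u\rangle$, i.e.\ $t/\langle z,u\rangle\in[0,1]$. Plugging this into \eqref{def:Theta} and carrying out the inner integral gives
\begin{equation*}
\int_{\RR}\mathbf{1}\{H(u,t)\in A_{[0,z]}\}\,|t|^{r-1}\,\dint t
=\int_{0}^{|\langle z,u\rangle|}t^{r-1}\,\dint t
=\frac{1}{r}\,|\langle z,u\rangle|^{r}.
\end{equation*}

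Second, I would pull $\|z\|^{r}$ out of the remaining spherical integral. By rotational invariance of $\cH^{n-1}$ on $\SS^{n-1}$, writing $z=\|z\|\,e$ with $e\in\SS^{n-1}$ (or, without loss of generality, $e=e_1$), we get
\begin{equation*}
\int_{\SS^{n-1}}|\langle z,u\rangle|^{r}\,\cH^{n-1}(\dint u)
=\|z\|^{r}\int_{\SS^{n-1}}|\langle e_1,u\rangle|^{r}\,\cH^{n-1}(\dint u).
\end{equation*}

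Third, I would evaluate the last integral by slicing $\SS^{n-1}$ in spherical coordinates relative to $e_1$: parameterize $u=\cos\varphi\,e_1+\sin\varphi\,v$ with $\varphi\in[0,\pi]$ and $v\in\SS_{e_1^\perp}$, so that the surface element factorizes as $(\sin\varphi)^{n-2}\,\dint\varphi\,\cH^{n-2}(\dint v)$. This yields
\begin{equation*}
\int_{\SS^{n-1}}|\langle e_1,u\rangle|^{r}\,\cH^{n-1}(\dint u)
=\o_{n-1}\int_{0}^{\pi}|\cos\varphi|^{r}(\sin\varphi)^{n-2}\,\dint\varphi
=2\,\o_{n-1}\int_{0}^{\pi/2}(\cos\varphi)^{r}(\sin\varphi)^{n-2}\,\dint\varphi,
\end{equation*}
and by \eqref{eqGamma} (with $\alpha=n-2$, $\beta=r$) the last integral equals $\o_{n+r}/(\o_{n-1}\o_{r+1})$, giving $2\o_{n+r}/\o_{r+1}$.

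Finally, combining the three pieces we obtain
\begin{equation*}
\Theta(A_{[0,z]})=\frac{\gamma}{\o_n}\cdot\frac{1}{r}\cdot\|z\|^{r}\cdot\frac{2\o_{n+r}}{\o_{r+1}}
=\frac{2\gamma}{r}\,\frac{\o_{n+r}}{\o_{n}\o_{r+1}}\,\|z\|^{r},
\end{equation*}
which is the claimed identity. There is no real obstacle here: once the intersection condition is translated into the bounds $0\le t\le\langle z,u\rangle$, everything reduces to the Beta-integral evaluation \eqref{eqGamma}; the only thing to watch is the factor $2$ from folding $[0,\pi]$ onto $[0,\pi/2]$.
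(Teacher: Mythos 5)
Your proof is correct and follows essentially the same route as the paper: translate the intersection condition into the bound on $t$, compute the inner integral to get $|\langle z,u\rangle|^r/r$, and evaluate the remaining spherical integral with \eqref{eqGamma}. The only cosmetic difference is where the factor $2$ appears (the paper folds to $\langle e_z,u\rangle_+^r$ over the sphere, you fold the angular integral from $[0,\pi]$ to $[0,\pi/2]$), which changes nothing of substance.
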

\begin{proof}
Using \eqref{def:Theta} we can write
\begin{equation*}
\begin{split}
\Theta\left(A_{[0,z]}\right) &= {\frac{\g}{\o_n}}\int\limits_{\SS^{n-1}}\int\limits_{\RR}{\bf 1}\big\{H(u,t)\cap[0,z]\neq\emptyset\big\}\,|t|^{r-1}\;\dint t\,\cH^{n-1}(\dint u)\\
&= {\frac{2\g}{\o_n}}\int\limits_{\SS^{n-1}}\int\limits_0^\infty{\bf 1}\big\{0\leq t\leq\langle z,u\rangle_+\big\}\,t^{r-1}\;\dint t\,\cH^{n-1}(\dint u)\\
&= {\frac{2\g}{\o_n}}{\frac{\|z\|^r}{ r}}\int\limits_{\SS^{n-1}}\langle e_z,u\rangle_+^r\;\cH^{n-1}(\dint u)\,,
\end{split}
\end{equation*}
where $e_z\in\SS^{n-1}$ is such that $z=\|z\|e_z.$
Expressing $u\in\SS^{n-1}$ as $u=\cos\theta \,e_z+\sin\theta \,v$ with $0\leq\theta\leq\pi$ and $v\in\SS^{n-1}\cap e_z^\perp$ and using \eqref{eqGamma}, we obtain
\begin{equation*}
\begin{split}
\int\limits_{\SS^{n-1}}\langle e_z,u\rangle_+^r\,\cH^{n-1}(\dint u) &= \int\limits_0^\pi\int\limits_{\SS^{n-1}\cap e_z^\perp}\max\{0,\langle e_z,\cos\theta\, e_z+\sin\theta\, v\rangle\}^r\,\cH^{n-2}(\dint v)\,(\sin\theta)^{n-2}\;\dint\theta\\
&=\omega_{n-1}\int\limits_0^{\pi/2}(\cos\theta)^r(\sin\theta)^{n-2}\;\dint\theta ={\frac{\o_{r+n}}{\o_{r+1}}}\,.
\end{split}
\end{equation*}
Thus,
$$\Theta(A_{[0,z]})={\frac{2\g}{ r}}{\frac{\o_{r+n}} {\o_n\o_{r+1}}}\,\|z\|^r\,,$$
which completes the proof.
\end{proof}

\subsection{Proof of Theorem \ref{thm:ExpectFvector} and its corollaries}\label{sec:proofExactFormula}

\begin{proof}[Proof of Theorem \ref{thm:ExpectFvector}.]
For $\ell\in\{0,\ldots,n\}$, any $(n-\ell)$-dimensional face $F\in\cF_{n-\ell}(Z_0)$ of $Z_0$ is the intersection $Z_0\cap H_1\cap\ldots\cap H_\ell$ of the zero cell with $\ell$ hyperplanes from $X$. On the other hand, it follows as in the proof of \cite[Theorem  4.4.5]{SW} that almost surely any $\ell$ distinct hyperplanes from $X$ have linearly independent normal vectors.
Therefore, almost surely every nonempty intersection $Z_0\cap H_1\cap\ldots\cap H_\ell$ of the zero cell with $\ell$ hyperplanes from $X$ is an $(n-\ell)$-dimensional face of $Z_0.$  Thus, we can re-write $\EE f_{n-\ell}(Z_0)$ as
$$\EE f_{n-\ell}(Z_0)={\frac{1}{\ell!}}\,\EE\sum_{(H_1,\ldots,H_\ell)\in X_{\neq}^\ell}{\bf 1}\{H_1\cap\ldots\cap H_\ell\cap Z_0\neq\emptyset\}$$
and use the multivariate Mecke formula \eqref{eq:Mecke} together with \eqref{def:Theta} to see that
\begin{equation*}
\begin{split}
\EE f_{n-\ell}(Z_0)
&= {\frac{1}{\ell!}}\left({\frac{\g}{\o_n}}\right)^\ell \EE\int\limits_{(\SS^{n-1})^\ell}\int\limits_{\RR^{\ell}}{\bf 1}\{H(u_1,t_1)\cap\ldots\cap H(u_\ell, t_\ell)\cap Z_0\neq\emptyset\}\\
&\qquad\qquad\qquad\times\,|t_1\cdots t_\ell|^{r-1}\,\dint( t_1,\ldots, t_\ell)\,\cH^{\ell(n-1)}\big(\dint(u_1,\ldots,u_\ell)\big).
\end{split}
\end{equation*}
Fix linearly independent $u_1,...,u_\ell\in \SS^{n-1}$ and put $U={\rm span}\{u_1,\ldots,u_\ell\}$. For $t_1,\ldots,t_\ell\in\RR$ let $T(t_1,\ldots,t_\ell)$ be the intersection point of the hyperplanes $H(u_1,t_1),\ldots,H(u_\ell,t_\ell)$ with $U$. The mapping $T:\RR^{\ell}\to U$ is bijective and its inverse is given by $T^{-1}(z)=(\langle z,u_1\rangle,\ldots,\langle z,u_\ell\rangle)$. The Jacobian of $T^{-1}$ is $\nabla_\ell(u_1,...,u_\ell)$, that is, the $\ell$-dimensional volume of the parallelepiped spanned by $u_1,\ldots,u_\ell$, see \cite[Equation (13)]{Schneider09}. Moreover, we have
$$H(u_1,t_1)\cap\ldots\cap H(u_\ell, t_\ell)\cap Z_0\neq\emptyset$$
if and only if $\left(T(t_1,\ldots,t_\ell)+U^\bot\right)\cap Z_0\neq \emptyset$ which is equivalent to $T(t_1,\ldots,t_\ell)\in Z_0|U$.
 Thus,
\begin{equation*}
\begin{split}
\EE f_{n-\ell}(Z_0)={\frac{1}{\ell!}}\left({\frac{\g}{\o_n}}\right)^\ell \EE\int\limits_{(\SS^{n-1})^\ell}&\int\limits_U\nabla_\ell(u_1,\ldots,u_\ell)
\,{\bf 1}\{z\in Z_0|U\}\\
&\times\prod_{j=1}^\ell|\langle u_j,z\rangle|^{r-1}\,\cH^\ell(\dint z)\,\cH^{\ell(n-1)}\big(\dint(u_1,\ldots,u_\ell)\big).
\end{split}
\end{equation*}
Since the integrand is symmetric in $u_1,\ldots,u_\ell\in \SS^{n-1}$ we can replace the integration over $\left(\SS^{n-1}\right)^{\ell}$ by an integration over $\left(G(n,1)\right)^{\ell}$. Then, we apply in a second step an integral-geometric transformation formula of Blaschke-Petkantschin-type, \cite[Theorem 7.2.3]{SW} (alternatively, the more general Theorem 1 in \cite{ArbeiterZaehle} can be applied directly), which implies that the integration over $\left(G(n,1)\right)^{\ell}$ is replaced by an integration over $G(n,\ell)$. Thus, we obtain
\begin{equation*}
\begin{split}
\EE f_{n-\ell}(Z_0) &= {\frac{b}{\ell!}}\left({\frac{\g}{\o_n}}\right)^\ell\EE\int\limits_{G(n,\ell)} \int\limits_{(\SS_L)^\ell}\int\limits_L\nabla_\ell(u_1,\ldots,u_\ell)^{n-\ell+1} \,{\bf 1}\{z\in Z_0|U\}\\
&\hspace{3cm}\times\prod_{j=1}^\ell|\langle u_j,z\rangle|^{r-1}\,\cH^\ell(\dint z)\,\cH^{\ell(\ell-1)}\big(\dint(u_1,\ldots,u_\ell)\big)\,\nu_\ell(\dint L)\\
&= {\frac{b}{\ell!}}\left({\frac{\g}{\o_n}}\right)^\ell\EE\int\limits_{G(n,\ell)}
\int\limits_{(\SS_L)^\ell}\int\limits_L\nabla_\ell(u_1,\ldots,u_\ell)^{n-\ell+1}\,\|z\|^{\ell(r-1)}\,{\bf 1}\{z\in Z_0|L\}\\
&\hspace{3cm}\times\prod_{j=1}^\ell|\langle u_j,e_z\rangle|^{r-1}\,\cH^\ell(\dint z)\,\cH^{\ell(\ell-1)}\big(\dint(u_1,\ldots,u_\ell)\big)\,\nu_\ell(\dint L),
\end{split}
\end{equation*}
where $e_z$ is a unit vector such that $z=\|z\|\,e_z$. The precise value of the constant $b$ follows from Equation (7.12) in \cite{SW} and equals
\begin{equation}\label{eq:DefConstantB}
 b={\frac{\o_{n-\ell+1}\cdots\o_n}{\o_1\cdots\o_\ell}}\,.
\end{equation}
Let $E=\text{span}\,\{e_1,\ldots,e_\ell\}$ and $\r\in SO_n $ be such that $\r L=E$ and $\r e_z = e_\ell$. Then
\begin{equation*}
\begin{split}
&\int\limits_{(\SS_L)^\ell}\nabla_\ell(u_1,\ldots,u_\ell)^{n-\ell+1}\,\prod_{j=1}^\ell
|\langle u_j,e_z\rangle|^{r-1}\,\cH^{\ell(\ell-1)}\big(\dint(u_1,\ldots,u_\ell)\big)\\
&=\int\limits_{(\SS_L)^\ell}\nabla_\ell(\r u_1,\ldots,\r u_\ell)^{n-\ell+1}\,\prod_{j=1}^\ell|\langle \r u_j,\r e_z\rangle|^{r-1}\,\cH^{\ell(\ell-1)}\big(\dint(u_1,\ldots,u_\ell)\big)\\
&=\int\limits_{(\SS_E)^\ell}\nabla_\ell( u_1,\ldots, u_\ell)^{n-\ell+1}\,\prod_{j=1}^\ell|\langle u_j,e_\ell \rangle|^{r-1}\,\cH^{\ell(\ell-1)}\big(\dint(u_1,\ldots,u_\ell)\big)=:c,
\end{split}
\end{equation*}
which is independent of $z$ and $L$. Using the isotropy of $Z_0$, we obtain
%\begin{equation}\label{eq:IntegralFormulaForfn-l}
%\begin{split}
\begin{align*}
\EE f_{n-\ell}(Z_0) &= {\frac{bc}{\ell!}}\left({\frac{\g}{\o_n}}\right)^\ell\EE\int\limits_{G(n,\ell)}\int\limits_L\|z\|^{\ell(r-1)}\,{\bf 1}\{z\in Z_0|L\}\,\cH^{\ell}(\dint z)\,\nu_\ell(\dint L)\\
&= {\frac{bc}{\ell!}}\left({\frac{\g}{\o_n}}\right)^\ell\EE \int\limits_E\|z\|^{\ell(r-1)}\,{\bf 1}\{z\in Z_0|E\}\,\cH^{\ell}(\dint z)\,.
\end{align*}
%\end{split}
%\end{equation}
The claim follows by writing
\begin{equation*}
\begin{split}
\EE f_{n-\ell}(Z_0) &=  {\frac{bc}{ \ell!}}\left({\frac{\g} {\o_n}}\right)^\ell\,\EE\int\limits_{\SS_E}\int\limits_0^{\r_{Z_0|E}(u)}s^{\ell r-1}\,\dint s\,\cH^{\ell-1}(\dint u)\\
&= {\frac{1} {r\ell}}{\frac{bc}{\ell!}}\left({\frac{\g}{\o_n}}\right)^\ell\,\EE\int\limits_{\SS_E}\r_{Z_0|E}(u)^{\ell r}\,\cH^{\ell-1}(\dint u)\,\\
&= {\frac{1} {r}}{\frac{bc}{\ell!}}\left({\frac{\g}{\o_n}}\right)^\ell\,\EE\tW^{E}_{\ell(1-r)}(Z_0|E)
\end{split}
\end{equation*}
with $c_r(n,\ell)={\frac{1} {r}}{\frac{bc}{\ell!}}\omega_n^{-\ell}$.
\end{proof}

\begin{proof}[Proof of Corollary \ref{VerticeNumberFormula}.]
Using the definition of $c_r(n)$ in the statement of the corollary, the identity for $\ell=n$ in Theorem \ref{thm:ExpectFvector} reads as follows:
$$\EE f_0(Z_0)={\frac{1} {r}}{\frac{1} {n!}}\,\omega_n^{-n}\,c_r(n)\,\g^n\,\EE\tW_{n(1-r)}(Z_0)\,.$$
Using the definition of the dual intrinsic volume we find that
$$\EE\tW_{n(1-r)}(Z_0)={\frac{1} {n}}\EE\int\limits_{\SS^{n-1}}\varrho_{Z_0}(u)^{nr}\,\cH^{n-1}(\dint u)\,.$$
We next use identity \eqref{eq:Momentenformel} and obtain together with the isotropy of $Z_0$ that
$${\frac{1 }{n}}\EE\int\limits_{\SS^{n-1}}\varrho_{Z_0}(u)^{nr}\,\cH^{n-1}(\dint u)={\frac{\o_n}{ n}}\,nr\int\limits_0^\infty s^{nr-1}\,\PP(\varrho_{Z_0}(e_n)>s)\,\dint s\,.$$
Since $Z_0$ is the zero cell of the Poisson hyperplane process $X$, which has intensity measure $\Theta$, we have  $\PP(\varrho_{Z_0}(e_n)>s)=e^{-\Theta\left(A_{[0,se_n]}\right)}$. Lemma \ref{lem:ThetaStrecke0z} then implies that
$$\EE f_0(Z_0)={\frac{1} {n!}}\,\omega_n^{-n}\,c_r(n)\,\g^n\,\o_n\int\limits_0^\infty s^{nr-1}\,e^{-{\frac{2\g}{ r}}{\frac{\o_{r+n}}{\o_n\o_{r+1}}}\,s^r}\,\dint s\,.$$
The proof is completed by
a straightforward integration.
\end{proof}

\begin{proof}[Proof of Corollary \ref{cor:EstimateFVector}]
We start with the lower bound. By Theorem \ref{thm:ExpectFvector} we have
%\begin{equation}\label{eq:fnlumschrift}
$$
\EE f_{n-\ell}(Z_0)=c_r(n,\ell)\,\g^\ell\,\EE\tW_{\ell(1-r)}^E(Z_0|E)=c_r(n,\ell)\,\g^\ell\,{\frac{1}{\ell}}\EE\int\limits_{\SS_E}\varrho_{Z_0|E}(u)^{\ell r}\,\cH^{\ell-1}(\dint u)
$$
%\end{equation}
with $E={\rm span}\{e_1,\ldots,e_\ell\}$. Observe that, since $u\in\SS_E$,
$$\varrho_{Z_0|E}(u)\geq\varrho_{Z_0\cap E}(u)=\varrho_{Z_0}(u)\,.$$
For $u\in\SS_E$ and $s\geq 0$, we combine \eqref{eq:Momentenformel} with Lemma \ref{lem:ThetaStrecke0z} to see that
\begin{equation*}
\begin{split}
\EE f_{n-\ell}(Z_0) &\geq c_r(n,\ell)\,\g^\ell\,{\frac{\o_\ell}{\ell}}\,r\ell\,\int\limits_0^\infty s^{r\ell-1}\,\PP(\varrho_{Z_0}(u)>s)\,\dint s\,.
\end{split}
\end{equation*}
The lower bound follows now as in the proof of Corollary \ref{VerticeNumberFormula}.

If $P\subset\R^n$ is a simple polytope, then each vertex is contained in precisely $\binom{n}{j}$ faces of dimension $j$ and each $j$-face has at least $j+1$ vertices. Hence, a simple counting argument yields
\[f_j(P)\leq \frac{1}{j+1}\binom{n}{j}f_0(P),\quad 1\leq j\leq n-1.\]
This implies the upper bound, since the zero cell is almost surely a simple polytope.
\end{proof}

\subsection{Proof of Proposition \ref{Prop:ConstantEstimate}}
Obviously, the relation holds for $\ell=1$.
For $\ell \in \{2,\ldots,n\}$, using spherical coordinates we obtain that
\begin{equation}\label{eq:ZwischschrittAsymptotik1}
\begin{split}
\int\limits_{(\SS^{\ell-1})^\ell} \nabla_\ell(u_1, &\ldots,u_\ell)^{n-\ell+1}\,\prod_{j=1}^\ell|\langle u_j,e_\ell\rangle|^{r-1}\,\cH^{\ell(\ell-1)}\big(\dint(u_1,\ldots,u_\ell)\big)\\
&=\int\limits_{(0,\pi)^\ell}\int\limits_{(\SS^{\ell-1}\cap e_\ell^\bot)^\ell} \nabla_\ell(\cos\theta_1 \,e_\ell+\sin\theta_1\, v_1,\ldots,\cos\theta_\ell\, e_\ell+\sin\theta_\ell\, v_\ell)^{n-\ell+1}\\
&\qquad\times \prod\limits_{j=1}^\ell \big(|\cos\theta_j|^{r-1} (\sin\theta_j)^{\ell-2}\big)\, \cH^{\ell(\ell-2)}\big(\dint(v_1,\ldots,v_\ell)\big)\,\dint(\theta_1\ldots\theta_\ell)\,.
\end{split}
\end{equation}
Next we use multilinearity and Laplace's expansion for the determinant to see that the last expression equals
\begin{equation}\label{eq:ZwischenschrittAsymptotik2}
\begin{split}
&2^\ell \int\limits_{(0,\frac{\pi}{2})^\ell}\int\limits_{(\SS^{\ell-1}\cap e_\ell^\bot)^\ell}
\Big|\sum\limits_{j=1}^\ell (-1)^j \cos\theta_j \sin\theta_1\cdots\sin\theta_{j-1}\sin\theta_{j+1}
\cdots \sin\theta_{\ell}\\
&\qquad\times\det(v_1,\ldots,v_{j-1},v_{j+1},\ldots,v_{\ell})\Big|^{n-\ell+1} \prod\limits_{j=1}^\ell\big( (\cos\theta_j)^{r-1} (\sin\theta_j)^{\ell-2}\big)\\
&\qquad\qquad\cH^{\ell(\ell-2)}\big(\dint(v_1,\ldots,v_\ell)\big)\,\dint(\theta_1\ldots\theta_\ell)\\
& =2^\ell \int\limits_{(0,\frac{\pi}{2})^\ell}\int\limits_{(\SS^{\ell-1}\cap e_\ell^\bot)^\ell}
\left|\sum\limits_{j=1}^\ell (-1)^j\, \frac{\cos\theta_j}{\sin\theta_j} \det(v_1,\ldots,v_{j-1},v_{j+1},\ldots,v_{\ell})\right|^{n-\ell+1}\\
&\qquad\times \prod\limits_{j=1}^\ell\big( (\cos\theta_j)^{r-1} (\sin\theta_j)^{n-1}\big)\, \cH^{\ell(\ell-2)}\big(\dint(v_1,\ldots,v_\ell)\big)\,\dint(\theta_1\ldots\theta_\ell)\,.
\end{split}
\end{equation}
Using H\"older's inequality, it follows for a  measurable function
$f:\SS^{\ell-1}\cap e_\ell^\bot\rightarrow \mathbb{R}$ that
\[
\left(\;\int\limits_{\SS^{\ell-1}\cap e_\ell^\bot} |f(v)|^{n-\ell+1}\,\cH^{\ell-2}(\dint v)\right)^{\frac{1}{n-\ell+1}} \geq
\omega_{\ell-1}^{-{\frac{n-\ell}{ n-\ell+1}}}
\int\limits_{\SS^{\ell-1}\cap e_\ell^\bot} |f(v)|\;\cH^{\ell-2}(\dint v)\,,
\]
which implies that
\begin{equation*}
\begin{split}
&2^\ell\int\limits_{(0,\frac{\pi}{2})^\ell}\int\limits_{(\SS^{\ell-1}\cap e_\ell^\bot)^\ell}
\left|\sum\limits_{j=1}^\ell (-1)^j\, \frac{\cos\theta_j}{\sin\theta_j} \det(v_1,\ldots,v_{j-1},v_{j+1},\ldots,v_{\ell})\right|^{n-\ell+1}\\
&\qquad\times \prod\limits_{j=1}^\ell\big( (\cos\theta_j)^{r-1} (\sin\theta_j)^{n-1}\big)\, \cH^{\ell(\ell-2)}\big(\dint(v_1,\ldots,v_\ell)\big)\,\dint(\theta_1\ldots\theta_\ell)\\
& =2^\ell\int\limits_{(0,\frac{\pi}{2})^\ell}\int\limits_{(\SS^{\ell-1}\cap e_\ell^\bot)^{\ell-1}} \left(\;\int\limits_{\SS^{\ell-1}\cap e_\ell^\bot}
\left|\sum\limits_{j=1}^\ell (-1)^j\, \frac{\cos\theta_j}{\sin\theta_j} \det(v_1,\ldots,v_{j-1},v_{j+1},\ldots,v_{\ell})\right|^{n-\ell+1} \cH^{\ell-2}(\dint v_1)\right)\\
&\qquad\times \prod\limits_{j=1}^\ell \big((\cos\theta_j)^{r-1} (\sin\theta_j)^{n-1}\big)\, \cH^{(\ell-1)(\ell-2)}\big(\dint(v_2,\ldots,v_\ell)\big)\,\dint(\theta_1\ldots\theta_\ell)\\
&\geq 2^\ell\int\limits_{(0,\frac{\pi}{2})^\ell}\int\limits_{(\SS^{\ell-1}\cap e_\ell^\bot)^{\ell-1}}
\omega_{\ell-1}^{-(n-\ell)} \\
&\qquad\times \left(\;\int\limits_{\SS^{\ell-1}\cap e_\ell^\bot}
\left|\sum\limits_{j=1}^\ell (-1)^j\, \frac{\cos\theta_j}{\sin\theta_j} \det(v_1,\ldots,v_{j-1},v_{j+1},\ldots,v_{\ell})\right|\cH^{\ell-2}(\dint v_1)\right)^{n-\ell+1}\\
&\qquad\times \prod\limits_{j=1}^\ell \big((\cos\theta_j)^{r-1} (\sin\theta_j)^{n-1}\big) \, \cH^{(\ell-1)(\ell-2)}\big(\dint(v_2,\ldots,v_\ell)\big)\,\dint(\theta_1\ldots\theta_\ell)\,.
\end{split}
\end{equation*}
This is made smaller by taking the inner integral into the absolute values, which yields
\begin{equation*}
\begin{split}
&\int\limits_{(\SS^{\ell-1})^\ell} \nabla_\ell(u_1,\ldots,u_\ell)^{n-\ell+1}\,\prod_{j=1}^\ell\left|\langle u_j,e_\ell\rangle\right|^{r-1}\,\cH^{\ell(\ell-1)}\big(\dint(u_1,\ldots,u_\ell)\big)\\
&\geq 2^\ell\int\limits_{(0,\frac{\pi}{2})^\ell}\int\limits_{(\SS^{\ell-1}\cap e_\ell^\bot)^{\ell-1}}
\omega_{\ell-1}^{-(n-\ell)} \left|
\sum\limits_{j=1}^\ell (-1)^j\, \frac{\cos\theta_j}{\sin\theta_j} \int\limits_{\SS^{\ell-1}\cap e_\ell^\bot} \det(v_1,\ldots,v_{j-1},v_{j+1},\ldots,v_{\ell})\,\cH^{\ell-2}(\dint v_1)\right|^{n-\ell+1}\\
&\qquad\times \prod\limits_{j=1}^\ell \big((\cos\theta_j)^{r-1} (\sin\theta_j)^{n-1}\big) \cH^{(\ell-1)(\ell-2)}\big(\dint(v_2,\ldots,v_\ell)\big)\,\dint(\theta_1\ldots\theta_\ell)\\
& =2^\ell\,\omega_{\ell-1} \left(\;\int\limits_{0}^{\frac{\pi}{2}}(\cos\theta_1)^{n-\ell+r}\,(\sin\theta_1)^{\ell-2}\,\dint\theta_1\right)\, \left(\int\limits_0^{\frac{\pi}{2}}(\cos\theta)^{r-1}\,(\sin\theta)^{n-1}\,\dint\theta\right)^{\ell-1}\\
&\qquad\times\int\limits_{(\SS^{\ell-1}\cap e_\ell^\bot)^{\ell-1}}\nabla_{\ell-1}(v_1,\ldots,v_{l-1})^{n-\ell+1}\,
\cH^{(\ell-1)(\ell-2)}\big(\dint(v_1,\ldots,v_{\ell-1})\big)\,,
\end{split}
\end{equation*}
since
$$\sum\limits_{j=1}^\ell (-1)^j\, \frac{\cos\theta_j}{\sin\theta_j} \int\limits_{\SS^{\ell-1}\cap e_\ell^\bot} \det(v_1,\ldots,v_{j-1},v_{j+1},\ldots,v_{\ell})\,\cH^{\ell-2}(\dint v_1)=-\frac{\cos\theta_1}{\sin\theta_1}\, \omega_{\ell-1} \det(v_2,\ldots,v_{\ell})\,.$$
The integrals in the first two brackets can be evaluated directly using \eqref{eqGamma}. They equal
$${\frac{\o_{n+r}}{\o_{n-\ell+r+1}\o_{\ell-1}}}\qquad{\rm and}\qquad\left({\frac{\o_{n+r}}{\o_r\o_{n}}}\right)^{\ell-1},$$
respectively. The remaining factor can be treated by means of the Blaschke-Petkanschin formula \cite[Theorem 7.2.3]{SW} (applied backwards with $d=n$, $p=q=\ell-1$, $r_1=\ldots=r_{\ell-1}=1$ and $f\equiv 1$ there). This gives
$$\int\limits_{(\SS^{\ell-1}\cap e_\ell^\bot)^{\ell-1}}\nabla_{\ell-1}(v_1,\ldots,v_{\ell-1})^{n-\ell+1}\,\cH^{(\ell-1)(\ell-2)}\big(\dint(v_1,\ldots,v_{\ell-1})\big)=\o_{n}^{\ell-1}\,{\frac{\o_1\ldots\o_{\ell-1}}{\o_{n-\ell+2}\ldots\o_n}}\,.$$
Now the lower estimate follows immediately.

To obtain an upper bound, we use the inequality
$$\sum\limits_{j=1}^\ell |x_j| \leq  \ell^{\frac{n-\ell}{n-\ell+1}}\left(\sum\limits_{j=1}^\ell |x_j|^{n-\ell+1}\right)^{\frac{1}{n-\ell+1}}\,,$$
valid for arbitrary real numbers $x_1,\ldots,x_\ell$ as a consequence of H\"older's inequality, to see that
\begin{align}
&\int\limits_{(0,\frac{\pi}{2})^\ell}\int\limits_{(\SS^{\ell-1}\cap e_\ell^\bot)^\ell}
\left|\sum\limits_{j=1}^\ell (-1)^j\, \frac{\cos\theta_j}{\sin\theta_j} \det(v_1,\ldots,v_{j-1},v_{j+1},\ldots,v_{\ell})\right|^{n-\ell+1}\nonumber\\
&\qquad\times \prod\limits_{j=1}^\ell\big( (\cos\theta_j)^{r-1} (\sin\theta_j)^{n-1}\big)\, \cH^{\ell(\ell-2)}\big(\dint(v_1,\ldots,v_\ell)\big)\, \dint(\theta_1\ldots\theta_\ell)\nonumber\\
&\leq \ell^{n-\ell}\,\int\limits_{(0,\frac{\pi}{2})^\ell} \int\limits_{(\SS^{\ell-1}\cap e_\ell^\bot)^\ell}
\sum\limits_{j=1}^\ell  \left|\frac{\cos\theta_j}{\sin\theta_j} \det(v_1,\ldots,v_{j-1},v_{j+1},\ldots,v_{\ell})\right|^{n-\ell+1}
\nonumber\\
&\qquad\times \prod\limits_{j=1}^\ell\big( (\cos\theta_j)^{r-1} (\sin\theta_j)^{n-1}\big)\, \cH^{\ell(\ell-2)}\big(\dint(v_1,\ldots,v_\ell)\big)\dint(\theta_1\ldots\theta_\ell)\,.\label{Eq:Upperbound}
\end{align}
To complete the proof of the upper bound, we combine \eqref{eq:ZwischschrittAsymptotik1} with \eqref{eq:ZwischenschrittAsymptotik2} and then, the inequality \eqref{Eq:Upperbound}
 eventually leads to the estimate $c_r(n,\ell)\leq\ell^{n-\ell+1}\,A(n,\ell,r)$, which completes the proof.\hfill $\Box$

\bigskip

{\em Proof of Corollary \ref{cor:EndabschaetzungFVektor}.}
The bounds for $\mathbb{E}f_{0}(Z_0)$ follow from Corollary \ref{VerticeNumberFormula} and the bounds for $c_r(n,n)$ in Proposition
\ref{Prop:ConstantEstimate} if $c_r(n)=r n! \omega_n^n c_r(n,n)$ is used.

For $\ell\in\{1,\ldots,n-1\}$ the bounds for
$\mathbb{E}f_{n-\ell}(Z_0)$ are obtained by substituting the bounds for $c_r(n,\ell)$ from Proposition \ref{Prop:ConstantEstimate}
into the bounds from Corollary \ref{cor:EstimateFVector} and by using the upper bound for  $\mathbb{E}f_{0}(Z_0)$.\hfill $\Box$

\subsection{Proof of Theorem \ref{thm:Skelett}}

Arguing as at the beginning of the proof of Theorem \ref{thm:ExpectFvector}, we see that
\[
\EE\mathcal{H}^{n-\ell} (\text{skel}_{n-\ell}( Z_0)) = \frac{1}{\ell!}\,\mathbb{E}\sum_{(H_1,\ldots,H_\ell)\in X^\ell_{\neq}}\mathcal{H}^{n-\ell}(Z_0\cap H_1\cap \ldots \cap H_\ell)\,.
\]
We can proceed as in the first part of the proof of Theorem \ref{thm:ExpectFvector} and obtain with $b$ as at \eqref{eq:DefConstantB} that
\begin{align*}
&\EE\mathcal{H}^{n-\ell}(\text{skel}_{n-\ell}( Z_0))\\
& =  b{\frac{1}{\ell!}}\left({\frac{\g}{\o_n}}\right)^\ell\EE\int\limits_{G(n,\ell)}\int\limits_{(\SS_L)^\ell} \int\limits_L\nabla_\ell(u_1,\ldots,u_\ell)^{n-\ell+1}\,\cH^{n-\ell}((u_1^\perp+z)\cap\ldots\cap(u_\ell^\perp+z)\cap Z_0)\\
&\hspace{3cm}\times\prod_{j=1}^\ell|\langle u_j,z\rangle|^{r-1}\,\cH^\ell(\dint z)\,\cH^{\ell(\ell-1)}\big(\dint(u_1,\ldots,u_\ell)\big)\,\nu_\ell(\dint L)\\
& =  b{\frac{1}{\ell!}}\left({\frac{\g}{\o_n}}\right)^\ell\EE\int\limits_{G(n,\ell)}\int\limits_{(\SS_L)^\ell}\int\limits_L\nabla_\ell(u_1,\ldots,u_\ell)^{n-\ell+1}\,\cH^{n-\ell}((z+L^\perp)\cap Z_0)\\
&\hspace{3cm}\times\prod_{j=1}^\ell|\langle u_j,z\rangle|^{r-1}\,\cH^\ell(\dint z)\,\cH^{\ell(\ell-1)}\big(\dint(u_1,\ldots,u_\ell)\big)\,\nu_\ell(\dint L).
\end{align*}
Next we argue as in the second part of the proof of Theorem \ref{thm:ExpectFvector}. Then we arrive at
\begin{align*}
&\EE\mathcal{H}^{n-\ell}(\text{skel}_{n-\ell}( Z_0))
 = r\,c_r(n,\ell) \, \gamma^\ell\,\EE \int\limits_E\|z\|^{\ell(r-1)}\, \cH^{n-\ell}\big((z+E^\perp)\cap Z_0\big) \,\cH^{\ell}(\dint z)\,.
\end{align*}
Using Fubini's Theorem, spherical coordinates, Lemma \ref{lem:ThetaStrecke0z} and \eqref{eqGamma}, we get
\begin{align*}
\EE\mathcal{H}^{n-\ell}(\text{skel}_{n-\ell}( Z_0))
& = r\, c_r(n,\ell) \,\gamma^\ell \, \int\limits_{E}\|z\|^{\ell(r-1)}\int\limits_{E^\perp} \PP(x+z\in Z_0) \, \dint x \, \dint z\\
& =r\,c_r(n,\ell) \,\gamma^\ell \, \o_\ell\o_{n-\ell} \int\limits_0^\infty \int\limits_0^\infty s^{\ell r-1}\,
 t^{n-\ell-1} \,\PP\big(\sqrt{s^2+t^2}\; e_1\in Z_0\big)\, \dint t\,\dint s\\
 & = r\,c_r(n,\ell) \,\gamma^\ell \,\o_\ell\o_{n-\ell} \int\limits_0^\infty e^{-{\frac{2\gamma}{  r}}{\frac{\o_{r+n}} {\o_n \o_{r+1}}}x^r}\,
x^{\ell r+n-\ell-1}\, \dint x \;\int\limits_0^{\pi/2}(\cos\theta )^{\ell r-1} (\sin\theta)^{n-\ell-1} \,\dint\theta\\
& = 2\,c_r(n,\ell) \,\gamma^{-{\frac{n-\ell}{ r}}} \, {\frac{\o_\ell \o_{\ell r+n-\ell}}{ \o_{\ell r} \o_{2(\ell+{\frac{n-\ell}{  r}})}}}
\left(r\,{\frac{\pi}{ 2}}{\frac{\o_n\o_{r+1} } {\o_{r+n}}}\right)^{\ell+{\frac{n-\ell}{ r}}}.
\end{align*}
The statement of the theorem is thus proved.\hfill $\Box$

\subsection{Proofs of the formulae for $r=1$}

\begin{proof}[Proof of Theorem \ref{thm:VereinheitnlichungSchneiderWieacker}]
For $\ell=0$ the relation is obvious. For $\ell\in\{1,\ldots,n\}$ we first use the multivariate Mecke formula \eqref{eq:Mecke} and then $\ell$-times Crofton's formula \cite[Theorem 5.1.1]{SW} to see that
\begin{align*}
\EE F_{n-\ell;j}(Z_0) &= {\frac{1}{ \ell!}}\,\EE\sum_{(H_1,\ldots,H_\ell)\in X_{\neq}^\ell}V_j(Z_0\cap H_1\cap\ldots\cap H_\ell)\\
&= {\frac{\g^\ell}{ \ell!}}\,\EE\int\limits_{A(n,n-1)^\ell}V_j(Z_0\cap H_1\cap\ldots\cap H_\ell)\,\mu_{n-1}^\ell\big(\dint(H_1\ldots\dint H_\ell)\big)\\
&= {\frac{\g^\ell}{ \ell!}}\,\left({\frac{\kappa_{n-1} } {\o_n}}\right)^\ell\,{\frac{(\ell+j)! } {j!}}\,{\frac{\kappa_{\ell+j}}{\kappa_j}}\,\EE V_{\ell+j}(Z_0)\,,
\end{align*}
where the measure $\mu_{n-1}$ has been defined in \eqref{eq:DefinitionMuEll}. This proves the claim.
\end{proof}

\begin{proof}[Proof of Corollary \ref{cor:VergleichfF}]
This follows by comparing the expression \eqref{eq:AllgemeineSchneiderformel} for $\EE f_{n-\ell-j}$ with that of $\EE F_{n-\ell;j}$ given by Theorem \ref{thm:VereinheitnlichungSchneiderWieacker}.
\end{proof}

\begin{proof}[Proof of Corollary \ref{cor:NiederdimensionaleSeitenF}]
By \eqref{eq:lSeiteSchnittdarstellung}, for fixed $L\in G(n,m)$ the $m$-dimensional random polyhedron $Z_0\cap L$ is almost surely the zero cell of $X\cap L$, the random tessellation induced by the intersection of $X$ with $L$. This sectional tessellation has intensity $\gamma_L$ given by
$$\gamma_L={\frac{\o_m\omega_{n+1}}{\omega_n\omega_{m+1}}} \gamma, %={\frac{\o_m\kappa_{n-1}}{ n\kappa_n\kappa_{m-1}}}
$$
independently of the subspace $L$, as a consequence of the rotation invariance of $X$, cf.\ \cite[Equation (3.29T)]{MilesFlats} or Proposition \ref{prop:IntensityMeasureSectionWithPlane} with $r=1$ there. Thus, applying Theorem \ref{thm:VereinheitnlichungSchneiderWieacker} to the zero cell $Z_0\cap L$ of $X\cap L$ and combining with \eqref{eq:lSeiteSchnittdarstellung}, we get
%\begin{equation}\label{eq:ZwischenSchrittNiederdimensionaleSeiten}
$$
\EE F_{m-j;i}(Z_m) =\gamma_L^j\,\binom{i+j}{ j}\,\left({\frac{\kappa_{m-1}}{\o_m}}\right)^j\,{\frac{\kappa_{i+j}}{\kappa_i}}\,\int\limits_{G(n,m)}\EE V_{i+j}(Z_0\cap L)\,\nu_m(\dint L)\,,
$$
%\end{equation}
which in view of \eqref{eq:lSeiteSchnittdarstellung} is -- after simplification of the constants -- the formula in Corollary \ref{cor:NiederdimensionaleSeitenF}.
\end{proof}

\begin{proof}[Proof of Theorem \ref{thm:HoehereMomente}]
Let us introduce the abbreviation $\psi(Z_m)={\frac{2\kappa_{n-1}} {\o_n}}V_1(Z_m)$ and apply Theorem 6.1 in \cite{BL} to deduce that the conditional distribution of $\psi(Z_m)$, given $f_{m-1}(Z_m)=i$ for some integer $i\geq m+1$, is a Gamma distribution with parameters $i$ and $\g$ and mean $\frac{i}{ \gamma}$. Thus, we get
$$\EE\big[\psi(Z_m)^k\,|\,f_{m-1}(Z_m)=i\big]=\g^{-k} \prod\limits_{q=0}^{k-1}(i+q),\qquad k\in \NN.$$
Consequently, writing $p_i$ for the probability that $f_{m-1}(Z_m)=i$, we conclude that
\begin{equation}\label{eq:ZwischenschrittHoehereMomente2}
\EE\big[\psi(Z_m)^k\big]=\sum_{i=m+1}^\infty p_i\,\g^{-k}\,\prod_{q=0}^{k-1}(i+q),\qquad k\in \NN.
\end{equation}
Now we observe that
$$\prod_{q=0}^{k-1}(i+q)=\sum_{q=1}^k \begin{bmatrix}k\\ q\end{bmatrix} i^q\,,$$
which in view of \eqref{eq:ZwischenschrittHoehereMomente2} implies that
$$\EE\psi(Z_m)^k=\g^{-k}\,\sum_{q=1}^k\begin{bmatrix}k\\ q\end{bmatrix}\,\sum_{i=m+1}^\infty p_i\,i^q=\g^{-k}\,\sum_{q=1}^k\begin{bmatrix}k\\ q\end{bmatrix}\,\EE f_{m-1}^q(Z_m)\,.$$
Substituting finally the expression for $\psi(Z_m)$, we complete the proof.
\end{proof}

\subsection{Proof of Proposition \ref{prop:IntensityMeasureSectionWithPlane}}

By definition of $X\cap L$ we have
\begin{align*}
& \Theta_{ L}(\,\cdot\,) = \frac{\gamma}{\o_n} \int\limits_{\SS^{n-1}}\int\limits_{\RR} \mathbf{1}\{H(u,t)\cap L \in \,\cdot\,\}\, |t|^{r-1} \, \dint t\, \cH^{n-1}(\dint u)\,.
\end{align*}
The map
\[
F:\begin{cases}
\SS_L\times(0,\frac{\pi}{2})\times\SS_{L^\perp} &\to\quad \SS^{n-1}\\
(u_1,\theta,u_2)&\mapsto\quad \cos(\theta) u_1 + \sin(\theta)u_2
\end{cases}
\]
is injective and its image covers $\SS^{n-1}$ up to a set of measure zero. Its Jacobian is
\[JF(u_1,\theta,u_2)= (\cos \theta)^{m-1} (\sin \theta)^{n-m-1}.\]
Thus,
\begin{align*}
\Theta_{ L}(\,\cdot\,)
& = \frac{\gamma}{\o_n} \int\limits_{\SS_{L^\bot}}\int\limits_{\SS_L} \int\limits_{0}^{\frac{\pi}{2}} \int\limits_{\RR} \mathbf{1}\big(H(\cos(\theta)u_1 + \sin(\theta)u_2,t)\cap L \in \,\cdot\,\big)\, |t|^{r-1} \, \dint t\\
& \qquad\qquad \times (\cos\theta)^{m-1}
(\sin\theta)^{n-m-1}\,\dint\theta\, \cH^{m-1}(\dint u_1)\, \cH^{n-m-1}(\dint u_2)\,.
\end{align*}
Moreover, a short computation shows that
\[
H(\cos(\theta)u_1 + \sin(\theta)u_2,t)\cap L = \left(\frac{t}{\cos(\theta)}\right)u_1 + (u_1^\bot \cap L)\,,
\]
which implies together with \eqref{eqGamma} that
\begin{align*}
\Theta_{ L}(\,\cdot\,)
& = \frac{\gamma}{\o_n} \int\limits_{\SS_{L^\bot}}\int\limits_{\SS_{L}} \int\limits_0^{\frac{\pi}{2}} \int\limits_{\RR} \mathbf{1}\left(\left(\frac{t}{\cos(\theta)}\right)u_1 + (u_1^\bot \cap L) \in \,\cdot\,\right)\, |t|^{r-1}\, \dint t\\
& \qquad\qquad \times (\cos\theta)^{m-1}(\sin\theta)^{n-m-1}\,\dint\theta\, \cH^{m-1}(\dint u_1)\, \cH^{n-m-1}(\dint u_2)\\
& =  \frac{\gamma}{\o_n} \int\limits_{\SS_{L^\bot}}\int\limits_{\SS_{L}} \int\limits_0^{\frac{\pi}{2}} \int\limits_{\RR} \mathbf{1}\big(t u_1 + (u_1^\bot \cap L) \in \,\cdot\,\big)\, |t|^{r-1} \,\dint t\\
& \qquad\qquad \times (\cos\theta)^{m+r-1}(\sin\theta)^{n-m-1} \,\dint\theta\, \cH^{m-1}(\dint u_1)\, \cH^{n-m-1}(\dint u_2)\\
& = \frac{\gamma}{\o_n} \frac{\o_{n+r}}{\o_{m+r}}
\int\limits_{\SS_{L}} \int\limits_{\RR} \mathbf{1}\big(t u + (u^\bot \cap L) \in \,\cdot\,\big)\, |t|^{r-1} \, \dint t\,  \cH^{m-1}(\dint u)\,.
\end{align*}
This completes the proof. \hfill $\Box$

\subsection{Proofs related to Section \ref{subsec:asymptotikundhyperplane}}

\begin{proof}[Proof of Corollary \ref{cor:AsymptotikVolumeSection}]
Stirling's formula states that
\begin{equation}\label{eq:Stirling}\Gamma(x)=\sqrt{\frac{2\pi} {x}}\left({\frac{x}{ e}}\right)^x\,e^{\frac{\lambda(x)}{12x}}\end{equation}
with $\lambda(x)\in(0,1)$ for all $x>0$ (see \cite[Equation (12.33)]{WW} or \cite[p.\ 24]{Artin1}). For two expressions $A(n), B(n)$, depending on $n$, we write $A(n)\sim B(n)$ as $n\to\infty$ if $\lim\limits_{n\to\infty}\frac{A(n)}{B(n)}=1.$
The limiting relation for $\EE V_{n-\ell}(Z_0\cap L)$ follows from Proposition \ref{prop:BoundsMomentsVolmeSections} by choosing the intensity as $\widehat{\gamma}(r,n)$. For $r=b\, n^\alpha$ with $b>0$ and $\alpha\in\RR$, we apply Equation \eqref{eq:Stirling} and use for $\alpha\geq 1$ the continuity of the gamma function. For fixed $\ell\in \N$ and $L\in G(n,n-\ell)$, as $n\to\infty$ we obtain
\begin{align*}
\EE V_{n-\ell}(Z_0\cap L)&=\frac{\G(\frac{n-\ell}{r}+1)}{\G(\frac{n-\ell}{2}+1)}
\left(\frac{\G(\frac{n}{2}+1)}{\G(\frac{n}{r}+1)}\right)^{\frac{n-\ell}{n}}\sim
\bigg(\Big(1-\frac{\ell}{n}\Big)^{(n-\ell)}\bigg)^{\frac{1}{r}}
\bigg(1-\frac{\ell}{n}\bigg)^{-\frac{n-\ell}{2}}\\
&
\to\begin{cases} 0&: \alpha<0\\
e^{-\frac{\ell}{b}+\frac{\ell}{2}}&:\alpha=0\\
e^{\ell/2}&:\alpha>0\,.\\
\end{cases}
\end{align*}
 To see the variance bound, we analyse the behaviour of the quantities $D(n,n-\ell,r)$ and $E(n-\ell,r)$ occurring in Proposition \ref{prop:BoundsVarianceSections} with $m=n-\ell$.

 To the constant $D(n,n-\ell,r)$ defined in  \eqref{AuxiliaryConstantD(n,m,r)} we apply Stirling's formula and for $\alpha\geq 1$ we use the continuity of the gamma function on $(0,\infty)$. Thus, for fixed $\ell\in\N$ and $L\in G(n,n-\ell)$, as $n\to\infty$ we deduce  that
\begin{align*}
&D(n,n-\ell,r)= \frac{n-\ell}{r}\left(\frac{\Gamma(\frac{n}{2}+1)}
{\Gamma(\frac{n}{r}+1)}\right)^{\frac{2(n-\ell)}{n}} \frac{\G(\frac{2(n-\ell)}{r}+1)}{(\G(\frac{n-\ell}{2}+1))^2}\; 2^{-\frac{2(n-\ell)}{r}}\\[0.3cm]
%
%&\\
%
&\sim\begin{cases} \frac{1}{\sqrt{\pi}}\frac{\sqrt{n}}{\sqrt{r}} (1-\frac{\ell}{n})^{-(n-\ell)}\left(\left( 1-\frac{\ell}{n}\right)^{2(n-\ell)}\right)^{\frac{1}{r}}&: \alpha<1\\[0.3cm]
\left( 1-\frac{\ell}{n}\right)^{-(n-\ell)}\,\left( \frac{n-\ell}{r}\right)\,2^{-2\frac{(n-\ell)}{r}}\, \frac{\G(\frac{2(n-\ell)}{r}+1)} {(\G(\frac{n}{r}+1))^{\frac{2(n-\ell)}{n}}}&: \alpha\geq 1\\
\end{cases}\\[0.3cm]
%
%&\\
%
&\quad\,\begin{cases}
\in \left[\left(e^{-\frac{2}{b}}-\epsilon\right)^{\ell\,n^{-\alpha}},
\left(e^{-\frac{2}{b}}+\epsilon\right)^{\ell\,n^{-\alpha}}\right]&: \alpha<0\text{ and } \ell>0\vspace{0.2cm}\\
\sim c_1\,n^{\frac{1-\alpha}{2}}&:\alpha<0 \text{ and }\ell=0\\ & \,\text{ or }0\leq \alpha<1\\
\to e^\ell \,\frac{\G(\frac{2}{b})}{(\G(\frac{1}{b}))^2}\,2^{1-\frac{2}{b}}&: \alpha=1 \vspace{0.2cm}\\
\sim c_2\, n^{1-\alpha}&:\alpha>1\,,
\end{cases}
\end{align*}
where $c_1,c_2>0$ and $\epsilon\in \left(0,e^{-\frac{2}{b}}\right)$ are constants not depending on $n$ and $n\geq N_{\e}$, for some $N_{\e}\in \N$.
From inequality \eqref{BoundForE} we obtain
\begin{align*}
c \left( 1+\frac{r}{n-\ell}\right)^{-\frac{1}{2}}F(n-\ell,r)\leq E(n-\ell,r)\leq C\left( 1+\frac{r}{n-\ell}\right)F(n-\ell,r),
\end{align*}
with constants $c, C>0$, independent of $r$ and $n$, and
\begin{align*}
&F(n-\ell,r)=\frac{1}{\sqrt{r+1}}\,2^{\frac{n-\ell}{2}}\, \left(1+\frac{r}{2(n-\ell)}\right)^{-\frac{n-\ell}{2}}
\left(1+\frac{n-\ell}{n-\ell+r}\right)^{-\frac{n-\ell+r}{2}}\\[0.5cm]
&\begin{cases}
=\frac{1}{\sqrt{r+1}}\left(\frac{1}{2}\left( 1+\frac{1}{2\frac{n-\ell}{r}}\right)^{-\frac{n-\ell}{r}} \left( 1-\frac{1/2}{1+\frac{n-\ell}{r}}\right)^{-1-\frac{n-\ell}{r}}\right)^{\frac{r}{2}}&:\alpha\leq 1\\[0.4cm]
\sim c_3\; \frac{1}{\sqrt{r}}
\left(\frac{n}{r}\;4 \;\left( 1+\frac{2 (n-\ell)}{r}\right)^{-1} \left( 1+\frac{1}{1+\frac{r}{n-\ell}}\right)^{-1-\frac{r}{n-\ell}} \right)^{\frac{n-\ell}{2}} &:\alpha>1
\end{cases}\\[0.3cm]
&\begin{cases} \sim c_4 n^{-\frac{\alpha}{2}}&:\alpha<0\\
\to \left((b+1)\,2^b\right)^{-\frac{1}{2}} &:\alpha =0\\
\in \left[(2^{-\frac{b}{2}}-\epsilon)^{n^\alpha} ,(2^{-\frac{b}{2}}+\epsilon)^{n^\alpha} \right] &: 0<\alpha<1\\
\sim c_5\,\frac{1}{\sqrt{n}}\left( \frac{4(b+1)^{b+1}}{(b+2)^{b+2}}\right)^{\frac{n}{2}}&:\alpha=1\\[0.2cm]
\in \left[\left(n^{1-\alpha}\frac{4}{be}-\epsilon\right)^{\frac{n}{2}}
,\left(n^{1-\alpha}\frac{4}{be}+\epsilon\right)^{\frac{n}{2}}\right] \hspace{2.7cm}&:\alpha>1
\end{cases}
\end{align*}
for constants $c_3,c_4,c_5>0$ and $\epsilon\in (0, 2^{-\frac{b}{2}})$ not depending on $n$ and $n\geq N_{\e}$, for some $N_{\e}\in \N$.
The limiting behaviour of the additional factor in the upper bound of Proposition \ref{prop:BoundsVarianceSections} is
\begin{align*}
4^{\frac{2(n-\ell)}{r}+1}\,
\begin{cases}\sim \left( 4^{\frac{2}{b}}\right)^{n^{1-\alpha}}&:\alpha<1\\
\to 4^{\frac{2}{b}+1}&:\alpha=1\\
\to 4&: \alpha>1.\end{cases}
\end{align*}
For the behaviour of $\var(V_{n-\ell}(Z_0\cap L))$ we thus obtain

\begin{align*}
\var(V_{n-\ell}(Z_0\cap L))&\,
\begin{cases}
\in\left[ c_6\, n^{\frac{1}{2}-\a},c_7\, n^{\frac{1}{2}-\a}\left( 4^{\frac{2}{b}}\right)^{n^{1-\a}}\right]
&:\alpha<0 \text{ and } \ell =0\\[0.5cm]
\in \left[ \left( e^{-\frac{2}{b}}-\e\right)^{\ell\, n^{-\a}}, \left( 4^{\frac{2}{b}}\right)^{n^{1-\a}}\right]&:\alpha<0 \text{ and }\ell>0 \\[0.5cm]
\in \left[c_8 \sqrt{n},c_9\sqrt{n}\left( 4^{\frac{2}{b}}\right)^n\right]&:\alpha=0 \\[0.5cm]
\in\left[ \left( 2^{-\frac{b}{2}}-\e\right)^{n^\a},\left(2^{-\frac{b}{2}}+\e\right)^{n^\a}\left( 4^{\frac{2}{b}}\right)^{n^{1-\a}}\right]&:0<\alpha<\frac{1}{2} \\[0.5cm]
\in\left[\left(2^{-\frac{b}{2}}-\e\right)^{\sqrt{n}},
\left(2^{\frac{4}{b}-\frac{b}{2}}+\e\right)^{\sqrt{n}}\right]&:\alpha=\frac{1}{2} \\[0.5cm]
\in\left[\left(2^{-\frac{b}{2}}-\e\right)^{n^\a},
\left(2^{-\frac{b}{2}}+\e\right)^{n^\a}\right]&:\frac{1}{2}<\alpha<1  \\[0.5cm]
\sim c_{10}\,\frac{1}{\sqrt{n}}\left( \frac{4(b+1)^{b+1}}{(b+2)^{b+2}}\right)^{\frac{n}{2}} &:\alpha=1 \\[0.5cm]
\in \left[ \left( n^{1-\a} \frac{4}{be}-\e\right)^{\frac{n}{2}}, \left( n^{1-\a} \frac{4}{be}+\e\right)^{\frac{n}{2}}\right]&:\alpha>1 \\
\end{cases}
\end{align*}
 for constants $c_6,c_7,c_8,c_9,c_{10}>0$ and $\e\in \left(0,e^{-\frac{2}{b}}\right)$ not depending on $n$ and $n\geq N_{\e}$, for some $N_{\e}\in \N$.
 Therefore, if $\alpha<0$ and $l=0$, or if $\a=0$, then   $\var(V_{n-\ell}(Z_0\cap L))$ goes to infinity as $n$ goes to infinity. Observe that
 $\frac{4}{b}-\frac{b}{2}\geq 0$ is equivalent to $b\leq \sqrt{8}$.
Thus, the limiting behaviour of $\var(V_{n-\ell}(Z_0\cap L))$ remains open if $\alpha<0$ and $\ell>0$, or if $\a\in \left(0,\frac{1}{2}\right)$, or if $\alpha=\frac{1}{2}$ and $b\in (0,\sqrt{8}]$.
Finally, $\var(V_{n-\ell}(Z_0\cap L))$  converges to zero as $n$ goes to infinity if $\a=\frac{1}{2}$ and $b>\sqrt{8}$, or if $\a>\frac{1}{2}$.
Together this yields the assertion.
\end{proof}

\begin{proof}[Proof of Theorem \ref{thm:HyperplaneConjecture}]
By our choice of the intensity we have $\mathbb{E}V_n(Z_0) = 1$ and by Corollary \ref{cor:AsymptotikVolumeSection} it holds that $\lim_{n\rightarrow \infty}\mathbb{E}V_{n-1}(Z_0 \cap L)= \sqrt{e}$.
Hence, for any $\varepsilon\in(0,\sqrt{e})$ there exists an $N_\varepsilon \in \mathbb{N}$ such that for space dimensions $n \geq N_\varepsilon$ we have
\[
\mathbb{E}V_{n-1}(Z_0 \cap L) \in \left(\sqrt{e}-\frac{\varepsilon}{4},\sqrt{e}+\frac{\varepsilon}{4}\right)\,.
\]
For such $n \geq N_\varepsilon$ we can write
\begin{align*}
& \mathbb{P}\left(V_{n-1}(\overline{Z}_0 \cap L) > \sqrt{e}-\varepsilon\right)
\geq \mathbb{P}\left( V_n(Z_0) < 1 + \frac{\varepsilon}{2(\sqrt{e}-\varepsilon)} \text{ and } V_{n-1}(\overline{Z}_0 \cap L) > \sqrt{e}-\varepsilon\right)\\
& = \mathbb{P}\left( V_n(Z_0) < 1 + \frac{\varepsilon}{2(\sqrt{e}-\varepsilon)} \text{ and } V_{n-1}(Z_0 \cap L) > (\sqrt{e}-\varepsilon)V_n(Z_0)^{\frac{n-1}{n}}\right)\\
& \geq \mathbb{P}\left( V_n(Z_0) < 1 + \frac{\varepsilon}{2(\sqrt{e}-\varepsilon)} \text{ and } V_{n-1}(Z_0 \cap L) > (\sqrt{e}-\varepsilon)\left(1+\frac{\varepsilon}{2(\sqrt{e}-\varepsilon)}\right)^{\frac{n-1}{n}}\right)\\
& \geq \mathbb{P}\left( V_n(Z_0) < 1 + \frac{\varepsilon}{2(\sqrt{e}-\varepsilon)} \text{ and } V_{n-1}(Z_0 \cap L) > \sqrt{e}-\frac{\varepsilon}{2}\right)\,.
\end{align*}
Considering now the complement of the event in the last line and using the Chebychev inequality, we obtain
\begin{align*}
&\mathbb{P}\left(V_{n-1}\left(\overline{Z}_0 \cap L\right) > \sqrt{e}-\varepsilon\right)\\
& \geq 1 - \mathbb{P}\left(|V_n(Z_0) - \mathbb{E}V_n(Z_0)|\geq \frac{\varepsilon}{2(\sqrt{e}-\varepsilon)}\right) - \mathbb{P}\left(|V_{n-1}(Z_0 \cap L) - \mathbb{E}V_{n-1}(Z_0 \cap L)| \geq \frac{\varepsilon}{4} \right)\\
& \geq 1 - \varepsilon^{-2}\left(4(\sqrt{e}-\varepsilon)^2\,\var(V_n(Z_0))-16\, \var(V_{n-1}(Z_0 \cap L))\right)\,.
\end{align*}
The variance estimate from Corollary \ref{cor:AsymptotikVolumeSection} then yields an upper bound for $\mathbb{P}\big(V_{n-1}(\overline{Z}_0 \cap L) > \sqrt{e}-\varepsilon\big)$ and the limiting relation follows directly from this upper bound.
\end{proof}

\subsection{Proofs for Section \ref{subsec:HighDimensions}}

\begin{proof}[Proof of Theorems \ref{thm:IsoperimetricRatio}]

Theorem \ref{thm:Skelett} with $\ell=1$ provides an expression for
$$\EE V_{n-1}(Z_0)=\frac{1}{2}\EE\cH^{n-1}({\rm skel}_{n-1}(Z_0))$$
and the mean volume of $Z_0$ is given by \eqref{eq:MeanVolumeZ0}. The result then follows by an application of Stirling's formula.
\end{proof}

\begin{proof}[Proof of Theorem \ref{thm:HighDimensionsVertices}]

From Corollary \ref{cor:EstimateFVector} we obtain for any fixed $\ell\in\N_0$ and $n>l$ that
\begin{equation}\label{eqa}
\sqrt[n]{\mathbb{E} f_\ell(Z_0)}\le  n^{\frac{\ell}{n}}\sqrt[n]{\mathbb{E} f_0(Z_0)}\,.
\end{equation}
To derive a lower bound, we use the lower bound from \eqref{fnml} in Corollary \ref{cor:EndabschaetzungFVektor} to get
$$
\mathbb{E}f_\ell(Z_0)\ge \frac{\kappa_r}{n-\ell}\frac{\omega_{\ell+1}}{\omega_{\ell+1+r}}
\left(\frac{\omega_{r+1}}{\kappa_r}\right)^{n-\ell}\,,
$$
which holds for all $\ell\in\N_0$ and $n>\ell$. Hence,
$$
\liminf\limits_{n\to \infty}\sqrt[n]{\mathbb{E}f_\ell(Z_0)}\ge \liminf\limits_{n\to \infty}\left\{\left(\frac{\kappa_r}{\omega_{\ell+1+r}}\right)^{\frac{1}{n}}
\left(\frac{\omega_{r+1}}{\kappa_{r}}\right)^{1-\frac{\ell}{n}}\right\}\,.
$$
Subsequently, we use that
$$
\left(\frac{\kappa_r}{\omega_{\ell+1+r}}\right)^{\frac{1}{n}}\sim \left(\frac{\Gamma\left(\frac{\ell+1+r}{2}\right)}{\Gamma\left(\frac{r+2}{2}\right)}\right)^{\frac{1}{n}}
\qquad \text{and}\qquad
\frac{\omega_{r+1}}{\kappa_{r}}=2\sqrt{\pi}\,\frac{\Gamma\left(\frac{r+2}{2}\right)}{\Gamma\left(\frac{r+1}{2}\right)}\,.
$$
We distinguish three cases.

\medskip

\noindent
(i) Let $\alpha<0$. Then $r=b\,n^\alpha\to 0$ as $n\to\infty$, and \eqref{fnull} in Corollary \ref{cor:EndabschaetzungFVektor} implies that
\begin{equation}\label{eqb}
\lim\limits_{n\to\infty}\sqrt[n]{\mathbb{E}f_0(Z_0)}=\lim_{n\to\infty}\left(\frac{\omega_{r+1}}{\kappa_r}\right)=\frac{\omega_1}{\kappa_0}=2\,.
\end{equation}
Since
\begin{equation}\label{eqc}
\liminf\limits_{n\to\infty}\sqrt[n]{\mathbb{E}f_\ell(Z_0)}\ge \liminf\limits_{n\to\infty}\left(\frac{2\sqrt{\pi}}{\Gamma\left(\frac{1}{2}\right)}\right)^{1-\frac{\ell}{n}}=2\,,
\end{equation}
the assertion follows from \eqref{eqa}, \eqref{eqb} and \eqref{eqc}.

\medskip

\noindent
(ii) Let $\alpha=0$. Then $r=b$ is independent of $n$ and \eqref{fnull} in Corollary \ref{cor:EndabschaetzungFVektor} yields

\begin{equation}\label{eqd}\lim\limits_{n\to\infty}\sqrt[n]{\EE f_0(Z_0)}=\frac{\o_{b+1}}{\k_b}\,.
\end{equation}
In this case, we conclude that
\begin{equation}\label{eqe}\liminf\limits_{n\to\infty}\sqrt[n]{\EE f_\ell(Z_0)}\geq \liminf\limits_{n\to\infty} \left( \frac{\o_{b+1}}{\k_b}\right)^{1-\frac{\ell}{n}}=\frac{\o_{b+1}}{\k_b}\,,
\end{equation}
hence the assertion follows from \eqref{eqa}, \eqref{eqd} and \eqref{eqe}.

\medskip

\noindent
(iii) Let $\alpha>0$. Then $r=b\,n^\alpha\to\infty$ as $n\to\infty$. We use Stirling's formula to get
\begin{align}
\lim\limits_{n\to\infty} n^{-\frac{\alpha}{2}} \sqrt[n]{\EE f_0(Z_0)}
&%=\lim\limits_{n\to\infty} n^{-\frac{\alpha}{2}} \sqrt[n]{\left( \frac{\o_{r+1}}{\k_r}\right)^{n-1}}
 = \lim\limits_{n\to\infty}n^{-\frac{\alpha}{2}}\left(2 \sqrt{\pi} \frac{\G(\frac{r+2}{2})}{\G(\frac{r+1}{2})}\right)^{\frac{n-1}{n}}
 \nonumber\\
%
%&=\lim\limits_{n\to\infty}n^{-\frac{\alpha}{2}}\left(2\sqrt{\pi} \frac{\sqrt{2\pi}}{\sqrt{\frac{r+2}{2}}} %\frac{\sqrt{\frac{r+1}{2}}}{\sqrt{2\pi}} \frac{\left( \frac{r+2}{2e}\right)^{\frac{r+2}{2}}}{\left( \frac{r+1}{2e}\right)^{\frac{r+1}{2}}} %\right)^{\frac{n-1}{n}}\nonumber\\
%
&= \lim\limits_{n\to\infty}n^{-\frac{\alpha}{2}}\left( \frac{\sqrt{2\pi}}{\sqrt{e}} \sqrt{r+1} \left( \frac{r+2}{r+1}\right)^{\frac{r+1}{2}}\right)^{\frac{n-1}{n}}\nonumber\\
& = \sqrt{2\pi}\lim\limits_{n\to\infty}\left( \frac{\sqrt{b}}{\sqrt{e}}\left( 1+\frac{1}{r+1}\right)^{\frac{r+1}{2}}\right)^{\frac{n-1}{n}}
= \sqrt{2\pi b}\,.\label{eqf}
\end{align}

Repeating the preceding calculations and using again Stirling's formula, we get
\begin{align}
\liminf\limits_{n\to\infty}n^{-\frac{\a}{2}}\sqrt[n]{\EE f_\ell(Z_0)}
&\geq \liminf\limits_{n\to\infty}n^{-\frac{\a}{2}}\left( \frac{\Gamma(\frac{\ell+1+r}{2})}{\frac{r+2}{2}}\right)^{\frac{1}{n}}\left( 2\sqrt{\pi}\frac{\Gamma(\frac{r+2}{2})}{\Gamma(\frac{r+1}{2})}\right)^{1-\frac{\ell}{n}}\nonumber\\
&= \liminf\limits_{n\to\infty}\left( \frac{\Gamma(\frac{\ell+1+r}{2})}{\Gamma(\frac{r+2}{2})}\right)^{\frac{1}{n}}\sqrt{2\pi b}\nonumber\\
&=\liminf\limits_{n\to\infty}\left( \frac{\sqrt{\frac{r+2}{2}}}{\sqrt{\frac{\ell+1+r}{2}}}\right)^{\frac{1}{n}} \frac{\left( \frac{\ell+1+r}{2e}\right)^{\frac{\ell+1+r}{2n}}}{\left( \frac{r+2}{2e}\right)^{\frac{r+2}{2n}}}\sqrt{2\pi b}\nonumber\\
&=\liminf\limits_{n\to\infty}\frac{(\ell+1+r)^{\frac{\ell+1+r}{2n}}}{(r+2)^{\frac{r+2}{2n}}} \sqrt{2\pi b}\nonumber\\
&=\liminf\limits_{n\to\infty}\left(1+\frac{\ell-1}{r+2}\right)^{\frac{r+2}{2n}}\left(\ell+1+r\right)^{\frac{\ell-1}{2n}}\sqrt{2\pi b}
=\sqrt{2\pi b}\,.\label{eqg}
\end{align}
Hence, the assertion is implied by \eqref{eqa}, \eqref{eqf} and \eqref{eqg}.
\end{proof}

\begin{proof}[Proof of Theorem \ref{thm:HighDimensionRfest}]

(i) If $\a\leq 0$, then Corollary \ref{cor:EstimateFVector} and Theorem \ref{thm:HighDimensionsVertices} imply that
\[\limsup\limits_{n\to\infty}\sqrt[n]{\EE f_{n-\ell}(Z_0)}\leq \limsup\limits_{n\to\infty}\sqrt[n]{\EE f_0(Z_0)}
=\begin{cases}2& : \a<0\\ \frac{\o_{b+1}}{\k_b}& : \a=0\,.\end{cases}\]

\noindent
(ii) -- (iv) Let $\a>0$. Then $r=b\, n^\alpha\to\infty$ as $n\to\infty$ and the upper bound follows again from  Corollary \ref{cor:EstimateFVector} and from Theorem \ref{thm:HighDimensionsVertices}. Let $\ell\in\N$ be fixed. For the lower bounds, we use \eqref{fnml} in Corollary \ref{cor:EndabschaetzungFVektor} to get for an arbitrary $\beta>0$ (which will be specified later) that
\begin{align*}
\liminf\limits_{n\to\infty}n^{-\frac{\beta}{2}}\sqrt[n]{\EE f_{n-\ell}(Z_0)}&\geq \liminf\limits_{n\to\infty}n^{-\frac{\beta}{2}}\left(\k_r \frac{\o_{n-\ell+1}}{\o_{n-\ell+1+r}} \left( \frac{\o_{r+1}}{\k_r}\right)^\ell \right)^{\frac{1}{n}}\\
&=\liminf\limits_{n\to\infty}n^{-\frac{\beta}{2}}\left(\frac{\Gamma(\frac{n-\ell+1+r}{2})} {\Gamma(\frac{r+2}{2}) \Gamma(\frac{n-\ell+1}{2})} \right)^{\frac{1}{n}} \left( 2\sqrt{\pi}\frac{\Gamma(\frac{r+2}{2})}{\Gamma(\frac{r+1}{2})}\right)^{\frac{\ell}{n}}\\
&=\liminf\limits_{n\to\infty}n^{-\frac{\beta}{2}}\frac{(n-\ell+1+r)^{\frac{n+r}{2n}}}
{(r+2)^{\frac{r}{2n}} (n-\ell+1)^{\frac{1}{2}}},
\end{align*}
where we used Stirling's formula and basic asymptotic relations as before.
Proceeding from this, we get
\begin{align*}
\liminf\limits_{n\to\infty}n^{-\frac{\beta}{2}}\sqrt[n]{\EE f_{n-\ell}(Z_0)}
&\geq \liminf\limits_{n\to\infty}n^{-\frac{\beta}{2}} \frac{(n+r)^{\frac{n+r}{2n}}}{r^{\frac{r}{2n}}\sqrt{n}}
 =\liminf\limits_{n\to\infty}n^{-\frac{\beta}{2}} \frac{\sqrt{n+r}}{\sqrt{n}}\left( \frac{n+r}{r}\right)^{\frac{r}{2n}}\\
&= \liminf\limits_{n\to\infty}n^{-\frac{\beta}{2}}\sqrt{1+\frac{r}{n}}\left(1+\frac{n}{r}\right)^{\frac{r}{2n}}.
\end{align*}
If $\alpha\in (0,1)$,  we choose $\beta=0$ and have
\[\liminf\limits_{n\to\infty}\sqrt[n]{\EE f_{n-\ell}(Z_0)}
\geq \liminf\limits_{n\to\infty}\sqrt{1+b n^{\alpha-1}} \left( 1+\frac{n^{1-\alpha}}{b}\right)^{\frac{b}{2 n^{1-\alpha}}}=1.\]
If $\alpha=1$, we choose $\beta=0$ and conclude
\begin{align*}
\liminf\limits_{n\to\infty}\sqrt[n]{\EE f_{n-\ell}(Z_0)}
\geq \sqrt{1+b}\left( 1+\frac{1}{b}\right)^{\frac{b}{2}}.
\end{align*}
If $\alpha>1$,  we choose $\beta=\alpha-1$ and get
\begin{align*}
\liminf\limits_{n\to\infty}n^{\frac{1-\a}{2}}\sqrt[n]{\EE f_{n-\ell}(Z_0)}
&\geq \liminf\limits_{n\to\infty}n^{\frac{1-\a}{2}}\sqrt{1+b n^{\alpha-1}} \left(1+\frac{1}{bn^{\alpha-1}}\right)^{\frac{bn^{\a-1}}{2}}\\
&= \liminf\limits_{n\to\infty}\sqrt{n^{1-\a}+b}\sqrt{e}=\sqrt{be},
\end{align*}
which proves the theorem in all cases.
\end{proof}

\begin{proof}[Proof of Theorem \ref{thm:HighDimensionsR=bn}]
We omit the proof, since it is similar to the arguments for the preceding two theorems.
\end{proof}

\subsection*{Acknowledgement}
The authors would like to thank an anonymous referee for his useful comments which helped to improve the manuscript.
JH and DH have been supported by the German Research Foundation via the Research Group ``Geometry and Physics of Spatial Random Systems''. CT has been supported by the German Research Foundation (DFG) via SFB-TR 12 ``Symmetries and Universality in Mesoscopic Systems''.

\end{document}